\documentclass{amsart}

\usepackage{amsmath, amsthm, amssymb, amsfonts}
\usepackage[dvips]{graphicx}
\usepackage{overpic}
\usepackage{enumitem}
\usepackage[colorlinks=true]{hyperref}

\renewcommand{\emph}{\textbf}
\usepackage[margin=3cm]{geometry}

\newtheorem{theorem}{Theorem}[section]
\newtheorem{corollary}[theorem]{Corollary}
\newtheorem{proposition}[theorem]{Proposition}
\newtheorem{definition}[theorem]{Definition}
\newtheorem{lemma}[theorem]{Lemma}

\newtheorem{remark}[theorem]{Remark}

\newtheorem*{theorem*}{Theorem}
\newtheorem*{proposition*}{Proposition}
\newtheorem*{definition*}{Definition}
\newtheorem*{lemma*}{Lemma}
\newtheorem*{claim*}{Claim}
\newtheorem*{corollary*}{Corollary}
\newtheorem*{convention*}{Convention}

\theoremstyle{definition}

\newtheorem{construction}[theorem]{Construction}

\theoremstyle{remark}

\newtheorem*{rem*}{Remark}

\DeclareMathOperator{\ind}{ind}
\DeclareMathOperator{\MCG}{MCG}

\DeclareMathOperator{\PMod}{PMod}

\title{New Anosov flows via bicontact structures}
\author{Tali Pinsky}
\address{The Technion, Haifa and Monash University, Melbourne}
\email{talipi@technion.ac.il}
\author{Federico Salmoiraghi}
\address{Saint Louis University, Saint Louis}
\email{federico.salmoiraghi@gmail.com}
\thanks{The authors were supported by the Israel Science Foundation (grant No. 51/4051).}
\date{\today}

\begin{document}

\begin{abstract}

We present a new approach to hyperbolic plugs, via a construction of bicontact plugs on 3-manifolds with boundary that are surface bundles over the circle. The boundary components are quasi transverse tori, and we prove a gluing theorem that allows us to produce closed manifolds carrying new transitive Anosov flows. We show that a toroidal manifold produced by gluing two copies of the figure eight knot complement may carry many nonequivalent Anosov flows, and likewise a manifold composed of a figure eight complement and a trefoil complement.
We further show that certain generalized Handel--Thurston surgeries can be realized as sequences of Goodman--Fried surgeries and produce new examples of different surgery sequences resulting in the same Anosov flow.
\end{abstract}

\maketitle

\section{Introduction}

Anosov flows represent an important class of dynamical systems with remarkable ergodic and geometric properties~\cite{Ano1}. These are among the simplest examples of chaotic dynamical systems.
The prototype of an Anosov flow is the geodesic flow on the unit tangent bundle of a hyperbolic surface; another classical example is the suspension flow of an Anosov diffeomorphism of the torus. 
For nearly two decades, these two types were the only known examples.

The introduction of cut-and-paste techniques enabled the construction of many new examples. The first non-transitive Anosov flow was constructed by Franks and Williams~\cite{FranksWilliams1979} by gluing two figure eight complements along their boundaries, where the boundaries are transverse to the flows. Handel and Thurston~\cite{HandelThurston1980} constructed new transitive Anosov flows by gluing pieces of a geodesic flow. In this case the boundary is quasi transverse, i.e. contains a finite number of tangent orbits and transverse annuli between them where the flow points to different directions, yet the pieces glue to a manifold carrying a smooth Anosov flow.
Goodman~\cite{Goo} introduced the first surgery of Dehn type on Anosov flows, interpreting a closed orbit as a knot, and used it to construct the first examples of Anosov flows on hyperbolic manifolds.

More recently, Bonatti, B\'{e}guin, and Yu \cite{BeguinBonattiYu2017} developed a systematic method for constructing Anosov flows by gluing \emph{hyperbolic plugs} along their transverse boundaries. They produced the first examples of closed 3-manifolds supporting at least $n$ pairwise non-orbit-equivalent Anosov flows for any fixed $n \in \mathbb{N}$. Bowden and Mann \cite{BowdenMann2022} construct hyperbolic manifolds supporting many Anosov flows. Clay and Pinsky \cite{ClayPinsky2025Graph} Construct graph manifolds consisting of two copies of the trefoil complement that carry many transitive Anosov flows, by gluing covers of geodesic flows along quasi-transverse boundaries. Paulet \cite{Paulet2025} proved a general theorem that allows to construct new flows from building blocks with quasi-transverse boundaries.

These results all rely on the study of the invariant foliations of the flow. Perhaps surprisingly, Anosov flows are also intimately related to contact structures, i.e. maximally non-integrable plane fields. Mitsumatsu \cite{Mitsumatu1995} first observed that a vector field defining an Anosov flow can always be viewed as the intersection of a pair of transverse contact structures, one positive and one negative. Not every such bicontact structure defines an Anosov flow. However, Hoozori recently proved a converse theorem: a flow is Anosov if and only if it is defined by a \emph{strongly adapted} bicontact structure  i.e., one satisfying the condition that the Reeb flow of one structure is contained in the plane field of other structure in the pair (see \cite{Hoz5}). The Reeb vector field is called the \emph{Legendrian Reeb} field.
We call the flow defined by the intersections of the two plane fields the \emph{supported} flow.

The goal of the present work is to use the relation between Anosov flows and contact geometry to introduce a method for constructing building blocks with controlled geometry, and to use them to produce new examples of transitive Anosov flows. We say a vector field is \emph{periodic} on $\partial M$ if it is tangent to the boundary, and its orbits on the boundary are all periodic.

\begin{theorem}[Gluing theorem]
\label{thm: gluing plugs v1}
Let $\mathcal{M}_1$ and $\mathcal{M}_2$ be $3$-manifolds with boundary, carrying strongly adapted bicontact structures. If the Legendrian Reeb vector fields are periodic on two components $T_1\subset\partial\mathcal{M}_1$ and $T_2\subset\partial\mathcal{M}_2$ and the number of tangent orbits of the flow defined by the intersection is equal, then they can be glued, yielding a strongly adapted bicontact structure on $\mathcal{M} = \mathcal{M}_1 \cup \mathcal{M}_2$.
\end{theorem}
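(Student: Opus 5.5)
The plan is to reduce the gluing to a normal form near each boundary torus and then match the normal forms. Write the strongly adapted bicontact structure on $\mathcal{M}_i$ as $(\xi_+,\xi_-)$, with contact forms $\alpha_\pm$ normalized so that the Reeb field $R$ of (say) $\alpha_+$ lies in $\xi_-$. This $R$ is the Legendrian Reeb field.

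\textbf{Step 1: boundary coordinates.} On $T_i$ the field $R$ is tangent and all its orbits are periodic. Since $R$ is a nonvanishing Reeb field, it preserves $\alpha_+$ and $d\alpha_+$. I will argue that the orbits form a circle fibration of $T_i$ by closed curves of a common period, after rescaling by a constant. So there are coordinates $(x,t)\in S^1\times S^1$ on $T_i$ with $R=\partial_t$.

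\textbf{Step 2: collar normal form.} Extend to a collar $T_i\times[0,\epsilon)$ with coordinate $s$, using the flow of a vector field in $\xi_-$ transverse to $T_i$. In these coordinates, $\alpha_+ = dt + f(x,s)\,dx$. Here $\partial_s f\neq 0$ is the contact condition, and $f$ is independent of $t$ because $\mathcal{L}_R\alpha_+=0$. Since $R\in\xi_-$, the form $\alpha_-$ annihilates $\partial_t$, so $\alpha_- = a\,dx + b\,ds$. The negative contact condition then becomes an inequality on $a$ and $b$. The flow direction $\xi_+\cap\xi_-$ restricted to $T_i$ is tangent exactly where $\alpha_-$ and $\alpha_+$ are dependent on $T\partial$. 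This happens along finitely many circles $\{x=x_j\}$, which are $R$-orbits, so they are the tangent periodic orbits. Between consecutive ones the flow crosses the boundary with alternating sign. This recovers the quasi-transverse picture.

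\textbf{Step 3: matching.} The two tori $T_1$ and $T_2$ have the same number $2k$ of tangent orbits. Choose a diffeomorphism $\phi\colon T_1\to T_2$ that preserves the $t$-fibration and sends tangent orbits to tangent orbits. It must send each annulus where the flow exits $\mathcal{M}_1$ to an annulus where it enters $\mathcal{M}_2$, which is possible since the signs alternate and the counts agree. Glue the collars with $s\mapsto -s$. For $\alpha_+$ to agree across the seam, the functions $f_1(x,s)$ and $f_2(\phi(x),-s)$ must fit together into one function with nonvanishing $s$-derivative of the correct sign. I will use a one-dimensional interpolation of $f$ in $s$ on a slightly longer neck $T\times[-1,1]$, keeping $\partial_sf>0$. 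After rescaling in $x$, this keeps $\alpha_+$ of the form $dt+f\,dx$, so $\partial_t$ remains its Reeb field on the whole neck. On the neck I will interpolate $\alpha_-=a\,dx+b\,ds$ by a convex-type combination that preserves the negative contact inequality, with $a$ and $b$ independent of $t$. The combination is chosen so that the zero set of $a$, i.e. the tangency circles, matches on both sides. Then $\partial_t\in\ker\alpha_-$ throughout, so strong adaptedness persists, and $\alpha_+\wedge\alpha_-\neq0$ keeps the two structures transverse.

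\textbf{Main obstacle.} I expect the hardest part to be the interpolation of $\alpha_-$ near the tangent orbits, where $a$ vanishes. There the negative contact condition depends on the sign of $b$ relative to $\partial_x a$ and to $f$. The coefficients $a$, $b$ and $f$ must be interpolated simultaneously, and the condition is not convex in general. I will first try to put each side into a fixed local model near $x=x_j$, for instance $a=\pm(x-x_j)$ and $b$ constant, using a Gray/Moser-type isotopy tangent to $\partial_t$ that fixes $\alpha_+$. Once both sides are in the same model, the gluing is the identity in the model. A secondary obstacle is Step 1: ensuring the $R$-orbits on $T_i$ have constant period so that $R=\partial_t$ on the whole torus. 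If this fails, I would first rescale $\alpha_+$ by a function, which must be constant along $R$, and check that strong adaptedness survives this rescaling.
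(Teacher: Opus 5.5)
\textbf{There is a genuine gap, and it starts with the local model in Step 2.} You take $\alpha_-=a\,dx+b\,ds$ with $a,b$ independent of $t$, and conclude that the tangent orbits are Reeb circles $\{x=x_j\}$. Both claims are false.

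Since $\alpha_-$ has no $dt$-term,
\[
\alpha_-\wedge d\alpha_-=(a\,\partial_t b-b\,\partial_t a)\,dx\wedge dt\wedge ds .
\]
So a $t$-independent form of this type is integrable, not contact. Contactness means precisely that $\xi_-$ rotates strictly monotonically along the Reeb circles; this is the fact the paper uses in its canonical-form lemma. Three consequences follow:
\begin{itemize}
\item The negative contact condition involves neither $f$ nor $\partial_x a$, so your stated ``main obstacle'' is misplaced.
\item Your neck interpolation ``with $a$ and $b$ independent of $t$'' does not produce a contact form.
\item The flow lies in $\xi_+$ while $\alpha_+(R)=1$, so it is nowhere parallel to $R$. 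The tangent orbits on $T_i$ are therefore closed curves transverse to the Reeb circles. They form the zero set of $a|_{s=0}$, where the rotating plane $\xi_-$ passes through $TT_i$.
\end{itemize}
Two smaller remarks. Your worry about periods is unfounded: $\iota_R d\alpha_+=0$ with $R$ tangent to $T_i$ gives $d\alpha_+|_{T_i}=0$, so all Reeb circles, being homologous, have the same period. Also, to get $R=\partial_t$ on a collar you must flow a transverse annulus by $R$, since a transverse field in $\xi_-$ need not commute with $R$. This uses periodicity near $T_i$, as in the paper's definition of a $\mathcal{QTP}$ component.

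\textbf{With the correct picture, Step 3 is missing the key idea.} On your neck $R=\partial_t$ throughout, so Reeb circles are glued to Reeb circles. On each slice, the zero set of $a$ is a multicurve transverse to the $t$-circles, and its isotopy class cannot change with $s$. Hence the gluing must preserve the number $q_i$ of times a boundary Reeb circle meets a tangent orbit.

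Equal numbers $2n$ of tangent orbits do not force $q_1=q_2$. By Remark~\ref{rem:orbit slopes}, over a once-punctured torus every $\mathcal{M}^k_V$ has two tangent orbits on its boundary but $q=k$. So the diffeomorphism $\phi$ you choose need not exist.

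The paper removes this obstruction before gluing:
\begin{enumerate}
\item Proposition~\ref{pro:one intersection} performs a boundary bicontact surgery, with rational coefficient, along the L-t push-off of a tangent orbit. This changes the class of the boundary Reeb orbits so that each meets a tangent orbit $\gamma$ exactly once.
\item One then takes coordinates $(s,v,w)$, with $s$ along $\gamma$ and $w$ along the Reeb circles, in which $\alpha_-=dw+v\,ds$.
\item The rotating structure is isotoped along Reeb lines to $\alpha_+=\sin(nw)\,ds+\cos(nw)\,dv$. Monotonicity of the rotation angle in $w$ is a convex condition; this is the correct version of your interpolation idea.
\item A half-period shift in $w$, together with $s\mapsto -s$ and $v\mapsto -v$, preserves both forms and matches outgoing annuli with incoming ones.
\end{enumerate}
The paper's Legendrian Reeb field is $R_-$, so its signs are swapped relative to yours, which is harmless.
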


Thus, when we glue all boundary components we obtain a closed manifold carrying an Anosov flow by \cite{Hoz5}.
In particular, no explicit cone criterion for hyperbolicity is necessary.\\

The Fried Conjecture states that any transitive Anosov flow is obtained through a finite sequence of Goodman-Fried surgeries from the suspension flow of a hyperbolic linear automorphism of the torus (i.e. it is {\it almost orbit equivalent} to a suspension) \cite{Fri}.
Dehornoy and Shannon~\cite{DeSh19} showed that the geodesic flow of a negatively curved orbifold is almost orbit equivalent to a suspension flow of a hyperbolic linear automorphism of the torus. 
Tsang~\cite{Tsang24} established this for some further cases. We use our framework to construct new examples of Anosov flows on toroidal manifolds, and show that they are almost equivalent to geodesic flows. 

The building blocks we use are surface bundles over the circle, where the fiber $\Sigma_{g,b}$ is a genus $g$ surface with $b$ boundary components (these are three manifolds with toroidal boundaries).
Let $V$ be a vector field with $b$ non-positive index singularities on a surface $\Sigma_g$. Consider a finite family $C=\{c_1,\cdots ,c_r\}$ of non-separating curves on $\Sigma_{g,b}$ that are transverse to $V$, and let $\Lambda_C$ be the subgroup  of $\MCG(\Sigma_{g,b})$ generated by Dehn twist along the curves in $C$.

\begin{theorem}[Generating bundle plugs]\label{thm:bundle plugs}
For every $f\in \Lambda_C$ the mapping torus $\mathcal{M}_f$ with fiber $\Sigma_{g,b}$ and monodromy $f$ carries a bicontact structure so that $f$ is the first return map of a Legendrian Reeb vector field. Furthermore, the supported flows can be embedded in an Anosov flow on a closed manifold, and are almost orbit equivalent to geodesic flows.
\end{theorem}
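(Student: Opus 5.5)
We will build the bicontact structure by starting from a product model and then realizing each Dehn twist as a local modification. Think of $V$ as a vector field on $\Sigma_g$ whose $b$ non-positive index singularities are removed (blown up) to produce $\Sigma_{g,b}$. First we will construct, on the trivial bundle $\Sigma_{g,b}\times S^1$, a strongly adapted bicontact structure whose Legendrian Reeb field is $\partial_t$, up to reparametrization. The model is a geodesic-flow-like structure: take a $1$-form $\beta$ on $\Sigma_{g,b}$ dual to $V$ (so $\beta(V)>0$ away from the singularities) and a second form $\beta^\perp$ vanishing on $V$. Then set
\[
\alpha_\pm = dt\,\text{-term} \pm \ \text{rotated combinations of } \beta,\beta^\perp,
\]
so that $\xi_+\cap\xi_-$ is spanned by a lift of $V$ twisted in the fiber direction. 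Equivalently, we will use the unit tangent bundle picture of the negative-curvature geodesic flow, restricted to the region over a neighbourhood of $V$.

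Near each boundary torus, coming from a singularity of index $1-k/2\le 0$, we will arrange that $\partial_t$ is tangent to the torus and has closed orbits. The supported flow will then have $2k$ tangent orbits there, matching the quasi-transverse boundary of the blown-up geodesic flow.

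Next, for each curve $c_i$ transverse to $V$, we take an annulus $A_i\times S^1$ around $c_i\times\{t_0\}$. On it we will implement a twist along $c_i$ as a Legendrian-Reeb-preserving modification. Because $c_i$ is transverse to $V$, the flow direction crosses $A_i$ and the fibration direction is Legendrian for one structure. This allows us to cut along the torus $c_i\times S^1$ and reglue by a shear. It is the bicontact analogue of the Handel--Thurston/Goodman construction, and the transversality is exactly what keeps both contact conditions and the strong adaptedness intact. The key computation is that a shear $(x,y,t)\mapsto(x+\phi(y),y,t)$ in coordinates where $c_i=\{y=0\}$ with $V$ having positive $y$-component preserves $\alpha_\pm$ up to an interpolation. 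Sign considerations let us do this for either sign of twist, which we handle by using the opposite contact structure. After the modification, the first return map of $\partial_t$ becomes $f$ composed with the twist, so composing the twists gives every $f\in\Lambda_C$.

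For the second part, we double along the boundary using Theorem~\ref{thm: gluing plugs v1}: glue $\mathcal{M}_f$ to a copy of itself, or to the complementary piece of the geodesic flow. The Legendrian Reeb fields are periodic on the boundary tori and the tangent orbit numbers agree, so the gluing applies. Closing all boundary components yields an Anosov flow by \cite{Hoz5}. Almost orbit equivalence will come from interpreting each twist along $c_i$ as a Goodman--Fried surgery on a periodic orbit of the unperturbed geodesic flow, namely the lift of $c_i$, which is a closed orbit since $c_i$ is transverse to $V$. The result then follows from Dehornoy--Shannon \cite{DeSh19}.

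The main obstacle is the local twist step: verifying simultaneously that both contact conditions survive the interpolation and that the Reeb field of one form remains in the other plane field. We expect to handle this by choosing the twist region so that the forms depend only on the transverse coordinate, which reduces the conditions to monotonicity inequalities for one-variable functions.
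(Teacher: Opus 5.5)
Your architecture matches the paper's: a product model with a vertical Legendrian Reeb field, then twists along the $c_i$, then doubling via the gluing theorem. But the step that carries all the weight is missing.

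\textbf{The twist step.} You propose to twist directly along $c_i$. You assert that transversality to $V$ keeps both contact conditions and $R_-\in\xi_+$ intact, but defer the actual check. For almost equivalence you assert that ``the lift of $c_i$ is a closed orbit since $c_i$ is transverse to $V$''. This is false: transversality does not make $c_i$, or any lift of it, an orbit of the supported flow. It only gives $wind_V(c_i)=0$.

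The paper uses exactly this to first turn each $c_i$ into a periodic orbit lying in a fiber (Proposition~\ref{prop:wind}):
\begin{enumerate}
\item The Legendrian realization principle makes $c_i$ Legendrian for $\xi_-$ on $\Sigma_{g,b}\times\{w_i\}$.
\item Because the winding number vanishes, the locus where $\xi_+$ is tangent to the cylinder $c_i'\times S^1$ contains a closed curve $c_i''$ isotopic to $c_i'$.
\item Letting $\xi_+$ rotate $k\geq 2\sum_i\Delta W_V(c_i')$ times vertically makes these curves fit at distinct levels $w_1<\dots<w_r$.
\item Sliding $\xi_+$ along the lines of $R_-=\partial_w$, which keeps $R_-\in\xi_+$, moves $c_i''$ onto $c_i'$ and makes it biLegendrian.
\end{enumerate}

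Only along the L-t push-off of such an orbit is there a known operation, Salmoiraghi's $(1,q)$ bicontact surgery. It preserves strong adaptedness for every $q\in\mathbb{Z}$, so no switch to ``the opposite contact structure'' is needed; that switch would not be innocent anyway, since the condition $R_-\in\xi_+$ is not symmetric. It is also orbit equivalent to a Goodman--Fried surgery. Because its annulus carries the fiber framing, it multiplies the monodromy by $\tau_i^{q_i}$, in the order fixed by the levels $w_i$.

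Without this step, your local twist is neither shown to give a strongly adapted structure nor identified with a Goodman--Fried surgery, so the almost-equivalence claim also does not follow. That claim should come from two facts: the unsurgered plug $\mathcal{M}^k_V$ is a cover of a geodesic flow, and the surgeries are Goodman--Fried. Dehornoy--Shannon is not the relevant input.

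\textbf{Two smaller points.} First, the shear $(x,y,t)\mapsto(x+\phi(y),y,t)$ you write does not depend on $t$. Regluing the vertical torus $c_i\times S^1$ (or reinserting $A_i\times S^1$) by it therefore changes neither the manifold nor the monodromy, and an unchanged $\partial_t$ still has the identity as first-return map. The correct move is to cut along the horizontal annulus $A_i\times\{t_0\}$ and reglue by $(x,y)\mapsto(x+\phi(y),y)$, i.e.\ fiber-framed Dehn surgery on $c_i\times\{t_0\}$. This necessarily cuts and reglues the Reeb arcs crossing the annulus, which is exactly what the bicontact surgery does.

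Second, the number of tangent orbits on a boundary torus is $2\gcd(k,h_i)$, where $k$ is the vertical rotation number and $h_i=\ind_{v_i}-1$. It is not twice the number of prongs; for a geodesic flow it is $2$.
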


The bicontact structure is  explicitly constructed as a union of a vertically invariant contact structure $\xi_-$ approximating the fiber $\Sigma_{g,b}$, and a vertically rotating contact structure $\xi_+$. Surprisingly, we can allow the structure $\xi_+$ to rotate more than once, obtaining infinitely many bicontact structures.

If the surface $\Sigma_{g,b}$ is a torus with $b$ boundary component $T^2_b$ there is a simple vector field $V$ such that there is a complete set $\{c_1,\cdots, c_{b+1}\}$ of curves so that Dehn twists on them generate the pure mapping class group, and they are all transverse to $V$. Thus we obtain the following.

\begin{proposition}
[Punctured torus bundles]
\label{pro:torus}
Let  $T^2_b$ denote the $b$-times punctured torus. Every $T^2_b$-bundle over the circle admits infinitely many  strongly adapted bicontact structures $\mathcal{B}^k = (\alpha_-^k, \xi_+^k),\ k\in\mathbb{N}$ such that $\xi_+^n$ is isotopic to $\xi_+^m$ if and only if $n = m$. Each boundary component is quasi-transverse for the supported flow $X^t$ and periodic for $R_-^k$.
\end{proposition}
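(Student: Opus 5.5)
The plan is to deduce the proposition from Theorem~\ref{thm:bundle plugs} by choosing a vector field and a curve system on $T^2_b$ for which the Dehn twists generate the whole monodromy group.

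\textbf{Choice of vector field and curves.} On $T^2=\mathbb{R}^2/\mathbb{Z}^2$, I will take $V$ to be a perturbation of the linear field $\partial_x+\theta\partial_y$ that has $b$ saddle-type singularities, each of index $-1$. These are non-positive, so the hypothesis of Theorem~\ref{thm:bundle plugs} holds. The index sum must remain $0$, so I will compensate by inserting $b$ sources or sinks placed at the punctures. The removed punctures therefore do not contribute to the index count on $\Sigma_{g,b}$. Alternatively, I can blow up each puncture to a boundary circle on which $V$ is tangent with saddle-type behaviour. The latter option is what yields quasi-transverse boundary tori.

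For the curves, I take $c_0$ to be a meridian $\{x=\mathrm{const}\}$, which is transverse to $V$ since $V$ has a nonzero $\partial_x$ component. I take $c_1,\dots,c_b$ to be curves of slope close to $(0,1)$ perturbed so as to separate the punctures, in the standard Birman-type generating configuration for $\PMod(T^2_b)$. These are chosen transverse to $V$ by tilting them slightly against the flow direction.

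By the classical result (Birman/Humphries, as stated before the proposition), the twists along $\{c_0,\dots,c_b\}$ generate the pure mapping class group. Hence every monodromy $f$ of a $T^2_b$-bundle lies in $\Lambda_C$, up to isotopy and, if needed, up to a permutation of the boundary components. The permutation is handled by passing to a power, or by noting that bundles are determined by monodromy up to conjugacy.

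\textbf{Structures and properties.} Theorem~\ref{thm:bundle plugs} then gives a bicontact structure on $\mathcal{M}_f$. Following the construction described after it, I will let $\xi_-$ be the vertically invariant structure and $\xi_+^k$ the structure that rotates $k$ times along the circle direction. This gives the family $\mathcal{B}^k$.

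\begin{itemize}
\item \emph{Strong adaptedness.} This should follow from the construction, since $R_-^k$ is the suspension of $V$-transverse dynamics and lies in $\xi_+^k$.
\item \emph{Boundary behaviour.} Each boundary torus is periodic for $R_-^k$, because near the boundary $f$ acts as the identity (pure twists supported away from the boundary) and $R_-^k$ is vertical there. The boundary is quasi-transverse for $X^t$, because the tangencies of $\xi_+^k$ with $\xi_-$ along the boundary occur at finitely many orbits, matching the singularities of $V$ on the boundary circles.
\end{itemize}

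\textbf{Non-isotopy.} This is the step I expect to be the main obstacle: showing that $\xi_+^n$ and $\xi_+^m$ are not isotopic for $n\neq m$. I would first try a homotopy invariant. The Euler class is unlikely to separate them, but the rotation number is more promising. Specifically, I would compute the twisting of $\xi_+^k$ along the circle fibers relative to the fibration, or relative to the boundary-parallel framing on a boundary torus. This twisting is a Giroux torsion-type quantity that grows with $k$.

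If the plane fields turn out to be homotopic, I would instead argue that $\xi_+^k$ has Giroux torsion at least $k-1$ on a neighbourhood of a fiber times the circle, but not more. The upper bound would be the delicate part. I would aim to get it via the boundary condition: the characteristic foliation on a boundary torus is fixed, so one can compare the relative twisting along a Legendrian boundary-parallel curve. By the classification of tight contact structures on $T^2\times I$ (Giroux/Honda), the twisting distinguishes the isotopy classes.
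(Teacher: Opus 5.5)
Your overall plan (choose $V$ and a generating system of winding-number-zero curves on $T^2_b$, then apply Theorem~\ref{thm:bundle plugs}) is the paper's. However, the one concrete input, the vector field, is chosen so that the construction of Section~\ref{Sec4} cannot run. That construction needs $V$ to be nonvanishing on $\Sigma_{g,b}$, with its only zeros inside the removed disks, each of non-positive index. The reason is that $\alpha_+^k=\sin(kw)\,dx+\cos(kw)\,dy$ is built from the frame $(V,W)$, and the index enters through $h_i=\ind_{v_i}-1$ in the boundary models of both $\alpha_+^k$ and $\beta=(r-h_i/k)\,d\theta$.

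Your perturbed field fails this in two ways. It leaves $b$ saddles in the interior of $T^2_b$, where $\alpha_+^k$ is undefined. It also places the punctures at sources/sinks of index $+1$, which the hypothesis excludes. This is not a technicality. With $h_i=0$ the tangency curves of $\xi_+^k$ on each boundary torus are $\theta$-circles. For these to be Legendrian for $\xi_-^k$, $\beta$ must vanish on $\partial T^2_b$. Stokes then gives $\int_{T^2_b}d\beta=\int_{\partial T^2_b}\beta=0$, so no $\beta$ with $d\beta$ an area form exists. Your alternative, a boundary circle to which $V$ is tangent, also has $h_i=0$.

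On the torus, Poincar\'e--Hopf plus non-positivity forces $\ind_{v_i}=0$ at every puncture. So the right choice is simply the unperturbed linear field with punctures at regular points; this is Proposition~\ref{GeneratingV}. Then $h_i=-1$, and Remark~\ref{rem:orbit slopes} gives $2\gcd(k,h_i)=2$ tangent orbits on each boundary torus. These orbits are the slope-$h_i/k$ curves along which $\xi_+^k$ is tangent to the boundary, made Legendrian for $\xi_-^k$ by the choice of $\beta$. They are not ``singularities of $V$ on the boundary circles''; there are none. With that $V$ (slope $\theta\neq 0$) your curves work, provided $c_1,\dots,c_b$ are longitudes meeting $c_0$ once. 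As written, curves of slope near $(0,1)$ are parallel to the meridian $c_0$.

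Your fix for monodromies that permute boundary components also fails. Passing to a power of $f$ gives a finite cover, not the given bundle, and conjugation cannot make a non-pure class pure. So your argument, like the paper's (which only realizes words in $\PMod(T^2_b)$), covers only pure monodromy.

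On non-isotopy, the paper's proof offers nothing beyond property (1) asserted after Proposition~\ref{prop: bicontact plugs}, and your sketch does not close it either. Homotopy invariants cannot help. Already on $T^2_b\times S^1$, in the frame $(V,W,\partial_w)$ the Gauss map of every $\xi_+^k$ lands in an equator of $S^2$ and is therefore null-homotopic. Hence all $\xi_+^k$ are homotopic as plane fields. Any proof must be contact-topological (maximal twisting of Legendrian fibers, or torsion), and the upper bound you flag is exactly what is missing. You would also have to specify boundary conditions for the isotopy, since the boundary tangency slopes $h_i/n$ and $h_i/m$ differ.
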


Recently Bonatti, Béguin, Ma and Yu \cite{BBMY}  have also constructed transitive Anosov flows on closed fibered hyperbolic $3$-manifolds. Their construction uses Goodman--Fried surgery along orbits parallel to the fibers of surface bundles. 

Yang and Yu ask in \cite{YangYu2022Classifying} whether a toroidal manifold composed of two figure-eight knot complement pieces can carry a transitive Anosov flow. We show that in fact it can carry many.

\begin{corollary}
\label{cor:figure8}
For any natural number $k$, there exists a closed toroidal manifold supporting at least $k$ pairwise non-orbit-equivalent transitive Anosov flows, obtained by gluing two copies of the figure-eight knot complement $M_8$.
\end{corollary}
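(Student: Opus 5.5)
The figure-eight knot complement $M_8$ is the once-punctured torus bundle $\mathcal{M}_f$ with fiber $T^2_1$ and monodromy $f=\begin{pmatrix}2&1\\1&1\end{pmatrix}$. The plan is to build the flows by applying Proposition~\ref{pro:torus} to this bundle and then gluing two copies with Theorem~\ref{thm: gluing plugs v1}.

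Since $f$ lies in the pure mapping class group of $T^2_1$, Proposition~\ref{pro:torus} gives a family of strongly adapted bicontact structures $\mathcal{B}^k=(\alpha_-^k,\xi_+^k)$ on $M_8$. For each of them the Legendrian Reeb field $R_-^k$ is periodic on the boundary torus, and the boundary is quasi-transverse for the supported flow. Let $n_k$ be the number of tangent orbits on $\partial M_8$ for $\mathcal{B}^k$. From the construction, $\xi_+$ rotates $k$ times vertically, so I expect $n_k$ to grow linearly in $k$, roughly $n_k=2k\cdot(\text{number of singular directions})$.

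Given $i,j$ with $n_i=n_j$ (in particular $i=j$), Theorem~\ref{thm: gluing plugs v1} glues $(M_8,\mathcal{B}^i)$ to $(M_8,\mathcal{B}^j)$ along the boundary tori. The gluing map can be chosen among maps of $T^2$ sending periodic Reeb orbits to periodic Reeb orbits. The result is a closed manifold $N$ with a strongly adapted bicontact structure, hence an Anosov flow by \cite{Hoz5}. The flow is transitive because each piece is transitive and the quasi-transverse boundary has annuli pointing both in and out, as in Handel--Thurston. The manifold $N$ is toroidal because the gluing torus is incompressible, both pieces being hyperbolic knot complements.

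To get $k$ flows on one fixed manifold, I would first fix the topological gluing map $\phi$, for instance one sending the Reeb periodic direction to the same slope on both sides. I would then vary the rotation number: use $\mathcal{B}^m$ on both copies for $m=1,\dots,k$, arranged so that the periodic slope is independent of $m$. This needs checking in the construction, but it is plausible since the Reeb periodic direction is determined by $V$ and the fiber structure, not by $k$. All $k$ flows then live on the same manifold $N=M_8\cup_\phi M_8$.

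To show the flows are pairwise non-orbit-equivalent, I would use the JSJ torus $T$, which is unique up to isotopy. By Barbot--Fenley theory, an orbit equivalence maps a quasi-transverse representative of $T$ to one for the other flow. The number of tangent periodic orbits on a minimal quasi-transverse (Birkhoff) torus in the class of $T$, or equivalently the free-homotopy data of its boundary orbits, is an invariant, and $n_m$ differs for different $m$. Establishing this invariance and making sure the tori are minimal, i.e.\ not reducible to fewer tangent orbits, is the main obstacle. My first attempt would be to argue via the orbit-space picture: the tangent orbits on a Birkhoff torus correspond to a chain of lozenges whose length is an orbit-equivalence invariant of the torus class.
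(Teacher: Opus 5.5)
Your overall architecture matches the paper's: bundle plugs on $M_8$, two copies glued along the boundary, and the unique JSJ torus used to compare the flows. However, the two steps that actually carry the corollary both fail as you set them up.

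\textbf{The distinguishing invariant is constant.} On the once-punctured torus the point removed from $T^2$ has index $0$ for $V$, so $h=-1$. Remark~\ref{rem:orbit slopes} then gives $2\gcd(m,h)=2$ tangent orbits on $\partial M_8$ for every rotation number $m$; the paper says explicitly that $\phi_n$ has two tangent orbits, so there is no linear growth. What does change with $m$ is the \emph{slope}: the tangent orbits of $\mathcal{B}^m$ lie in the class $\mu+m\lambda$, where $\mu$ is the meridian and $\lambda$ the longitude. This is the invariant the paper uses. An orbit equivalence preserves the unique essential torus and restricts to homeomorphisms between the $M_8$ pieces. These preserve $\mu$ and $\lambda$ up to sign, and hence preserve the slopes of the tangent orbits. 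You mention ``free-homotopy data'' in passing but treat it as equivalent to the count, which it is not. Once you use slopes, the Birkhoff-torus minimality problem you single out as the main obstacle disappears.

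\textbf{The gluing map cannot be fixed independently of $m$.} The map in Theorem~\ref{thm: gluing plugs} sends Reeb orbits to Reeb orbits, and tangent orbits to tangent orbits with reversed direction ($w_1\mapsto w_2+\pi$, $s_1\mapsto -s_2$). After a boundary surgery (Proposition~\ref{pro:one intersection}) you may indeed take the new Reeb orbits parallel to $\lambda$ for every $m$, as you hoped. But then gluing $\mathcal{B}^n$ to $\mathcal{B}^m$ forces $\lambda\mapsto\lambda$ and $\mu+n\lambda\mapsto-(\mu+m\lambda)$, that is, $\mu\mapsto-\mu-(n+m)\lambda$. With $\mathcal{B}^m$ on both sides this is $\mu\mapsto-\mu-2m\lambda$, which varies with $m$. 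So your $k$ flows live on $k$ different manifolds: the intersection number $2m$ of the two meridians on the JSJ torus is a topological invariant.

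The missing idea is that, since every $\mathcal{B}^m$ has exactly two tangent orbits, any two of them are compatible. Only the \emph{sum} of the two rotation numbers is constrained by the gluing. Fix $k$ and glue $\mathcal{B}^n$ to $\mathcal{B}^{2k-n}$ for $1\le n\le k$, all by the single map $\Psi_k\colon\mu\mapsto-\mu-2k\lambda$, $\lambda\mapsto\lambda$, which sends $\mu+n\lambda$ to $-(\mu+(2k-n)\lambda)$. The resulting flows on the one manifold $M_8\cup_{\Psi_k}M_8$ are then distinguished by the unordered pair of boundary-orbit slopes $\{n,2k-n\}$.
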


Using our construction we are also able to generalize the Handel--Thurston  and Clay--Pinsky \cite{ClayPinsky2025Graph} examples. Namely, we are able to glue any two covers of geodesic flows on a manifold with boundary that have the same number of periodic orbits. Furthermore, we are able to prove that flows obtained by gluing covers of geodesic flows on once-punctured tori are almost equivalent to geodesic flows.\\
 
\textbf{Acknowledgments.} The authors wishes to thank Thomas Barthelmé, Chi-Cheuk Tsang, Neige Paulet, Surena Hozoori and Antonio Alfieri for illuminating conversations and helpful comments.

\section{Anosov flows and bicontact strucures}

\textbf{Contact structures and Reeb flows.} A co-oriented \textbf{contact structure} is a plane field distribution that is maximally non-integrable in the sense that it can be described as the kernel of a $C^1$ 1-form satisfying the relation $\alpha\wedge d\alpha\neq0$. The condition of contactness ensures that there is not a subsurface $S$ such $TS=\ker \alpha$, even in a neighborhood of a point. Thus, contact structures can be thought as polar opposite of foliations. Contact structures are either positive or  negative. A positive contact structure is a plane field distribution $\xi_+$ described by a $C^1$ 1-form satisfying the relation $\alpha_+\wedge d\alpha_+>0$. A negative contact structure $\xi_-$ is described instead by a $C^1$ 1-form such that $\alpha_-\wedge d\alpha_-<0$. A contact form $\alpha$ such that $\ker \alpha=\xi$ is not unique: in general every contact form $\alpha'$ supporting $\xi$ is of the form $\alpha'=f\alpha$ for some non vanishing function $f:M\rightarrow \mathbb{R}$.

Note that contact structures may be defined on manifolds with boundary. Let $(M,\xi)$ be a contact manifold with boundary $\partial M$. The contact plane distribution $\xi$ induces on $\partial M$ a well defined singular 1-dimensional foliation called the \textbf{characteristic foliation} induced by $\xi$ on $\partial M$. This fact is not in general true for plane fields that are not maximally non-integrable.

An important property of contact structures is that they do not have local invariants. Indeed by Darboux Theorem positive (negative) contact structures are all locally contactomorphic to the positive (negative) {\it standard contact structure in} $\mathbb{R} ^3$ described by $\xi_{std}^+=\ker dz-y\:dx$ and $\xi_{std}^-=\ker dz+y\:dx$. Therefore we can locally visualize a positive contact (negative) structure as a plane field whose plane rotate counterclockwise (clockwise) along the $x$-axis. 

Suppose that $\alpha$ is a contact form. There is an important flow associated to $\alpha$ called the \textbf{Reeb flow}. This is the unique vector field $R_{\alpha}$ satisfying the equations
\[
\alpha(R_{\alpha})=1,\,d\alpha(R_{\alpha},\cdot)=0.
\]
These relations imply that $R_{\alpha}$ is transverse to $\ker \alpha$ and $\mathcal{L}_{R_{\alpha}}\alpha=0$ (i.e. $R_{\alpha}$ preserves $\alpha$). Moreover, the Reeb vector field preserves the natural volume form $\alpha\wedge d\alpha$ called the {\it Liouville} form (see \cite{Et04}).
\\

\begin{figure}[!ht]
    \centering
    \includegraphics[width=0.5\textwidth]{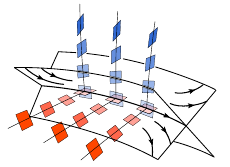}
    \caption{A Bicontact structure and the invariant foliations along a flowline of an Anosov flow.}
    \label{fig:F1}
\end{figure}

Mitsumatsu \cite{Mitsumatu1995} first noticed that the generating vector field of an Anosov flow also belongs to the intersection of two transverse contact structures $(\xi_-,\xi_+)$, one negative and one positive (see also Eliashberg-Thurston \cite{ElTh}) called \textbf{bicontact structure}, as depicted in Figure~\ref{fig:F1}. Such a pair of plane fields will be denoted with the symbol $\mathcal{B}$ in the rest of the paper. However, not all bi-contact structures define Anosov flows. For instance it is possible to show that there are bicontact structures on $S^3$ and $T^3$ but it is known that neither $S^3$ nor $T^3$ support an Anosov flow.
Hoozori gives a characterization of bicontact structures defining Anosov flows. 

\begin{theorem}[Hozoori \cite{Hoz5}]
\label{thm:Hoz}
A flow is Anosov if and only if it lies in the intersection of a pair $(\alpha_-,\xi_+)$ where $\xi_+$ is positive contact structure and $\alpha_-$ is a negative contact form, such that the Reeb vector field of $\alpha_-$ is contained in $\xi_+$.
\end{theorem}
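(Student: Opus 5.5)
The statement is an equivalence, and the two directions need different tools. Throughout, $X$ is the flow, $\xi_-=\ker\alpha_-$, and $R_-$ is the Reeb field of $\alpha_-$. The common thread is to translate everything into the projectively Anosov setting of Mitsumatsu and Eliashberg--Thurston. Recall that a bicontact pair $(\xi_-,\xi_+)$ with $X\in\xi_-\cap\xi_+$ is the same thing as $X$ being projectively Anosov. That means $TM/\langle X\rangle$ carries a continuous $X$-invariant splitting $E^s\oplus E^u$ with domination: there are $C>0$ and $\lambda>0$ with $\|D\phi^t|_{E^s}\|\,\|D\phi^{-t}|_{E^u}\|\le Ce^{-\lambda t}$. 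Given this, Anosov is equivalent to upgrading domination to genuine contraction of $E^s$ and expansion of $E^u$, and the extra Reeb condition should supply exactly that upgrade.

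\textbf{($\Leftarrow$).} First, since $X$ and $R_-$ both lie in $\xi_+$ and are linearly independent ($R_-$ is transverse to $\xi_-\ni X$), we have $\xi_+=\langle X,R_-\rangle$. Next, consider $\mathcal{L}_X\alpha_-$. Because $\alpha_-(X)=0$, Cartan's formula gives $\mathcal{L}_X\alpha_-=\iota_Xd\alpha_-$, and this form vanishes on $X$. Evaluating it on $R_-$ gives $d\alpha_-(X,R_-)=-\iota_{R_-}d\alpha_-(X)=0$. So $\mathcal{L}_X\alpha_-$ vanishes on both $X$ and $R_-$, i.e.\ on $\xi_+$. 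Hence $\mathcal{L}_X\alpha_-=h\,\alpha_+$ for some function $h$, where $\alpha_+$ is a defining form for $\xi_+$. The function $h$ is nonvanishing, since the negative contact condition on $\alpha_-$ forces $\iota_Xd\alpha_-\neq 0$.

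This identity says that the flow moves the plane $\xi_-$ in the direction of $\xi_+$ at a rate controlled by $h$. The next step is to identify $\alpha_-$ restricted to $TM/\langle X\rangle$ as a linear functional whose kernel is $\xi_-/X$; by domination, the cone containing $\xi_-/X$ is forced toward $E^s$ under backward iteration. Since $\alpha_-$ is a fixed smooth form on a compact manifold, iterating the identity produces an exponential rate for $\alpha_-$ on $E^u$, via $\frac{d}{dt}\log\alpha_-(D\phi^t v)$ computed from the identity. Symmetrically, the invariance of $\alpha_-\wedge d\alpha_-$ under $R_-$, together with the fact that $X$ preserves a volume up to a coboundary, should give the corresponding statement for $E^s$.

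\textbf{Main obstacle.} The hard step is turning the pointwise sign information into uniform exponential rates. My first attempt would be to show that the quantity $-\,d\alpha_-(X,\cdot)$ evaluated on a unit vector along $E^u$ has a sign that integrates to a positive Lyapunov exponent, uniformly over all invariant measures. The tool would be the invariant-measure characterization of hyperbolicity: one shows every ergodic measure has nonzero exponents on $E^u$ and $E^s$, using the identity $\mathcal{L}_X\alpha_-=h\alpha_+$ integrated against the measure.

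\textbf{($\Rightarrow$).} Given an Anosov flow, I would start from continuous forms $\alpha_s,\alpha_u$ with kernels $E^{ws}$ and $E^{wu}$ (the weak stable and weak unstable bundles). Then I would take $\alpha_\pm$ of the form $\alpha_u\pm\alpha_s$, after first reparametrizing the norms so that contraction and expansion occur at adapted rates along $X$. These forms must then be smoothed, for example by averaging $\alpha_{s/u}$ along the flow for a long time. The remaining task is to adjust the conformal factor on $\alpha_-$ so that $\iota_{R_-}d\alpha_-=0$ forces $R_-$ into $\ker\alpha_+$. I would solve for this factor using the freedom $\alpha_-\mapsto f\alpha_-$, which reduces the requirement to a cohomological equation along $X$. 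That equation is solvable by the Livšic-type arguments available for Anosov flows.
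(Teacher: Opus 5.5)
The paper does not prove this statement; it quotes Hozoori. Your argument therefore has to stand on its own, and the direction ($\Rightarrow$) does not.

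\textbf{The gap in ($\Rightarrow$).} The last step fails. With $\xi_-$ and $\xi_+$ both fixed, the requirement $R_{f\alpha_-}\in\xi_+$ is indeed a scalar equation $X(\log f)=g$. But Liv\v{s}ic's theorem solves it only when $\int_\gamma g\,dt=0$ for every closed orbit $\gamma$. You never check this, and in general it is false.

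Take your model $\alpha_\pm=\alpha_u\pm\alpha_s$, with $\mathcal{L}_X\alpha_u=r_u\alpha_u$ and $\mathcal{L}_X\alpha_s=r_s\alpha_s$. Then $\mathcal{L}_X(f\alpha_-)=(Xf+fr_u)\alpha_u-(Xf+fr_s)\alpha_s$. This is proportional to $\alpha_+$ if and only if $X(\log f)=-\tfrac12(r_u+r_s)$. The period of the right-hand side over $\gamma$ is $-\tfrac12\log|\det P_\gamma|$, where $P_\gamma$ is the linearized Poincar\'e return map, and these periods are independent of the choice of norms. They all vanish essentially only when $X$ preserves a volume. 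Already for $f\equiv 1$, the symmetric pair is strongly adapted iff $r_u+r_s=0$. So your construction cannot handle dissipative Anosov flows, for example any non-transitive one, and nothing in the smoothing removes this obstruction.

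\textbf{The missing idea.} The Reeb condition leaves no freedom in $\xi_+$. By the computation you yourself do in ($\Leftarrow$), $\iota_Xd\alpha_-$ kills both $X$ and $R_-$. Hence $R_-\in\xi_+$ forces $\xi_+=\ker(\mathcal{L}_X\alpha_-)$.

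So you should fix only $\alpha_-$ and define $\xi_+:=\ker(\mathcal{L}_X\alpha_-)$. The Reeb condition then holds by construction. What remains is the open condition that $\mathcal{L}_X\alpha_-$ be a positive contact form.

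For $\alpha_-$ the negative one of $\alpha_u\pm\alpha_s$, this condition amounts to $X\log(-r_s/r_u)<r_u-r_s$. It holds once the norms on $E^s$ and $E^u$ are replaced by time-$T$ averages: then $Xr_u$ and $Xr_s$ are $O(1/T)$, while $r_u-r_s$ stays bounded below. It also survives a smoothing that keeps $\alpha_-$, $\mathcal{L}_X\alpha_-$ and $\mathcal{L}_X^2\alpha_-$ $C^0$-close. The reason is that for any form $\beta$ with $\beta(X)=0$ one has $\iota_X(\beta\wedge d\beta)=-\beta\wedge\mathcal{L}_X\beta$.

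\textbf{The direction ($\Leftarrow$).} Your identity $\mathcal{L}_X\alpha_-=h\alpha_+$ is the right starting point. However, the ``main obstacle'' you describe is not really there, while the step that actually matters is missing.

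For $v\in E^u_x$ one has $\frac{d}{dt}\log|\alpha_-(D\phi^tv)|=(h\rho_u)(\phi^tx)$, where $\rho_u$ is $\alpha_+/\alpha_-$ restricted to $E^u$; define $\rho_s$ on $E^s$ likewise. What you need is that $\rho_u$ and $\rho_s$ are nowhere zero and have opposite signs.

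This holds because $E^u$ and $E^s$ lie in the interiors of the two complementary arcs of $\mathbb{P}(TM/\langle X\rangle)$ cut out by $\xi_\pm/X$. They are the limits of the nested arcs swept out by $\phi^{\pm t}_*\xi_\pm$, as in Eliashberg--Thurston.

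Compactness then gives $|h\rho_{u,s}|\ge c>0$, with opposite signs on the two lines. Since $|\alpha_-|$ is a norm on each line, one line is uniformly expanded and the other uniformly contracted, and domination decides which is which. No invariant measures or Lyapunov exponents are needed.

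Your claim that $X$ ``preserves a volume up to a coboundary'' is false in general, since that statement is exactly volume preservation. The $R_-$-invariance of $\alpha_-\wedge d\alpha_-$ is likewise irrelevant. Neither is needed, because $E^s$ is handled by the same computation as $E^u$.
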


A bicontact structure $(\xi_-,\xi_+)$ that is supported by a pair $(\alpha_-,\xi_+)$ where $\alpha_-$ is a contact form and $\xi_+$ is a contact structure such that $R_-$ is contained in $\xi_+$ as depicted in Figure~\ref{fig:F7}, is called $\textbf{strongly adapted}$ (abbreviation $\mathcal{SA}$). 

\begin{figure}
    \centering
    \includegraphics[width=0.4\textwidth]{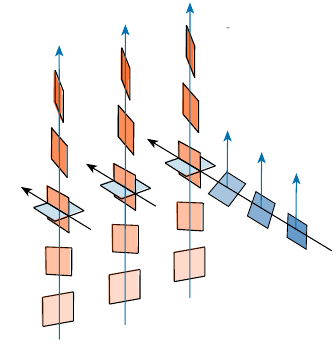}
    \caption{Strongly adapted bicontact structure. The blue vertical arrows represent the Reeb $R_-$ vector field of the negative contact form $\alpha_-$. The supported vector field is represented by the black arrows.}
    \label{fig:F7}
\end{figure}

\begin{remark}
    There is a one to one correspondence between strongly adapted bicontact structures and Legendrian Reeb vector fields, i.e. Reeb vector fields for a negative (or positive) contact form that are Legendrian for a positive (or negative) contact structure. 
\end{remark}

\section{Gluing bicontact plugs and bicontact surgeries}

The goal of this chapter is to present a gluing theorem for three manifolds with boundaries endowed with a strongly adapted  bicontact structure with a particular type of boundary behavior.

\begin{definition}
A \textbf{bicontact plug} $\mathcal{M}$ is a pair $(M,\mathcal{B})$ where $M$ is a manifold with boundary equipped with a bi-contact structure $\mathcal{B}=(\xi_-,\xi_+)$. If the bicontact structure $\mathcal{B}$ is supported by a strongly adapted bicontact pair $(\alpha_-,\xi_+)$ we call $\mathcal{M}$ strongly adapted bicontact plug (abbreviation $\mathcal{SAB}$-plug).
\end{definition}

In the rest of the paper we denote a vector field that belongs to the intersection of $\xi_-$ and $\xi_+$ with the symbol $X$ and we will use $X^t$ to denote its flow. Moreover we will say that $X$ or $X^t$ are {\it supported} by the bicontact structure $\mathcal{B}$. We restrict our attention to $\mathcal{SAB}$-plug with particular boundary behaviours.

\begin{definition}A connected component $B$ of $\partial M$ is called {\emph{quasi-transverse periodic}} boundary component (abbreviation $\mathcal{QTP}$-boundary component) if

 \begin{enumerate}
  \item $B$ is a quasi-transverse torus for $X^t$ with $2n$ orbits.
  \item 
  The Reeb vector field $R_-$ of $\alpha_-$ near the boundary is periodic.
\end{enumerate}
\end{definition}

\begin{remark}
  Examples of $\mathcal{SAB}$-plugs with $\mathcal{QTP}$-boundary are the $k$-fold covers of the geodesic flows on unit tangent bundles of hyperbolic surfaces or orbifolds with geodesic boundary. The ambient manifolds are trivial bundles (in the surface case) or Seifert fiber spaces (see \cite{MontesinosBook87,ClayPinsky2025Graph}).
\end{remark}

 \begin{definition} 
    Let $\mathcal{M}_1$ and $\mathcal{M}_2$ be two $\mathcal{SAB}$-plugs with $\mathcal{QTP}$-boundary. We say that two components $B_1$ of $\partial \mathcal{M}_1$ and $B_2$ of $\partial \mathcal{M}_2$ are \textbf{compatible} if $B_1$ and $B_2$ have the same number of closed orbits.
\end{definition}

It turns out that we can glue plugs along compatible boundary components. The following is a more exact reformulation of Theorem~\ref{thm: gluing plugs v1}.

\begin{theorem}\label{thm: gluing plugs}

Let $\mathcal{M}_1$ and $\mathcal{M}_2$ be two $\mathcal{SAB}$-plugs with compatible $\mathcal{QTP}$-boundary components $B_1$ and $B_2$. After a perturbation of the bicontact structures in a collar neighborhood of $B_1$ an $B_2$, there is a diffeomorphism $F:B_1\rightarrow B_2$ such that $\mathcal{M}_1 \cup_F \mathcal{M}_2$ is a $\mathcal{SAB}$-plug. In particular, if $M=\mathcal{M}_1 \cup_F \mathcal{M}_2$ is closed, it carries an Anosov flow.
\end{theorem}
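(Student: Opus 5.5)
My plan is to put the structure near each boundary component into a standard normal form, determined only by the number $2n$ of tangent orbits, and then glue two normal forms by an explicit map. The key point is that $R_-$ is periodic near $B_i$. Hence a collar $U_i\cong T^2\times[0,\epsilon)$ is foliated by closed Reeb orbits, and $R_-$ generates a circle action preserving $\alpha_-$. After reparametrizing, we may take all Reeb orbits near $B_i$ to have the same period. Choose coordinates $(\theta,x,t)$ on $U_i$ with $\theta\in S^1$ along $R_-$, $x\in S^1$ along $B_i$, and $t$ the collar coordinate. The standard normal form for an $S^1$-invariant contact form with Reeb field $\partial_\theta$ is then $\alpha_-=d\theta+\beta$, where $\beta$ is a $1$-form on the annulus $(x,t)$ with $d\beta$ a negative area form.

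Because $R_-\subset\xi_+$, the plane field $\xi_+$ contains $\partial_\theta$. First I will average $\xi_+$ over the circle action; being Legendrian for $\partial_\theta$ and contact are both open conditions, and since $R_-$ is tangent to $\xi_+$ I expect averaging to keep $\xi_+$ contact after a small perturbation supported in the collar. This lets me assume $\xi_+=\ker(\cos\phi(x,t)\,dx-\sin\phi(x,t)\,(\cdots))$ is $\theta$-invariant. Then $\xi_+$ is spanned by $\partial_\theta$ and a horizontal line field $L$ on the annulus, with slope $\phi(x,t)$. The supported flow $X$ is the line $\xi_-\cap\xi_+$, and its tangency to $B_i$ at exactly $2n$ orbits is recorded by the $2n$ zeros of the function $\phi(x,0)$ restricted to the boundary circle. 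Positivity of $\xi_+$ becomes a monotonicity condition on $\phi$ in the $t$-direction, roughly $\partial_t\phi>0$ after accounting for $\beta$. This reduces all the data near $B_i$ to a pair $(\beta,\phi)$ on an annulus $S^1\times[0,\epsilon)$, with $\phi(\cdot,0)$ having $2n$ sign changes.

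The next step is to deform, inside the collar and fixing everything near $t=\epsilon$, to a fixed model. I would take $\beta=-t\,dx$ and $\phi(x,t)=$ a fixed function $\phi_n$ with $2n$ simple zeros in $x$ on the boundary. Both the set of admissible $\beta$ with fixed boundary behaviour and the set of admissible $\phi$ satisfying the monotonicity and with the same number of boundary zeros (up to a diffeomorphism of $S^1$ in $x$) should be convex, or at least connected. I would therefore interpolate linearly in a cutoff region, checking the open contact conditions along the way. Any diffeomorphism of $S^1$ needed to match zeros is absorbed into the final gluing map.

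Finally, I will glue. I define $F:B_1\to B_2$ by $(\theta,x)\mapsto(\theta,x)$ in model coordinates, glued with $t\mapsto -t$ so that the two collars become $T^2\times(-\epsilon,\epsilon)$. For $\alpha_-$ this works because $-t\,dx$ extends smoothly across $t=0$, giving $d\theta-t\,dx$, which is still negative contact; the orientation reversal of the collar is compensated by the sign conventions, which I would check. For $\xi_+$, I need $\phi$ on the second side to be the continuation of $\phi$ on the first, which requires the model $\phi_n(x,t)$ to be defined for $t\in(-\epsilon,\epsilon)$ with $\partial_t\phi_n>0$. The quasi-transverse picture on each side, with flow pointing in on some annuli and out on others, is exactly consistent with this. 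Hozoori's Theorem~\ref{thm:Hoz} then gives an Anosov flow when the glued manifold is closed. The main obstacle I expect is the middle step: showing that the interpolation of $\phi$ preserves positivity of $\xi_+$ while the boundary tangency pattern, with $2n$ orbits alternating in/out, stays fixed. My fallback is to choose the model $\phi_n$ with very large $t$-derivative, so that the contact condition dominates the cutoff error terms.
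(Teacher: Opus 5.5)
\textbf{There is a genuine gap.} The averaging step fails, and it takes the whole reduction to data $(\beta,\phi)$ on an annulus with it. A plane field that contains $R_-=\partial_\theta$ and is invariant under its flow is $\ker\bigl(a(x,t)\,dx+b(x,t)\,dt\bigr)$. This is pulled back from the annulus, hence integrable. Equivalently, a contact vector field lying in its own contact plane must vanish; this is the paper's remark that $R_-$ nowhere preserves $\xi_+$. Concretely, for $\xi_+=\ker(\cos\phi\,dx-\sin\phi\,dt)$ with $\phi=\phi(\theta,x,t)$ one computes $\alpha_+\wedge d\alpha_+=\partial_\theta\phi\;d\theta\wedge dx\wedge dt$. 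So the contact condition is exactly that $\xi_+$ rotates strictly monotonically along the Reeb orbits, $\partial_\theta\phi\neq 0$. Neither $\partial_t\phi$ nor $\beta$ plays any role, and averaging forces $\partial_\theta\phi\equiv0$. Averaging is not a small perturbation, so openness of the contact condition does not help.

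Your boundary picture is inconsistent for the same reason. In a $\theta$-invariant model the tangency locus of $\xi_+$ with $B_i$ is a union of Reeb orbits $\{x=x_j\}$. These can never be orbits of $X\subset\xi_-$, because $\alpha_-(R_-)=1$. The tangent orbits are transverse to the Reeb circles, so $\xi_+$ must depend on the Reeb coordinate. This is what the paper's normal form does: $\alpha_-=dw+v\,ds$ and $\alpha_+=\sin(nw)\,ds+\cos(nw)\,dv$, with $w$ the Reeb parameter.

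Once this is fixed, the step you flagged as the main obstacle is harmless. A cutoff $\rho(t)$ gives $\partial_\theta\bigl((1-\rho)\phi_0+\rho\,\phi_1\bigr)>0$ as a convex combination, which is essentially the paper's isotopy of $\xi_+$ sliding along Reeb orbits. The real obstruction is one your plan does not see. Along a Reeb orbit on $B_i$ the plane $\xi_+$ makes $2nq$ half-turns, where $q$ is the number of intersections of a Reeb orbit with one tangent orbit. This integer has three relevant properties:
\begin{enumerate}
\item It is unchanged by any deformation keeping $R_-=\partial_\theta$ on the collar, in particular by your deformation of $\beta$.
\item Your target model, with $\beta=-t\,dx$ and the tangent orbits being $x$-circles, has $q=1$, so it is reachable by such a deformation only if $q=1$ already.
\item Any gluing map sending Reeb orbits to Reeb orbits and tangent orbits to tangent orbits preserves $q$.
\end{enumerate}
So equal $n$ alone is not enough. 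The paper first performs a boundary bicontact surgery with a rational coefficient along the L-t push-off of a tangent orbit (Proposition~\ref{pro:one intersection}). This changes the Reeb orbits near $B_i$ until $q=1$, and only then does the paper normalize.

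Finally, your gluing $(\theta,x,t)\mapsto(\theta,x,-t)$ reverses orientation: $d\theta-t\,dx$ on one side meets $d\theta+t\,dx$ on the other. You must also reverse the boundary direction transverse to $R_-$, as the paper does with $s\mapsto-s$.
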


The proof of this statement is divided in two parts. First we prove that in the hypothesis of Theorem \ref{thm: gluing plugs} we can modify the bicontact structure near $B$ preserving the strongly adapted property in such a way that the new $2n$ closed orbits of $X^t$ on $B$ intersect the orbits of the new $R_-$ exactly once. This modification uses a construction called \textbf{bicontact surgery}. Then, the flexibility of the contact structure $\xi_+$ allows us to construct an isotopy from the initial bicontact structure to a standard one near the boundary. Since this isotopy can be achieved through strongly adapted bicontact structures it induces a one parameter family of flows. 

The second author of this paper introduced a $(1,q)$-Dehn type surgery in a strongly adapted bicontact structure that yields a new strongly adapted bicontact structure (see \cite{Sal1} and \cite{Sal2}). When the operation is performed near a biLegendrian, i.e. a closed orbit of the flow supported by the bicontact structure, it can be interpreted as the counterpart of Goodman-Fried surgery in the context of strongly adapted bicontact structures.

\begin{definition}
A knot $K$ in a manifold carrying a contact structure is \emph{Legendrian} if the tangent $TK$ is contained in the contact plane field. If the manifold carries a bicontact structure $(\xi_-,\xi_+)$, a knot $K$ is \textbf{Legendrian-transverse} (abbreviation L-t knot) if $TK$ is contained in $\xi_-$ and it is transverse to $\xi_+$.
\end{definition}

Note that given a closed orbit $\gamma$ of a flow $X^t$ supported by a strongly adapted bi-contact structure,
pushing it along one of the Reeb vector fields keeps is tangent to the corresponding contact structure, while making it transverse to the other structure.
Thus, a periodic orbit is always isotopic to a Legendrian-transverse knot $K$. 

\begin{definition}
    A \textbf{L-t push-off} of a closed orbit $\gamma$ in a strongly adapted bicontact structure is the L-t knot obtained by translating $\gamma$ using the flow of $R_-$ (see Figure \ref{fig:Surgery} on the right). 
\end{definition}

 \begin{theorem}[Salmoiraghi \cite{Sal1}]
     Let $K$ be a knot that is a Legendrian-transverse push-off of a closed orbit $\gamma$ in a strongly adapted bicontact structure $(\alpha_-,\xi_+)$. A Legendrian-transverse $(1,q)$-surgery yields a new strongly adapted bicontact structure for all $q\in \mathbb{Z}$. so that the resulting flow on $M'$ is orbit equivalent to a flow obtained by Goodman--Fried surgery along $K$.
 \end{theorem}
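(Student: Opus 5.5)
The proof works in a standard tubular neighborhood of the L-t knot $K$ and then glues back. Since $K$ is the push-off of the closed orbit $\gamma$ along the flow of $R_-$, and $R_-$ preserves $\alpha_-$ and lies in $\xi_+$, the knot $K$ is Legendrian for $\xi_-$ and transverse to $\xi_+$.

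\textbf{Step 1: normal form near $K$.} I would write a solid torus $N=S^1_\theta\times D^2_{(x,y)}$ around $K=\{x=y=0\}$ in which
\[
\alpha_-=\cos(\theta)\,dx-\sin(\theta)\,dy
\]
up to rescaling. In these coordinates I would choose $R_-$ to be a fixed horizontal direction, for instance $\partial_y$ up to the rotation, and choose $\xi_+$ to be a positively twisting plane field containing $R_-$. The existence of such coordinates comes from a Legendrian neighborhood theorem for $\alpha_-$: the standard neighborhood $J^1S^1$ with reversed orientation. This is adjusted so that $R_-$ is tangent to the discs and $\xi_+$ is transverse to $\partial_\theta$. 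Along $K$ the structure $\xi_+$ is transverse to $K$ and contains $R_-$, so $\xi_+$ is determined by a slope function. A relative flexibility argument for positive contact structures containing a given Legendrian line field then lets me isotope $\xi_+$, through contact structures containing $R_-$, to a model
\[
\xi_+=\ker\bigl(d\theta+h(r)\,\beta\bigr),
\]
with $\beta$ a 1-form vanishing on $R_-$ and $h$ chosen so that the structure is positive. The flexibility argument is an interpolation of slopes, which keeps the contact condition open.

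\textbf{Step 2: the surgery.} I would cut out $N$ and reglue by the map $\phi_q(\theta,x,y)=(\theta,R_{q\theta}(x,y))$, where $R_{q\theta}$ is rotation by angle $q\theta$. This realizes the $(1,q)$ slope. The regluing spoils the forms, so I would repair them on a shell $A=S^1\times[r_1,r_2]\times S^1$. I would pull back $\alpha_-$ by $\phi_q$, which adds a term proportional to $q\,r^2\,d\theta$. The new form is then interpolated with the old one by
\[
\alpha_-'=\cos(\theta)\,dx-\sin(\theta)\,dy+g(r)\,d\theta,
\]
where $g$ is a monotone cutoff. I would check the negativity condition $\alpha_-'\wedge d\alpha_-'<0$ by direct computation, choosing $r_2-r_1$ and the size of $g'$ appropriately; this is where the integer $q$ of either sign enters. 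Next I would compute the new Reeb field $R_-'$. It has an extra component along $\partial_\theta$ controlled by $g'$, and $\xi_+$ must be redefined on $A$ to contain $R_-'$. Because both ends of $A$ already satisfy the strongly adapted condition, I would define $\xi_+'$ as the span of $R_-'$ together with a vector field whose angle interpolates monotonically. Positivity then reduces to the rotation being in the positive sense, which is arranged by adding extra twisting in $\theta$ if needed.

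\textbf{Step 3: identify the flow.} The supported field is $X'=\xi_-'\cap\xi_+'$. Outside $A$ it agrees with $X$ (or with $\phi_q^*X$ inside $N$). On $A$ it is transverse to the tori $r=\mathrm{const}$ except near the orbit, and its first-return dynamics is a fractional twist along the annulus swept between $\gamma$ and $K$. I would show that $X'$ is orbit equivalent to the Fried model. To do this, I would compare $X'$ on the blow-up of $\gamma$ with Fried's description of the Goodman--Fried surgery: blow up $\gamma$, collapse the boundary torus along the new slope $(1,q)$, and check that the induced foliation by orbits matches. Hyperbolicity itself comes for free from Theorem~\ref{thm:Hoz}, since the structure is strongly adapted.

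I expect the main obstacle to be Step 2, namely keeping $R_-'$ inside a positive $\xi_+'$ on the shell. The negativity of $\alpha_-'$ and the positivity of $\xi_+'$ impose opposite sign constraints on $g'$. I would resolve this first by spreading the twist over a long collar in the $\theta$ direction, and if necessary by rescaling $\alpha_-$ by a function of $r$.
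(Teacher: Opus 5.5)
There is a genuine gap in Step 2. Your map $\phi_q(\theta,x,y)=(\theta,R_{q\theta}(x,y))$ is a diffeomorphism of the whole solid torus $N=S^1\times D^2$. On $\partial N$ it sends the meridian $\{\theta_0\}\times\partial D^2$ to itself; it is the longitude that becomes $\lambda+q\mu$. Regluing $N$ along $\phi_q|_{\partial N}$ is therefore the trivial surgery. The map that is the identity on $M\setminus N$ and $\phi_q$ on $N$ identifies the new manifold with $M$. Under this identification, your new pair of forms is just the old one conjugated on $N$ and interpolated on the shell $A$. No later step can then make the flow orbit equivalent to a Goodman--Fried surgery, which in general changes the manifold; this is exactly how the paper changes monodromies by Dehn twists in Theorem~\ref{thm:bundle plugs}. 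A genuine $(1,q)$ surgery must send the new meridian to $\mu+q\lambda$, with $\lambda$ the annulus framing, e.g.\ via $(\theta,\varphi)\mapsto(\theta+q\varphi,\varphi)$ on $\partial N$. That map does not extend over $N$. So ``pull back $\alpha_-$ and repair on a shell'' is no longer available, and you would have to build both forms on the new solid torus from scratch.

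The construction the paper describes (from \cite{Sal1}) avoids solid tori altogether. One cuts along an annulus with core $K$, swept out by arcs of flow-lines of $X$; it is tangent to $\xi_-$ along $K$ and transverse to $R_-$. The two copies are reglued by a shear $v\mapsto v+q\cdot K$ with a monotone profile $f(r)$. This realizes the $(1,q)$ Dehn surgery on $K$ relative to the annulus framing. Because the shear is along a Legendrian direction of $\alpha_-$ across an annulus transverse to its Reeb field, it is a Foulon--Hasselblatt type surgery for $(\alpha_-,R_-)$: the new negative form is contact, and its Reeb arcs are the reglued arcs $\rho'$. The essential remaining step is to deform $\xi_+$ so that the $\rho'$ arcs stay Legendrian for it. This is done by sliding along the $R_-$ direction, using the monotone rotation of $\xi_+$ along its Legendrian field $R_-$. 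You correctly flag this as the crux, and you note yourself that negativity of $\alpha_-'$ and positivity of $\xi_+'$ put opposite sign constraints on $g'$. But you leave that conflict unresolved (``spread the twist'', ``rescale if necessary''), so this step is not established even with the correct gluing. Likewise, the ``relative flexibility'' in Step 1 is not a standard result, and the orbit equivalence in Step 3 is only asserted.
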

 \begin{remark}
    The bicontact surgery preserves more structure compared to Goodman--Fried surgery. Anosovity is a consequence of being supported by a pair $(\alpha_-,\xi_+)$ with $R_-$ Legendrian Reeb.  
\end{remark}

 \begin{figure}[ht!]
     \centering
     \includegraphics[width=12cm]{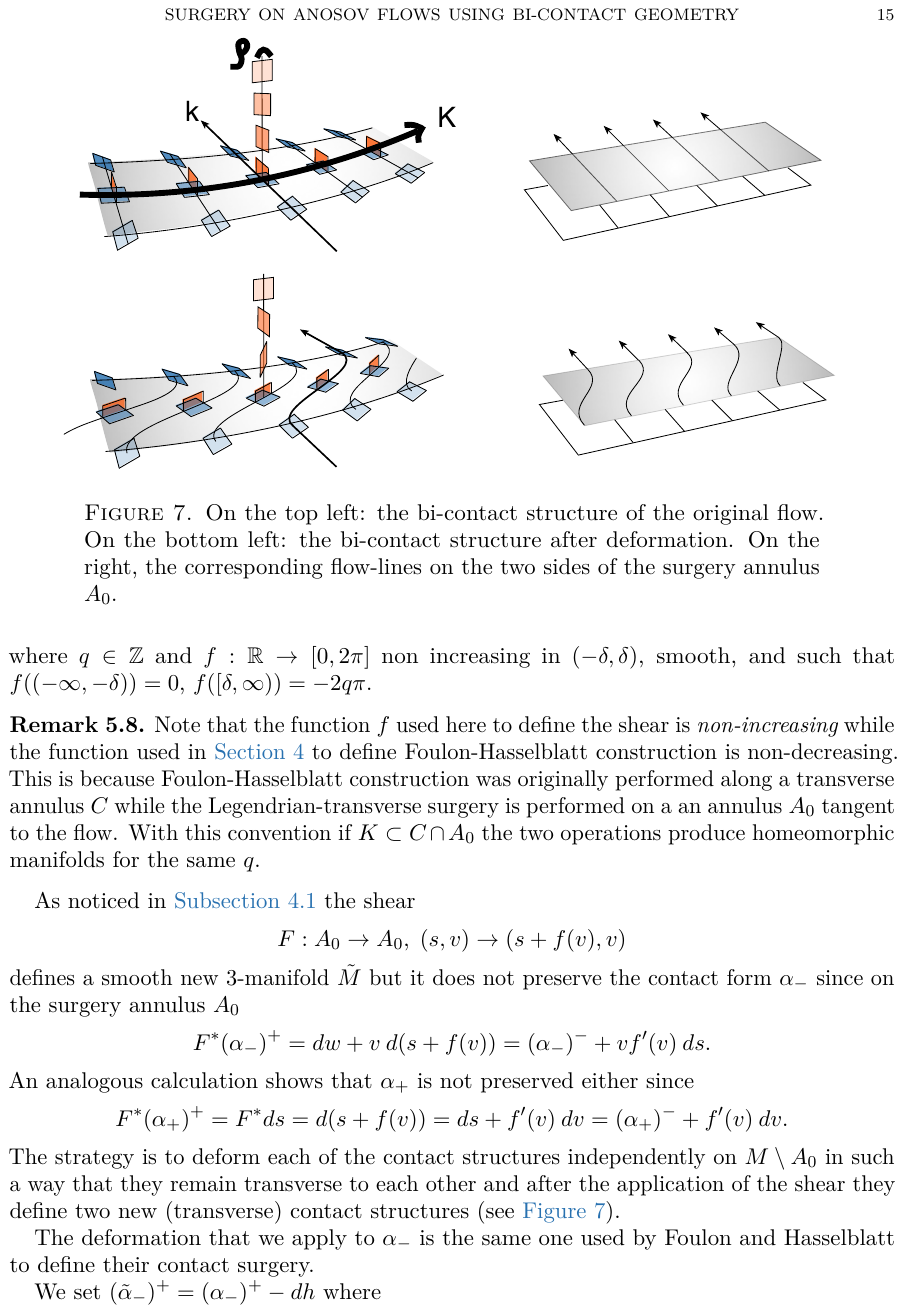}
     \caption{The action of the bicontact surgery on a neighborhood of the surgery annulus. In blue is the plane field corresponding to $
     \alpha_-$ and in red the plane field of $\xi_+$.}
     \label{fig:surgery}
 \end{figure}

{
The surgery acts in the following way, as depicted in Figure~\ref{fig:surgery} Open up the annulus into two parallel annuli sharing the boundary, and then glue the top to bottom in a different way. Let $s\times v\times w$ be coordinates in the original manifold, where $s$ is a parameter describing the core $K$ of the annulus and is glued to the core of the top annulus the same way as before the surgery. The arc $v$ in the bottom original annulus, along a flow-line of the Anosov flow connecting one boundary component to the other, is glued after the surgery the surgery to $v'=v+q\cdot K$ in the top annulus. As a result, half an arc parallel to $w$ below the annulus, which is transverse to the annulus and tangent to the Reeb field $R_-$ of the contact form $\alpha_-$ (blue in the figure) is cut open at the point where it hits the annulus, say $rv+\theta K$ where $0\leq r\leq 1$ and $s\in[0,2\pi]$, and is glued to another half $\rho$ arc starting upwards from $w+f(r)q\cdot K$ where $f(r)$ is a monotone function. The surgery topologically consists of inserting a disk bounded by the difference between flow-lines on the bottom and flow-lines on the top producing well defined flow-lines on a new manifold. 

In \cite{Sal1} it is proven that there is a deformation of the positive contact structure, so that the new
$\rho'$ arcs are still contained in the new red plane field $ker(\alpha_+)$ after the surgery, and so this results in a new strongly adapted bicontact field.
}

 \begin{remark}
     The bicontact surgery construction can be performed also along L-t knots on the boundary, as follows: If $K$ is an L-t closed curve on the boundary of $M$, since the Reeb flow $R_-$ is tangent to the boundary, and $\gamma$ is transverse to $\xi_+$, the Anosov flow-lines through $K$ must also be transverse to the boundary. In this case, only half of the surgery annulus that is tangent to the flow-lines as above is contained in $M$. Denote the half annulus by $A'$, then $\partial^+A'=K\in\partial M$. The surgery involves opening $A'$ to a wedge of two annuli joined along $\partial^- A'$, and then re-gluing the two half annuli so that a flowline $v$ in the ``bottom" boundary of the wedge will be glued to $v+q\cdot K$ with on the ``top" boundary. Note that we may allow surgeries with non-integer coefficients $q\in \mathbb{Q}$ for a boundary surgery.

 \end{remark}

 \begin{proposition}\label{pro:one intersection}
 Let $\gamma$ be a periodic orbit of a $\mathcal{QTP}$-boundary component of a $\mathcal{SAB}$-plug.
 There exists a rational surgery coefficient, so that the bicontact surgery along the L-t-push off of $\gamma$ results in an $\mathcal{SAB}$-plug for which $\gamma$ is an orbit of the Anosov flow, intersecting a new periodic Reeb orbit at a single point.
 \end{proposition}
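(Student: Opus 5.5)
We will work entirely on the boundary torus $B$, using the boundary version of the bicontact surgery described in the Remark above. Near a $\mathcal{QTP}$-boundary component $B$, both the supported flow $X^t$ and the Reeb field $R_-$ are tangent to $B$ along the closed orbits. Moreover, $R_-$ is periodic on $B$. This gives $H_1(B;\mathbb{Z})\cong\mathbb{Z}^2$ two distinguished classes: $[\gamma]$, the class of the closed orbit $\gamma$ (all $2n$ tangent orbits of $X^t$ on $B$ are parallel, so they share this class), and $[r]$, the class of a closed Reeb orbit. Since $R_-$ lies in $\xi_+$, it is transverse to $\xi_-$, while $X$ lies in $\xi_-$. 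Hence $R_-$ is transverse to $\gamma$ along $\gamma$, so $[r]\neq \pm[\gamma]$. The algebraic intersection number $m=\gamma\cdot r$ is nonzero, and since $R_-$ is everywhere transverse to $\gamma$, every Reeb orbit meets $\gamma$ exactly $|m|$ times, all with the same sign. The goal is to change $[r]$ to a class $[r']$ with $\gamma\cdot r'=\pm1$, keeping $\gamma$ an orbit of the supported flow.

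First, we take the L-t push-off $K$ of $\gamma$ along $R_-$. Because $R_-$ is tangent to $B$, the knot $K$ is a curve on $B$ parallel to $\gamma$, lying in one of the transverse annuli adjacent to $\gamma$. By the boundary Remark, we perform the surgery on the half annulus $A'$ with $\partial^+A'=K$. Second, we track what the surgery does to $R_-$ on $B$. Arcs of $R_-$ crossing the annulus at $rv+\theta K$ are reglued with a shift $f(r)q\cdot K$. Restricted to $B$, this is a fractional Dehn twist of the Reeb foliation along the curve $K$, parallel to $\gamma$, with total shift $q$ in the $K$-direction. Concretely, a Reeb orbit of class $[r]$ becomes a closed curve of class $[r]+ q\,m'[\gamma]$ for the appropriate normalization $m'$. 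This curve closes up exactly when $q$ is chosen rational with the right denominator, and the periodicity of the new $R_-$ on $B$ is then a consequence of the shift being rational. The intersection number with $\gamma$ is unchanged by adding multiples of $[\gamma]$, so to get $\pm1$ we must instead choose $q$ so that the new Reeb foliation on $B$ has closed leaves in a primitive class $[r']$ with $\gamma\cdot r'=\pm1$.

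Third, we treat the arithmetic. Writing $[r]=a[\gamma]+m[\delta]$ for a dual class $\delta$, the effect of the rational shift $q=p/s$ is to replace the rotation number of the first return map of $R_-$ to $\gamma$ by $a/m+q$. We choose $q$ so that $a/m+q=1/1$, or any integer; then the first return map of the new Reeb flow to $\gamma$ is conjugate to the identity shifted by an integer. Every new Reeb orbit then closes after one return, i.e. it intersects $\gamma$ exactly once. Finally, Salmoiraghi's theorem, in its boundary version, gives that the result is again strongly adapted, hence an $\mathcal{SAB}$-plug, and $\gamma$ itself is untouched since the surgery is supported near $K\neq\gamma$.

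The main obstacle is the second step. We must make precise that the boundary surgery acts on the Reeb flow on $B$ by an explicit change of the first return map to $\gamma$, namely adding $q$ to its rotation number. We also must ensure the resulting first return map is genuinely periodic of period one, not merely of rational rotation number. I would first try to normalize $\alpha_-$ near $B$ (using that $R_-$ is periodic there) so that the return map to $\gamma$ is a rigid rotation. Then the surgery shift $f(r)q$ composes to a rigid rotation by $a/m+q$, and choosing $q\in\mathbb{Q}$ with $a/m+q\in\mathbb{Z}$ finishes the argument.
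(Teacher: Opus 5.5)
Your proposal is correct and is essentially the paper's argument. The paper shears the Reeb segments crossing an annular neighborhood of $\gamma$ in $B$ to slope $-s/q$, where $s/q$ is your rotation number $a/m$ of the first-return map to $\gamma$, so that every new Reeb orbit closes after one crossing; your normalization of the return map to a rigid rotation is what makes the paper's single-orbit picture valid for all orbits. One correction: drop the claim in your second step that an old Reeb orbit becomes a single closed curve of class $[r]+qm'[\gamma]$, since for a non-integer shift the old orbit is cut into $m$ new orbits (as the paper notes), and it is your third-step return-map computation that actually carries the proof.
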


\begin{proof}
    The assumption that the orbit $\gamma$ is primitive, implies that it is a single curve passing through its $q$ intersection points with a single orbit $w$ of the Reeb flow. 
    
    Consider a regular annular neighborhood $\gamma\times[-\varepsilon,\varepsilon]$ of $\gamma$ within the boundary torus, where we fix the framing so that the $[-\varepsilon,\varepsilon]$ are parallel to the direction of the Reeb flow. In particular, the closed curve $w$  intersects the neighborhood along $q$ vertical segments, connecting the bottom boundary to the top one (where bottom to top coincide with the direction of the Reeb flow).
    
    Starting from a point $A$ of $w$ on the top boundary of the neighborhood and tracing $w$ upwards, it continues into the boundary torus, until it intersects the neighborhood again along a point $B$ in its bottom boundary, $s$ segments (or intersection points) further along $\gamma$.
    
    Define the surgery to take the vertical $w$ segment into a diagonal segment with slope $-s/q$. This implies that after the surgery there is a single segment of the curve $\tilde w$ connecting the point $B$ to the point $A$ within the neighborhood, while in the complement of the neighborhood the Reeb flow did not change and so the segment of $w$ connecting $A$ back to $B$ through the complement is also an orbit of the new Reeb flow.

    Thus, this surgery results in a new $\mathcal{SAB}$-plug with $\tilde w$ the periodic Reeb orbit intersecting $\gamma$ once as required.
    Note that the original Reeb orbit $w$ is divided into $q$ new Reeb orbits by this surgery.
\end{proof}

\begin{remark}
Let $q$ be the number of intersections between a boundary closed orbit $\gamma$ and a periodic orbit $w$ of $R_-$. After a $\frac{q-1}{q}$ surgery with $q=1+k$ bicontact surgery along a L-t push-off of $\gamma$ the new periodic orbit $\tilde{w}$ of $\tilde{R}_-$ intersects $\gamma$ once. Moreover the trajectories $\tilde{w}$ of the new Reeb vector field $R_-$ on the boundary belong to the class $[\tilde{w}]=[w]+(q-1)[\theta]$ (or equivalently $[\tilde{w}]=[w]+k[\theta]$). A surgery that modifies $R_-$ in such a way that $\tilde{R}_-$ intersects $\gamma$ once is not unique. Every surgery coefficient $\frac{q-1}{q}+h$ where $h\in \mathbb{Z}$ achieve a single point intersection.
\end{remark}

The following result is a generalization of Lemma 5.4 of \cite{Sal1} and it affirms that after an isotopy of $\xi_+$ that induces a path of strongly adapted bicontact structures, we can assume that near the boundary the bicontact structure takes a canonical form.
\begin{lemma}[Canonical 1-forms]
Let $B$ be a periodic boundary component with $2n$ periodic orbits and such that the orbits of $R_-$ intersect a closed orbit $\gamma$ exactly once. Consider in a collar neighborhood  $N(B)$ the coordinate system $(s,v,w)$ where $w$ is the parameter describing the flow-lines of $R_-$, $s$ is the parameter describing a closed orbit $\gamma$, where $s\in [0,2\pi]$ is pointing to the right of $R_-$, i.e. $w\times s$ is pointing outwards through the boundary and the $v$-curves $v\in [-\delta,0]$ are transverse to $B$, so that $v=0$ on $B$.
In these coordinates, the negative contact structure is negatively invariant, and has the form $$\alpha_-=dw+v\:ds.$$ 

Moreover, after an isotopy of $\xi_+$ that induces a path of orbit equivalent flows we can assume that the positive contact structure is described as the kernel of  $$\alpha_+=
\sin(n w)\:ds+\cos(n w)\:dv,$$ where $w$ is parametrized to go from 0 to $2\pi$. Note that the $n$ tangency points along a single Reeb orbit correspond each to exactly one of the $n$ tangent orbits of the supported flow.
\end{lemma}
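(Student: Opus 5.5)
The plan is to build the collar coordinates directly from the Reeb flow and the boundary orbit, put $\alpha_-$ into normal form by a Moser-type argument, and then deform $\xi_+$ by a convex interpolation that keeps $R_-$ Legendrian.

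\textbf{Coordinates and $\alpha_-$.} On $B$ the Reeb field $R_-$ is tangent and periodic, and each of its orbits meets the closed orbit $\gamma$ exactly once. So $\gamma$ is a global section for $R_-|_B$, and the first return map is a circle diffeomorphism. Since $R_-$ preserves $\alpha_-$, and $\alpha_-$ restricted to $B$ is nonvanishing on $R_-$, the return time is constant after rescaling. This gives coordinates $(s,w)$ on $B$ with $R_-=\partial_w$ and $\gamma=\{w=0\}$.

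I extend these coordinates to the collar $N(B)$ by flowing along a vector field $\partial_v$ transverse to $B$ that commutes with $R_-$; one can take it inside $\xi_-$ and Reeb-invariant. In such coordinates $\alpha_-$ is $w$-invariant, since $\mathcal L_{R_-}\alpha_-=0$. We have $\alpha_-(\partial_w)=1$ and $\iota_{\partial_w}d\alpha_-=0$, so $\alpha_-=dw+\beta$ with $\beta$ a $w$-independent closed-type form in $(s,v)$ whose differential is nondegenerate of the negative sign.

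Because $B$ is tangent to $R_-$ and the characteristic foliation of $\xi_-$ on $B$ is spanned by $\partial_s$ up to shear, we can arrange $\beta|_B=0$. A Moser/Darboux argument in the $(s,v)$ annulus, which is a 2-dimensional area-form normalization, then brings $\beta$ to $v\,ds$ with sign fixed by the orientation convention. The result is $\alpha_-=dw+v\,ds$.

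\textbf{The positive structure.} Legendrianity of $R_-=\partial_w$ in $\xi_+$ forces $\alpha_+(\partial_w)=0$. So $\alpha_+=a(s,v,w)\,ds+b(s,v,w)\,dv$, and the contact condition reduces to a rotation condition: the angle $\theta$ of $(a,b)$ must be strictly monotone in $w$ (namely $\partial_w\theta>0$ up to sign conventions).

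The supported flow is $X=\ker\alpha_-\cap\ker\alpha_+$. It is tangent to $B$ exactly where $b=0$ on $v=0$, i.e. where $\xi_+$ contains $\partial_s$. The $2n$ tangent orbits of the quasi-transverse torus, each meeting every Reeb orbit once, are therefore level sets $w=\text{const}$ along which $\theta$ passes through multiples of $\pi$. Since $\theta$ is monotone along each Reeb orbit of length $2\pi$, its total rotation is $n\pi$ per half.

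\textbf{Isotopy to the model.} Reparametrize $w$ (fiberwise in $s$, compatible with $\alpha_-$ up to a collar adjustment) so that the zeros of $b$ sit at the model positions. Then interpolate the angle functions
$$\theta_t=(1-t)\theta+t\,nw$$
together with the norm. Convex combinations of functions strictly increasing in $w$ remain strictly increasing, so every $\ker\alpha_+^t$ is a positive contact structure containing $\partial_w$. Hence $(\alpha_-,\xi_+^t)$ is a path of strongly adapted bicontact structures. Using a cutoff in $v$, we make the interpolation happen only in $N(B)$; the monotonicity in $w$ survives the cutoff because the cutoff depends only on $v$. The supported flows $X_t$ form a path of Anosov flows by Theorem~\ref{thm:Hoz}, so by structural stability they are orbit equivalent.

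\textbf{Main obstacle.} The main obstacle is reconciling the reparametrization of $w$ with the rigid normal form $\alpha_-=dw+v\,ds$, since the zeros of $b$ need not lie on constant-$w$ levels. My first attempt would be to absorb the discrepancy into $\xi_+$ alone. Within each Reeb orbit, the tangency positions can be moved by the same monotone-angle interpolation, because the zero set of $b$ is only a graph $w=g_j(s)$ and the model requires $w=j\pi/n$. If the interpolation fails to keep $B$ quasi-transverse, I would fall back on choosing $\gamma$ via Proposition~\ref{pro:one intersection} so that the tangencies are already levels.
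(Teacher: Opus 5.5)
Your normalization of $\alpha_-$ is essentially the paper's route: Reeb flow-box coordinates give $\alpha_-=dw+\beta$, and then $\beta$ is normalized. The paper does this by the substitution $v=a(s,u)$ rather than by Moser. Your criterion that $\ker(\rho\sin\theta\,ds+\rho\cos\theta\,dv)$ is positive contact iff $\partial_w\theta>0$ is also correct.

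\textbf{The gap.} It is exactly the point you flag as the main obstacle, and your remedies do not close it.
\begin{itemize}
\item Since $\alpha_-|_B=dw$, the $2n$ tangent orbits are $\xi_-$-Legendrian curves in $B$, hence automatically horizontal circles $w=w_j$. (Tangency occurs where $a=0$, not $b=0$.) So your worry about graphs $w=g_j(s)$ does not arise; the only issue is the spacing of the $w_j$.
\item That spacing cannot be fixed by reparametrizing $w$. A coordinate change preserving $\alpha_-=dw+v\,ds$ and $R_-=\partial_w$ only adds a function of $(s,v)$ to $w$, so it leaves the gaps $w_{j+1}-w_j$ unchanged.
\item If you run $\theta_t=(1-t)\theta+t\,nw$ anyway, then for intermediate $t$ the zero set of $\sin\theta_t$ on $B$ is in general a family of non-horizontal graphs. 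Since $X_t$ is parallel to $\partial_s$ there, these curves are not orbits. So $B$ stops being quasi-transverse mid-path, and the claim of a path of orbit equivalent flows has no support.
\item Theorem~\ref{thm:Hoz} and structural stability are statements about closed manifolds. On the plug they can only be used through a doubling, and the doubling needs the boundary to stay $\mathcal{QTP}$ along the whole path.
\item Proposition~\ref{pro:one intersection} is irrelevant here. It only produces the single-intersection hypothesis, which you already have.
\end{itemize}

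\textbf{The missing step.} It is the isotopy the paper invokes (Lemma 5.5 of \cite{Sal1}): slide $\xi_+$ along the Reeb lines instead of interpolating the angle linearly.
\begin{itemize}
\item Because $\theta(s,0,w_j)=j\pi$ for all $s$, the function $\theta-nw$ is well defined on the collar.
\item Let $\chi$ be a cutoff equal to $1$ near $v=0$, and set $\Phi_t(s,v,w)=\bigl(s,v,(1-t\chi(v))w+t\chi(v)\theta(s,v,w)/n\bigr)$. These maps are monotone in $w$, so they form a family of diffeomorphisms supported in $N(B)$ that preserve $B$ and every $R_-$-line.
\item Hence $(\Phi_t)_*\xi_+$ is positive contact and contains $\partial_w$, so it forms a strongly adapted pair with the unchanged $\alpha_-$.
\item At $t=1$ its angle function is $nw$ near $B$.
\item Its tangency locus on $B$ is $\Phi_t(\{w=w_j\})=\{w=(1-t)w_j+tj\pi/n\}$, which is horizontal for every $t$. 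So $B$ stays $\mathcal{QTP}$ with $2n$ tangent orbits throughout.
\end{itemize}
Equivalently, you can first precompose $\theta$ with circle diffeomorphisms in $w$ that move each $w_j$ to $j\pi/n$. After that, your linear interpolation keeps the tangency locus equal to $\{w=j\pi/n\}$ for all $t$.
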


\begin{proof}
    Since the boundary $B$ is periodic there is a collar neighborhood of $B=A\times S^1$ where $A$ is an annulus containing one periodic orbit $\gamma$ of the supported vector field and such that $\ker \alpha_-$ is tangent $A$ along $\gamma$ and the $S^1$-direction is realized by trajectories of the Reeb vector field of $\alpha_-$.  Since $\gamma$ is Legendrian we can choose a small enough $\tau$ such that the Reeb vector field $R_{\alpha_-}$ is transverse to $A$.  

\begin{figure}[ht!]
    \centering
    \includegraphics[width=0.7\textwidth]{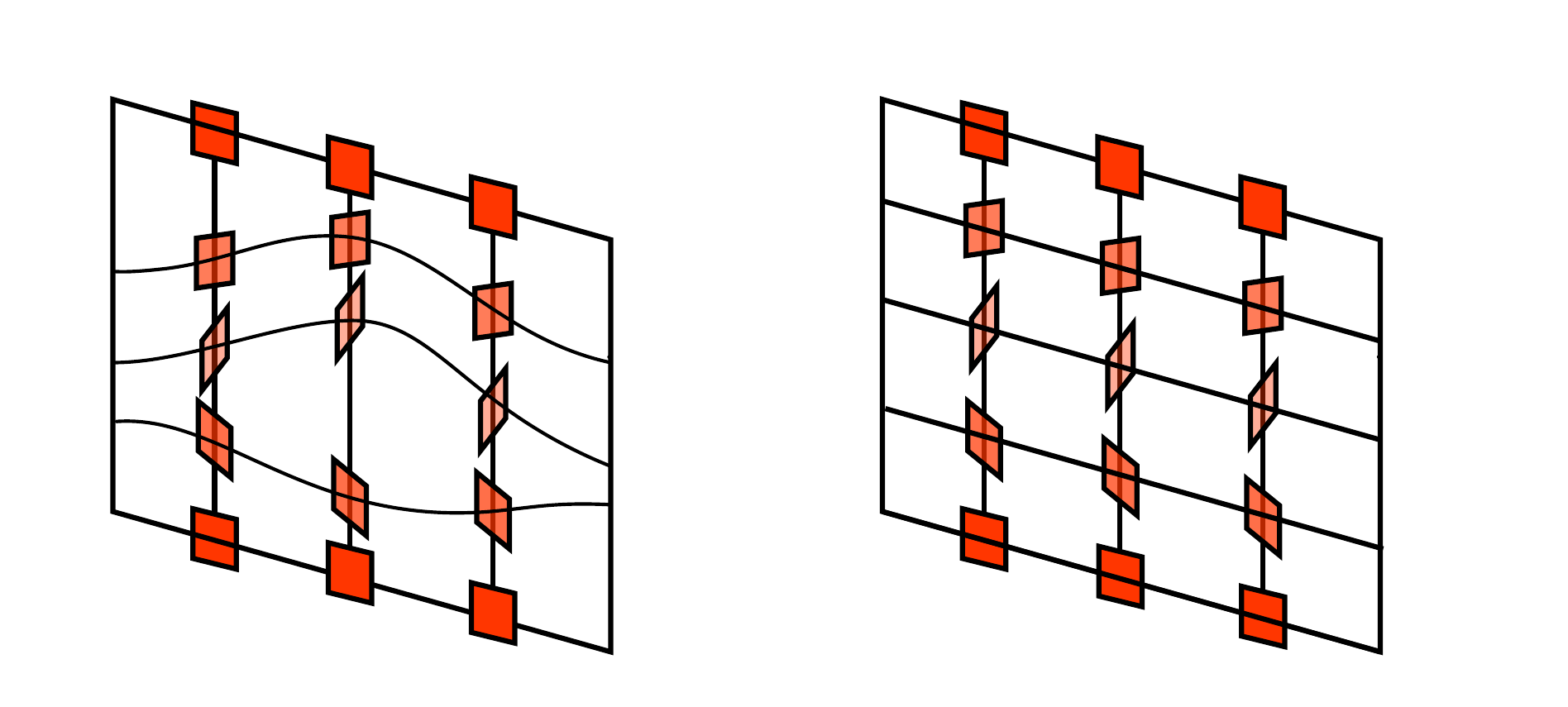}
    \caption{Isotopy of $\xi_+$ along the flow-lines of $R_-$. The black lines on the left are level curves of $\psi(s,v,w)$. }
    \label{fig:Fiso}
\end{figure}

Let $\phi^w$ be the flow of the Reeb vector field $R_{\alpha_-}$ of $\alpha_-$. There is an auxiliary coordinate system $(s,u,w)$ in a neighborhood of $A$ such that 
$$\alpha_-=dw+a(s,u)\:ds.$$
Since $\gamma$ is a Legendrian knot, we have $a(s,0)=0$ and since $\frac{\partial}{\partial u}a(s,u)\neq 0$, the transformation of coordinates $$(s,u)\rightarrow (s,a(s,u))=:(s,v)$$
is non singular and therefore we have (see \cite{FoHa1})
$$\alpha_-=dw+v\:ds.$$

    Since the periodic foliation is Legendrian for $\xi_-$ the contact structure $\xi_+$ monotonically rotates clockwise with $w$ (\cite{ElTh}). Since there are $2n$ boundary orbits, there are $2n$ curves where $\xi_+$ is tangent to $B$, we can write $\xi_+=\ker \alpha'_+$
    $$\alpha'_+=
\sin(n \psi(s,v,w))\:ds+\cos(n \psi(s,v,w))\:dv$$
in a subset of $B$ where $v\in [-\delta,0]$

By the contact condition $\psi(s,v,w)$ is increasing with the coordinate $w$ and we can apply the same argument of (see \cite{Sal1} Lemma 5.5 and Figure \ref{fig:Fiso}) to get
\[
\alpha_+=
\sin(n w)\:ds+\cos(n w)\:dv.\]

\end{proof}

\begin{proof}[Proof of Theorem \ref{thm: gluing plugs}]
Let $M$ be be the closed manifold obtained by gluing $M_1$ and $M_2$ along some compatible boundary component $B_1$ and $B_2$ using the coordinate systems $(w_i,s_i,v_i)$ in $N(B_i)$, $i=1,2$,
we glue $B_1$ to $B_2$ via the map $F$ that
acts by:
\[
\begin{array}{l}
      w_1\to w_2 +\pi\\
     s_1\to-s_2.
\end{array}
\]

Note that when approaching the boundary in the $v_1$ direction, this is glued to the direction $-v_2$ from the boundary inwards in $M_2$. The orientation of $(w_2,s_2,v_2)$ is equal that of $(w_2,-s_2,-v_2)$, and thus there is a global orientation on $M_1\cup_F M_2$ so that the positive (resp.  negative) structures are still positive (negative) as before.

\begin{figure}[ht!]
    \centering
    \includegraphics[width=0.6\textwidth]{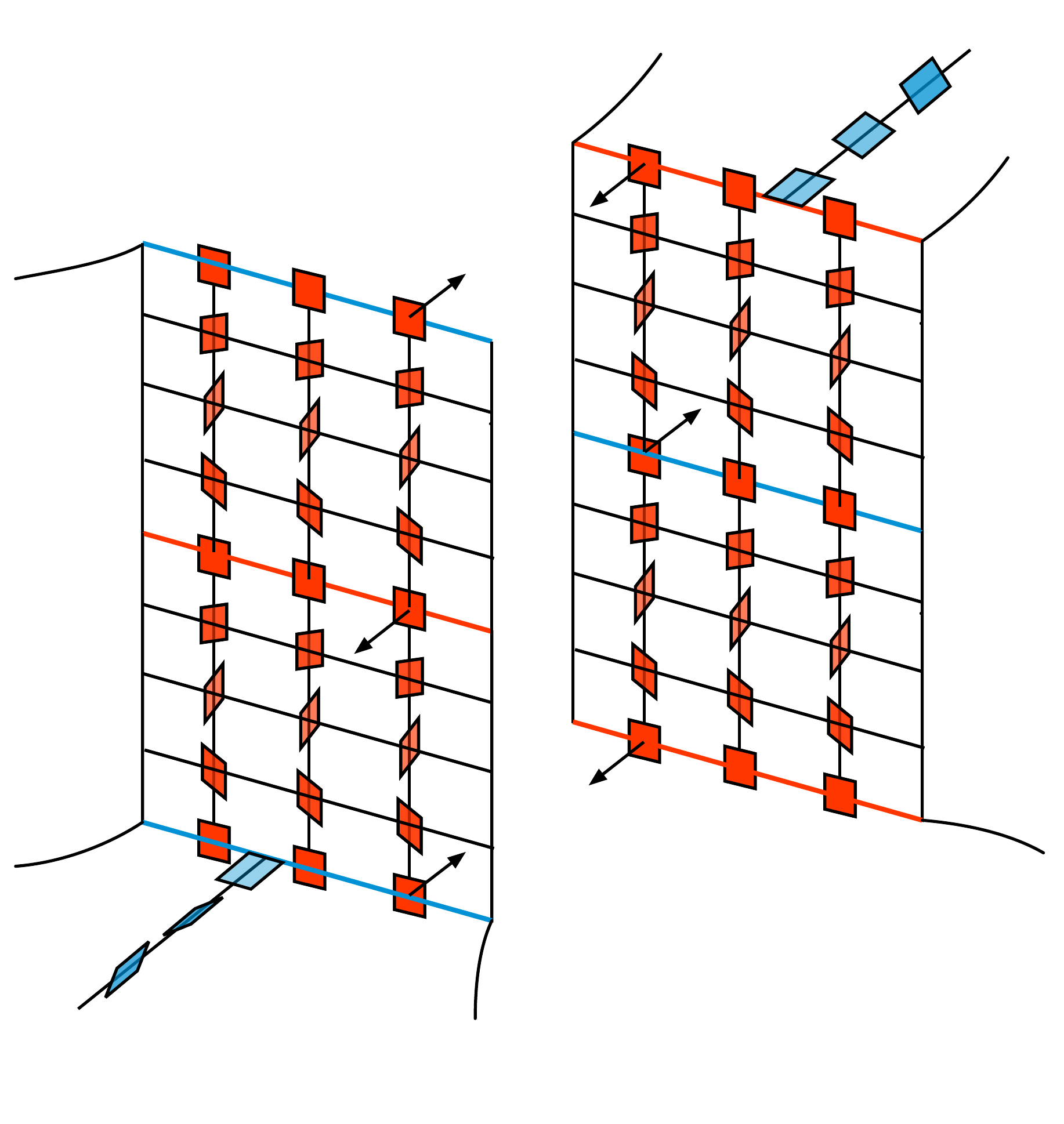}
    \caption{Gluing of two copies of a bicontact plug with quasi-transverse periodic boundary $B$. The gluing map $F:B_1\rightarrow B_2$ identifies points with vertical coordinate $w$ with points $w+\frac{\pi}{n}$. Here red curves are glued to red curves and blue curves to blue curves.}
    \label{fig:Gluing}
\end{figure}

The gluing map sends the 1-form $\alpha_-$ to the 1-form $\alpha_-'$, as gluing the $w_1$-curves with $w_2$-curves corresponds to gluing the flow-lines of the Reeb vector field $R^1_-$ in $M_1$ to the Reeb vector field $R^2_-$. Since we changed the signs of the $\sin$ and $\cos$, and also of $ds_2$ and $dv_2$, it also takes $\alpha_+$ to $\alpha_+'$.
Thus, $M= M_1\cup M_2$ is equipped with a strongly adapted bicontact structure $(\alpha_-,\alpha_+)$. Since the structure of the contact structures in the rest of the manifolds hasn't changed, $M$ is an $\mathcal{SAB}$-plug. If $M$ has empty boundary, 
the flow described by $\alpha_-=\alpha^1_-\cup\alpha^2_-$ and $\alpha_+=\alpha^1_+\cup\alpha^2_+$ is Anosov by \ref{thm:Hoz} (see Figure \ref{fig:Gluing}). 
\end{proof}

\begin{remark}
    On the level of the dynamical system, the gluing takes a tangent orbit to the antipodal tangent orbit (reversing its direction) and the outwards transverse annulus above this orbit to an inwards transverse annulus above its image orbit.
\end{remark}

\section{Strongly adapted bicontact structures on surface bundles over the circle }\label{Sec4}

In this section we show how to construct families of $\mathcal{SAB}$-plugs. We first build from scratch examples of $\mathcal{SAB}$-plugs with ambient manifold $M$ diffeomorphic to a trivial bundle $\Sigma_{g,b}\times S^1$. 
We define on $M$ a bicontact structure $\mathcal{B}^k=(\alpha_-^k,\xi_+^k)$ for any $k\in\mathbb{N}$, such that $\xi_+^n$ is isotopic to $\xi_+^m $ if and only if $n=m$.

\vskip .5 cm

Let $\Sigma_{g}$ be an orientable genus $g$ closed surface, for $g\geq 1$. Consider on $\Sigma_{g}$ a vector field $V$ with $b$ isolated singular points $v_i$ of index $\ind_{v_i}\leq 0$. By the Poincaré--Hopf theorem we have 
   
   $$\sum^b_{i=1} \ind_{v_i}(V)=2-2g.$$
   
   For every $1\leq i \leq b$ consider a disk $D_i$ containing $v_i$ and let $\Sigma_{g,b}= \Sigma_g \setminus \{D_1,\cdots,D_b\}$ and consider the restriction of the vector field $V$ to $\Sigma_{g,b}$. Represent $M=\Sigma_{g,b}\times S^1$ as a mapping torus
   $$M=(\Sigma_{g,b}\times I)/\sim_{Id},$$
   where $I=[0,2\pi]$ and $\sim_{Id}$ identifies a point $(x,2\pi)\in \Sigma_{g,b}\times\{2\pi\}$ with the point $(x,0)\in \Sigma_{g,b}\times\{0\}$. 
   
   On $M$ we can define a vertically rotating positive contact structure as follows: Consider a metric $g(\cdot,\cdot)$ on $\Sigma_{g,b}$ and let $W$ be a non vanishing vector field such that $W$ has an angle of $-\pi/2$ from $V$. Every point $p$ in the interior of $\Sigma_{g,b}$ is contained in a rectangular chart with coordinate system $(x,y)$ induced by the flow-lines of $V$ and $W$. Near a component of $\partial\Sigma_{g,b}$ we consider annular charts $A_i$ with coordinates $(r,\theta)_i$, where $\theta$ is a parametrization of the boundary component, and the $r$-curves are segments transverse to the boundary measuring the distance from the boundary components of $A_i$. 

   The charts on $\Sigma_{g,b}$ can be extended to charts on $\Sigma_{g,b}\times I$ with coordinates $(x,y,w)$ and $(\theta,r,w)$, where $w$ is the parameter along $I$ and the other coordinates define directions tangent to the surface. For any positive integer $k$, we can now construct a positively oriented contact form on $M$ defining it locally in each chart in the interior of $\Sigma_{g,b}$ as     
$$\alpha^k_+=\sin( k w)\:dx+\cos( k w)\:dy.$$

Near the boundaries the situation is as follows. Consider the section at $w=0$ along the $i^{th}$ boundary component of $\partial\Sigma_{g,b}$. Recall that the index $\ind_{v_i}$ of the vector field $V$ is non-positive. As one travels along the boundary counterclockwise, the vector field rotates $1-\ind_{v_i}$ times counterclockwise relative to the tangent direction. The same is true for $W$ as it rotates together with $V$ so as to never intersect it.

Thus, choosing the parametrization of the boundary component so that $\theta=0$ at a point where the $\theta$ curve is tangent to $W$, and denoting $h_i=\ind_{v_i}-1<0$,  we have at $w=0$:
\[
\alpha^k_+=dy=\sin(-h_i\theta)\:d\theta+\cos (-h_i\theta)\:dr
\]

Now, adding clockwise rotation along the vertical direction as $w$ increases, we have in a collar neighborhood of the $i^{th}$ boundary component:
\[
\alpha^k_+=\sin (k w-h_i\theta)\:d\theta+\cos(k w-h_i\theta)\:dr,\;\;\;w,\theta\in[0,2\pi].\]

Here $k\in \mathbb{N}$ is the rotation number of the plane field $\ker{\alpha_+}$ along the vertical direction.

\begin{figure}
    \centering
    \includegraphics[width=1\textwidth]{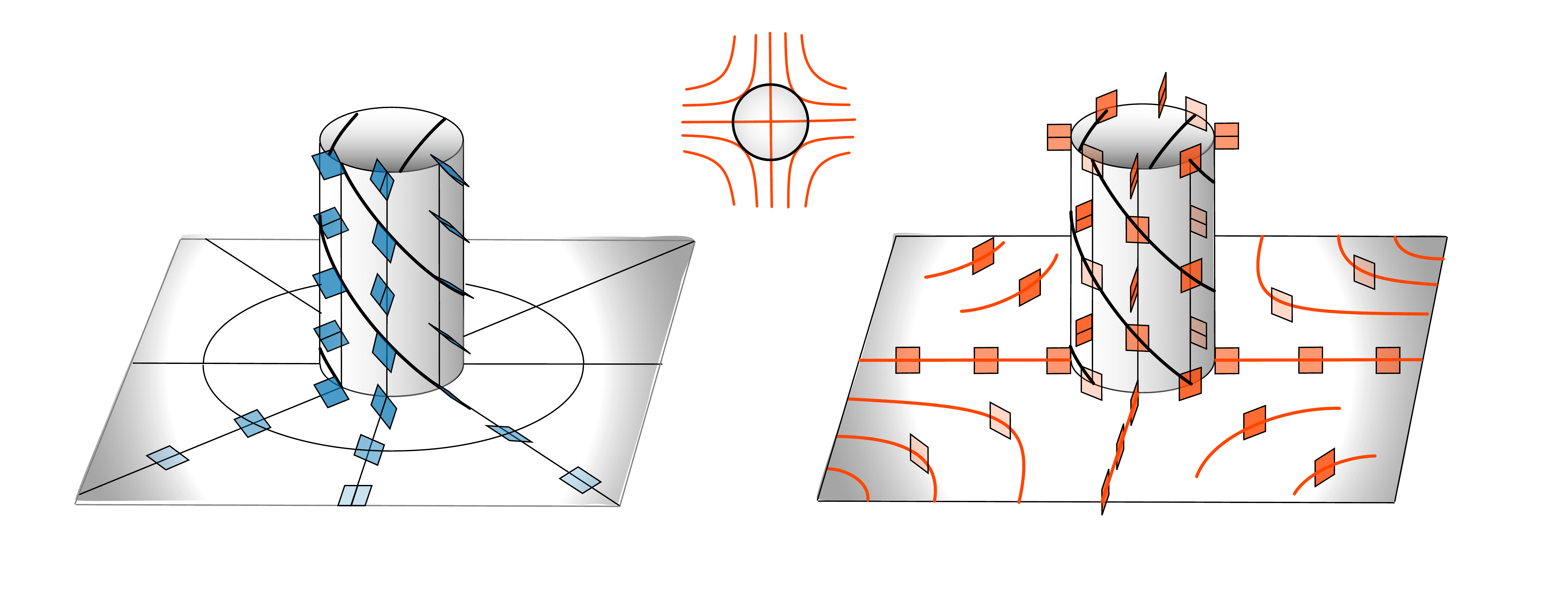}
    \caption{Bicontact structure near a boundary component of index $-1$. On the left the vertically invariant negative contact structure $\xi_-=\ker \alpha_-$. On the right, the vertically rotating positive contact structure. The horizontal surface is $\Sigma^0_{g,b}$ and the orange curves are flow-lines of $V$.}
    \label{fig:Singular.pdf}
\end{figure}

By construction, the trajectories of $V$ can be interpreted as the leaves of the characteristic foliation induced by $\xi_+=\ker \alpha_+$ on $\Sigma^0_{g,b}=\Sigma_{g,b}\times \{0\}=\Sigma^{2\pi}_{g,b}=\Sigma_{g,b}\times \{2\pi\}$.

\begin{proposition}\label{prop: bicontact plugs}
  For any natural number $k$ and a vector field $V$ with non-positive singularities on a closed surface  $\Sigma_g=\Sigma_{g,b}\bigcup_{i\in [1,\cdots,b]} D_i$, there exists a bicontact structure $\mathcal{B}^k_V$ on $M=(\Sigma_{g,b}\times I)/\sim_{Id}$, that is a $\mathcal{SAB}$-plug with $\mathcal{QTP}$-boundary. 
\end{proposition}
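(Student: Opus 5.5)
We first need the negative contact form. Choose a negative contact form $\alpha_-$ on $M$ that is vertically invariant and approximates the horizontal foliation by fibers $\Sigma_{g,b}\times\{w\}$. Concretely, take a $1$-form $\beta$ on $\Sigma_{g,b}$ with $d\beta$ a negative area form: $d\beta=-\omega$ for an area form $\omega$. Such a $\beta$ exists because $\Sigma_{g,b}$ has nonempty boundary, so $\omega$ is exact. Set
\[
\alpha_-=dw+\varepsilon\beta .
\]
Then $\alpha_-\wedge d\alpha_-=\varepsilon\, dw\wedge d\beta<0$ for the orientation $dw\wedge\omega$, after fixing the orientation convention so that $\alpha_+^k$ is positive. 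Its Reeb field is $R_-=\partial_w$, because $d\alpha_-=\varepsilon d\beta$ is horizontal and $\alpha_-(\partial_w)=1$. Hence the orbits of $R_-$ are the closed vertical circles $\{p\}\times S^1$, which makes $R_-$ periodic on every boundary torus. Near the $i$-th boundary component we choose $\beta$ to be $r\,d\theta$ (up to sign and scale), matching the canonical form $dw+v\,ds$ of the Canonical 1-forms Lemma.

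Next we check that $\xi_+^k=\ker\alpha_+^k$ is a positive contact structure. In an interior chart, $\alpha^k_+=\sin(kw)dx+\cos(kw)dy$, and
\[
\alpha_+\wedge d\alpha_+=k(\sin^2+\cos^2)\,dx\wedge dy\wedge dw\cdot(\pm1),
\]
which is nonzero of constant sign. The sign is positive once the orientation of $(x,y)$ is fixed via $W$ being at angle $-\pi/2$ from $V$. The same computation with phase $kw-h_i\theta$ works in the collar charts. We also check that the local definitions patch to a global plane field. For this, note that $\ker\alpha_+^k$ at height $w$ is the plane spanned by $\partial_w$ and the horizontal line obtained by rotating the line field $\mathbb{R}V$ by the angle $kw$. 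This description is chart-independent, so $\xi_+^k$ is globally defined (the $1$-forms agree up to positive rescaling on overlaps). The identification at $w=0$ and $w=2\pi$ is consistent because $k$ is an integer. Since $\partial_w$ lies in every plane of $\xi_+^k$, the field $R_-$ is Legendrian for $\xi_+^k$. This is exactly the strongly adapted condition: $(\alpha_-,\xi_+^k)$ is an $\mathcal{SA}$ pair.

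The remaining step is transversality of $\xi_-$ and $\xi_+^k$ together with the $\mathcal{QTP}$ boundary behaviour, and this is where I expect the main difficulty.

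1. \emph{Transversality.} $\xi_-$ is nearly horizontal, while $\xi_+^k$ contains $\partial_w$, so their intersection is a line field. It is close to the rotating horizontal line $\rho_{kw}(\mathbb{R}V)$, so for small $\varepsilon$ the two planes never coincide. This gives a nonvanishing supported field $X$.

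2. \emph{Boundary behaviour.} On the $i$-th torus $\{r=0\}$ the plane $\xi_+^k$ is tangent to the torus exactly where $\cos(kw-h_i\theta)=0$. This happens along $2(k-h_i)$ curves, each winding so that $kw-h_i\theta$ is constant. Along these curves $X$ is tangent to the boundary. In the complementary annuli, $X$ is transverse to the boundary, pointing alternately inward and outward according to the sign of $\cos$. We must check that these tangency curves are genuinely orbits of $X$, and that the boundary is quasi-transverse in the sense of Handel--Thurston. The direction of $X$ along the tangency curves is governed by $\beta$ restricted to the collar. If needed, we adjust $\beta=r\,d\theta$ near the boundary (or apply the isotopy of $\xi_+$ along $R_-$-lines used in the Canonical 1-forms Lemma) so that each tangency curve is exactly a closed Legendrian orbit of $X$.

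This yields an even number $2n=2(k-h_i)$ of tangent orbits, and the periodic $R_-$ gives the $\mathcal{QTP}$ property, completing the proof.
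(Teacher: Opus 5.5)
Your overall scheme matches the paper's: a Thurston--Winkelnkemper form $\alpha_-=dw+\beta$ with $R_-=\partial_w$ lying in the vertically rotating $\xi_+^k$. There is a genuine gap, though. The boundary step, which you rightly call the crux, is left as ``adjust if needed'', and the concrete choice you do make fails for two reasons.

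First, $\beta=r\,d\theta$ near every $B_i$ with $r=0$ on $B_i$ is impossible. It forces $\beta|_{\partial\Sigma_{g,b}}=0$, so Stokes gives $\int_{\Sigma_{g,b}}d\beta=\int_{\partial\Sigma_{g,b}}\beta=0$, contradicting that $d\beta$ is an area form.

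Second, even locally this choice gives $\alpha_-|_{B_i}=dw$, so $\xi_-\cap TB_i=\langle\partial_\theta\rangle$. The tangency curves of $\xi_+^k$ are level sets of $\phi=kw-h_i\theta$, with direction $\partial_\theta+\tfrac{h_i}{k}\partial_w$ and $h_i<0$. At a tangency point $X$ is then parallel to $\partial_\theta$, hence transverse to the tangency curve. Orbits cross it from an inward annulus into an outward one, so $B_i$ is not quasi-transverse.

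Your fallback, isotoping $\xi_+$ along $R_-$, cannot repair this. The horizontal line $\xi_+\cap T\Sigma$ has degree $\pm2h_i\neq0$ relative to $\partial_\theta$ along every $\theta$-circle, and this is a homotopy invariant. So every $\theta$-circle meets the tangency set, which can therefore never consist of leaves of $\ker dw|_{B_i}$. The canonical form $dw+v\,ds$ does not support your choice either: there $s$ runs along the tangent orbit $\gamma$, not along $\theta$.

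The missing idea is the paper's shifted primitive $\beta=(r-h_i/k)\,d\theta$ near $B_i$. Then $\alpha_-|_{B_i}=dw-\tfrac{h_i}{k}d\theta$ annihilates $\partial_\theta+\tfrac{h_i}{k}\partial_w$. Both planes therefore contain the tangent line of each tangency curve, and these curves are closed orbits. Explicitly, at $r=0$ one finds $X=\cos\phi\,(\partial_\theta+\tfrac{h_i}{k}\partial_w)-\sin\phi\,\partial_r$, so $dr(X)=-\sin\phi$ changes sign across each of them.

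This choice is compatible with Stokes. For the orientation induced by $d\beta=dr\wedge d\theta$, one gets $\int_{\partial\Sigma_{g,b}}\beta=-\tfrac{2\pi}{k}\sum_i h_i=\tfrac{2\pi}{k}(2g-2+b)>0$ by Poincar\'e--Hopf. This boundary condition pins down $\beta|_{\partial\Sigma_{g,b}}$, so your $\varepsilon$ is not free. Transversality never needed it anyway, since $\partial_w\in\xi_+^k$ while $\alpha_-(\partial_w)=1$.

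Two smaller corrections:
\begin{itemize}
\item $\xi_+^k$ is tangent to $B_i$ where $\sin\phi=0$, since there $\alpha_+^k=\pm dr$. Where $\cos\phi=0$ the plane contains $\partial_r$ instead.
\item The number of tangent orbits is $2\gcd(k,h_i)$, not $2(k-h_i)$; for example, it is $2$ rather than $6$ for $k=1$, $h_i=-2$ (cf.\ Remark~\ref{rem:orbit slopes}).
\end{itemize}
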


 \begin{proof}
    Following Thurston-Winkelnkemper \cite{ThurstonWinkelnkemper1975}, we construct a contact form $\alpha^k_-$ defining a negative contact structure that is vertically invariant, i.e., such that $R_-=\frac{\partial}{\partial w}$.
 
 Near a point $p$ of the interior of $\Sigma_{g,b}$ chose
 $$\alpha^k_-=dw+\beta,$$ where $\beta$ is a 1-form on $\Sigma_{g,b}\times \{w\}$ that does not depend on $w$ and such that $d\beta$ is an area form. In a collar neighborhood $N_i$ of a connected component $B_i$ of $\partial M$ we may choose $\beta=(r-h_i/k)\:d\theta$ and define 
$$\alpha^k_-=dw+(r-h_i/k)\:d\theta,\;\;\;r\in[-\delta,0].$$

Note that since $\alpha^k_+(\frac{\partial}{\partial w})=0$, i.e. the vertical direction is always contained in the plane field $\ker \alpha^k_+$ as defined above, the pair of contact forms $(\alpha^k_-,\alpha^k_+)$ defines a strongly adapted bicontact structure.

When we choose the same $k$ for both structures, the tangency lines of $\alpha^k_+$ on $B_i$ are contained in $\alpha^k_-$. Thus, the boundary component $B_i$ is quasi-transverse and periodic. 
 \end{proof}

Note that the bicontact structures constructed in Proposition \ref{prop: bicontact plugs} satisfy the following properties
\begin{enumerate}
    \item $\xi_+^n$ is isotopic to $\xi_+^m$ if and only if $n = m$;
    \item the Reeb vector field $R_-^k$ of $\alpha_-^k$ is periodic and transverse to $\Sigma_{g,b}^w := \Sigma_{g,b} \times \{w\}$ for every $w \in [0, 2\pi]$;\footnote{This condition ensures that $\Sigma_{g,b}^w$ is convex in the sense that there exists a contact vector field $v = R_-$ transverse to $\Sigma_{g,b}^w$.}
    \item the contact structure $\xi_-^k$ is transverse to $\partial \Sigma_{g,b}^w$ for every $w \in [0, 2\pi]$ and can be isotoped to be arbitrarily close to $\Sigma_{g,b}^w$ away from $\partial \Sigma_{g,b}^w$;
    \item the characteristic foliation induced by $\xi_+^k$ on $\Sigma_{g,b}^0$ is directed by $V$;
    \item each boundary component is quasi-transverse with $2n$ boundary orbits of $X^t$ and foliated by closed orbits of $R_-^k$.
\end{enumerate}

    \begin{remark}
        Note that on $B_i$ we have $r=0$. Therefore $\alpha^k_-|_{\partial N_i}=dw-h_i/k\:d\theta$. The characteristic foliation induced by $\ker \alpha^k_-$ on $B_i$ is directed by $u=\frac{\partial}{\partial \theta}+\frac{h_i}{k}\frac{\partial}{\partial w}$, therefore its leaves are lines with negative slope, parallel to lines satisfying the equation $w=\frac{h_i}{k}\:\theta$.
    \end{remark}

\begin{remark}\label{rem:orbit slopes}
Let $\gamma$ be a closed orbit on $B_i$ such that $[\gamma]=p[w]+q[\theta]$ where $[w]$ represents the homology class of the (periodic) orbits of $R_-$ while $[\theta]$ represents the class of the $i^{th}$-boundary component of $\Sigma_{g,b}$. Note that $\frac{p}{q}=\frac{h_i}{k}$ is the slope of $\gamma$ and we have $p=\frac{h_i}{\gcd(k,h_i)}$ and $q=\frac{k}{\gcd(k,h_i)}$. Moreover, it follows that $B_i$ contains exactly $2\gcd(k,h_i)$ orbits of the flow defined by $(\alpha_-^k,\alpha_+^k)$.

Thus, the $w$-curves intersect $\gamma$ once on $B_i$ if and only if $q=\frac{k}{\gcd(k,h_i)}=1$, therefore if and only if $k|h_i$. In this case we have $2\gcd(k,h_i)=2k$ closed orbits of $X^t$ on $B_i$. 
\end{remark}

   Given the quasi-transverse periodic torus $B_i$, proposition \ref{pro:one intersection} ensures that there is a Legendrian-transverse surgery along a L-t push-off of a closed orbit $\gamma$ such that the new $B_i$ is a quasi-transverse periodic torus such that the trajectories of the new Reeb vector field $\tilde{R}_-$ intersect $\gamma$ at a single point.   

\vskip .5cm
Let $[\gamma]=p[w]+q[\theta]$ with $\gamma$ a closed orbit of $X^t$. A curve $[c]=r[w]+s[\theta]$ intersects $\gamma$ once if and only if $|qr-ps|=1$.
 Call $\tilde{w}$-curves the trajectories of $\tilde{R}_-$. The $w$-curves and the $\tilde{w}$-curves coincide far from the boundary while on the boundary we have $[\tilde{w}]=r[w]+s[\theta]$. Here $s$ is as in the proof proposition \ref{pro:one intersection} and $r$ can be chosen to satisfy the relation $qr-ps=1$.

   \begin{remark}
    The new manifold $\tilde{M}$ obtained by performing a surgery as in proposition 4.5 is diffeomorphic to $M$.
Call $w$-curves the trajectories of $\tilde{R}_-$. The $w$-curves and the $\tilde{w}$-curves coincide far from the boundary while on the boundary instead we have $[\tilde{w}]=r[w]+s[\theta]$. In the coordinate system $(w,\theta)$ the boundaries of the fibers of the surface fibration satisfy $\tilde{\Sigma}^w_{g,b}=\tilde{\Sigma}_{g,b}\times \{w\}$, and coincide with the $\theta$-curves. 
\end{remark}

\begin{remark}\label{rmk:relation to geodesic flows}
Suppose we choose a periodic or trivial monodromy, so that our process results in an Anosov flow on a Seifert fibered manifold. Theorem~\ref{thm: gluing plugs} proves this piece can be glued to itself, producing an Anosov flow on a closed manifold that is either Seifert fibered or a graph manifold. In both cases it follows from the work of Barbot \cite{Barbot96} and Barbot Fenley \cite{BaFe4} that each piece must be a cover of a geodesic flow (since the same piece is glued to itself and the boundary orbits are not homotopic to a fiber, the flow must consist of so called `free pieces without blow-ups').
On the other hand, let $\Sigma_{g,b}$ be a genus $g\geq 1$ surface equipped with a metric of negative curvature and $b$ geodesic boundary components. The geodesic flow on the unit tangent bundle $T^1\Sigma_{g,b}$ is defined by a bicontact structure described in \cite{FHV2}, that seemingly coincides with the structure $\mathcal{B}^1$ on the $\mathcal{SAB}$-plug $\mathcal{M}^1_V$ for some vector field $V$ on $\Sigma_{g,b}$. 
Cutting along a close curve tangent to a periodic orbit of a section for a torus implies the construction yields also the geodesic flow for a 3-punctured sphere.
The degree $k$ cover is thus given by $\mathcal{M}^k_V$. Thus it seems that any cover of a geodesic flow will arise from our construction, and vice versa, all Anosov flows arising from our plugs in Seifert fiber spaces are covers of geodesic flows.
\end{remark}

\section{Modifying the monodromy by bicontact surgery}

In this section we show how to modify the monodromy of the mapping torus associated to $\mathcal{M}^k_V$ by bicontact surgery along a special family $\Gamma$ of closed orbits of $X^t$. The key step is to realize a class of curves that belong to a fiber $\Sigma^w_{g,b}$ simultaneously as Legendrian curves for $\ker \alpha_-$ and $\xi_+$. Curves of this type are called biLegendrian and are exactly the closed orbit of the flow supported by the bicontact structure, allowing us to preform surgeries along them. We must first introduce some background in convex surfaces theory introduced by Giroux (see \cite{Gir01}).

\begin{definition}
    A vector field $v$ in a contact $3$-manifold $(M,\xi)$ is \textbf{contact} if $v$ preserves $\xi$.
\end{definition}

\begin{remark}
    The Reeb vector field $R_-$ of $\alpha_-$   is a special contact vector field: it preserves not just the contact structure $\ker \alpha_-$ but also the contact form $\alpha_-$. Since it is Legendrian for $\xi_+$ it nowhere preserves $\xi_+$.
\end{remark}

\begin{definition}
    An embedded closed surface $\Sigma$ in a contact manifold $(M,\xi)$ is convex \cite{Gir01} if there is a contact vector field $v$ transverse to $\Sigma$.
\end{definition}
In other words there is a neighborhood $N=\Sigma\times I$ such that the contact structure $\xi$ is invariant in the $I$ direction. The behaviour of the contact structure in $N$ is described by a surprisingly simple piece of information: 

\begin{definition}
The dividing set is the multi-curve  $\Gamma_\Sigma=\{x\in \Sigma \:|\:v(x)\in \xi\} $, i.e. the set of curves where the contact vector field is contained in the contact structure. 
\end{definition}

\begin{definition} 
\label{defn:non isolating curve}
An embedded graph $G$ on a convex surface $\Sigma$ is \textbf{non-isolating} if every connected component of $\Sigma\setminus c$ intersects $\Gamma_{\Sigma}$.

\end{definition}

\begin{theorem}[Legendrian Realization Principle \cite{Et04}]
\label{LRP}
Let $\Sigma$ be an embedded convex surface in a contact manifold $(M,\xi)$. Let $c$ be an embedded closed curve that is non-isolating. Then there is an isotopy $\psi_s \colon \Sigma \rightarrow M,\; s\in [0,1]$ such that
\begin{itemize}
    \item $\psi_0=id,\; \psi_s|\Gamma_{\Sigma}=id$ ($\psi_s$ is fixed on the dividing set),
    \item $V$ is transverse to $\psi_s(\Sigma)$ (therefore $\psi_s(\Sigma)$ is convex for all $s\in [0,1]$),
    \item $\psi_1(\Gamma_{\Sigma})=\Gamma_{\psi_1(\Sigma)}$
    \item $\psi_1(c)$\; is\; a Legendrian simple closed curve.

\end{itemize}
\end{theorem}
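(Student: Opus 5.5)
This is the standard Legendrian Realization Principle of Giroux and Honda; here I sketch how I would prove it with convex surface theory. One caveat: the non-isolating hypothesis is stated for a curve $c$ in Theorem~\ref{LRP}, but Definition~\ref{defn:non isolating curve} is phrased for a graph $G$. The argument below works for either.

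The plan has two stages. First, change the characteristic foliation of $\Sigma$, keeping the dividing set fixed, to one in which $c$ is a union of leaves and singular points. Second, apply Giroux's flexibility theorem to realize that foliation by an isotopy of $\Sigma$ through surfaces transverse to the contact vector field $v$ (denoted $V$ in the statement).

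\textbf{Step 1 (reduction to the vertically invariant model).} Using the transverse contact vector field $v$, I identify a neighborhood of $\Sigma$ with $\Sigma\times\mathbb{R}$, where $v=\partial_t$ and the contact form is $\alpha=\beta+u\,dt$. Here $\beta$ is a 1-form and $u$ a function on $\Sigma$, both independent of $t$, and $\Gamma_\Sigma=\{u=0\}$. Giroux flexibility then says the following. Any singular foliation $\mathcal{F}$ on $\Sigma$ that is \emph{divided} by $\Gamma_\Sigma$ is the characteristic foliation of a graph $\psi_1(\Sigma)=\{t=\varphi(x)\}$. Divided means there is an area form $\omega$ and a vector field $Y$ directing $\mathcal{F}$ with $\mathcal{L}_Y\omega\neq 0$ on $\Sigma\setminus\Gamma_\Sigma$ (positive on $\Sigma_+$, negative on $\Sigma_-$), and $Y$ transverse to $\Gamma_\Sigma$, pointing out of $\Sigma_+$. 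The isotopy $\psi_s(x)=(x,s\varphi(x))$ keeps every $\psi_s(\Sigma)$ transverse to $\partial_t$. Since $\Gamma$ is defined by $u=0$ and $u$ does not depend on $t$, the dividing set is carried along, giving $\psi_1(\Gamma_\Sigma)=\Gamma_{\psi_1(\Sigma)}$. I would first modify $\varphi$ to vanish near $\Gamma_\Sigma$, so that $\psi_s$ is literally fixed there. This uses the fact that one can choose $\mathcal{F}$ to agree with the original foliation near $\Gamma_\Sigma$.

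\textbf{Step 2 (building the divided foliation containing $c$).} This is the main obstacle. I would construct the foliation region by region. Isotope $c$, rel nothing on $\Gamma$, so that it meets $\Gamma_\Sigma$ transversely and minimally; each arc of $c\setminus\Gamma_\Sigma$ then lies in $\Sigma_+$ or $\Sigma_-$. On $\Sigma_+$ I want a vector field $Y$ that expands the area form and is tangent to $c$ with singularities on $c$.
\begin{itemize}
\item For an arc of $c$ in $\Sigma_+$, place sources (elliptic points) along $c$ with hyperbolic points between them.
\item For a closed component of $c$ inside $\Sigma_+$, do the same along a circle.
\end{itemize}
Flow everything outward toward $\Gamma_\Sigma$ and do the mirror construction on $\Sigma_-$ with sinks.

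The non-isolating hypothesis enters exactly here. An expanding vector field on a region $R$ whose boundary consists of pieces of $c$ that are leaves must have its flow exit somewhere. That is possible only if every component of $\Sigma\setminus c$ reaches $\Gamma_\Sigma$, where $Y$ can leave transversely. A component missing $\Gamma_\Sigma$ would be a compact region with invariant boundary and positive divergence, contradicting Stokes' theorem.

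The concrete construction I would try first is as follows.
\begin{itemize}
\item Take a handle decomposition of each component $R$ of $\Sigma_\pm\setminus c$ relative to $\partial R\cap\Gamma$. This is possible because $\partial R\cap\Gamma\neq\emptyset$.
\item Let $Y$ be the gradient-like field of a Morse function $f$ on $R$ that has minima on $c$ and equals its maximum along $\Gamma$.
\item Modify $Y$ by the standard trick $Y'=Y+\epsilon\,(\text{radial})$ so that it becomes divergence-positive for a suitable area form.
\end{itemize}

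Finally, glue the pieces along $\Gamma_\Sigma$, where all of them are transverse to $\Gamma$. Step 1 then produces $\psi_1$ with $\psi_1(c)$ tangent to the characteristic foliation, that is, Legendrian.
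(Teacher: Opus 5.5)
The paper does not prove this statement; it quotes it from the literature. Your two-stage plan is the standard Kanda--Honda argument: build a singular foliation $\mathcal F$ divided by $\Gamma_\Sigma$ in which $c$ is a union of leaves and singular points, then apply Giroux flexibility. You also use non-isolation in the right place. Two steps, however, fail as written.

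\textbf{Moving $c$ relative to $\Gamma_\Sigma$.} You may not isotope $c$ across $\Gamma_\Sigma$, and certainly not into minimal position.
\begin{itemize}
\item Since $\psi_s$ fixes $\Gamma_\Sigma$ pointwise, $\psi_1(c)\cap\Gamma_{\psi_1(\Sigma)}=c\cap\Gamma_\Sigma$. Making a different representative of $c$ Legendrian proves a different statement.
\item Minimal position can destroy the hypothesis your Step~2 needs. Suppose all of $\Gamma_\Sigma$ lies on one side of a separating curve. Pushing a finger of the curve across a dividing curve gives a transverse, non-isolating curve, yet its minimal-position representative is isolating.
\end{itemize}
The right move is to take $c\pitchfork\Gamma_\Sigma$ as a hypothesis; it is part of the standard definition of non-isolating. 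It is also necessary. In the invariant model $\alpha=\beta+u\,dt$, the contact condition gives $\beta\wedge du\neq 0$ on $\{u=0\}$. So along the dividing set the characteristic foliation is nonsingular and transverse to it. A Legendrian $\psi_1(c)$ must therefore cross $\psi_1(\Gamma_\Sigma)$ transversely, while $\psi_1$ preserves any tangency between $c$ and $\Gamma_\Sigma$. With the paper's definition, the statement as printed is false for curves tangent to $\Gamma_\Sigma$.

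\textbf{Giroux flexibility and vertical graphs.} Flexibility does not realize $\mathcal F$ by the vertical lift $x\mapsto(x,\varphi(x))$.
\begin{itemize}
\item That lift pulls $\alpha$ back to $\beta+u\,d\varphi$, which equals $\beta$ on $\Gamma_\Sigma$. So no vertical graph changes the characteristic foliation along $\Gamma_\Sigma$.
\item Your $\mathcal F$ must be tangent to $c$ at the points of $c\cap\Gamma_\Sigma$, generally in a direction other than $\ker\beta$. Your fallback, taking $\mathcal F$ equal to the original foliation near $\Gamma_\Sigma$, is never arranged in Step~2.
\item There are obstructions away from $\Gamma_\Sigma$ too: a closed component of $c$ in $\Sigma_+$ would need $\int_c\beta/u=0$.
\end{itemize}
The theorem actually provides an isotopy with $\psi_s|_{\Gamma_\Sigma}=\mathrm{id}$ and $\psi_s(\Sigma)\pitchfork v$, such that the characteristic foliation of $\psi_1(\Sigma)$ equals $\psi_1(\mathcal F)$. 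In general $\psi_1$ is a vertical lift composed with a nontrivial diffeomorphism of $\Sigma$ fixing $\Gamma_\Sigma$. Cited in that form, it gives all four conclusions. In particular, $\psi_1(\Sigma)$ is a graph through $\Gamma_\Sigma\times\{0\}$, so it meets $\{u=0\}$ exactly there, and $\Gamma_{\psi_1(\Sigma)}=\psi_1(\Gamma_\Sigma)$.

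\textbf{A smaller fix in Step~2.} Do not add a radial term to $Y$; that destroys tangency to $c$.
\begin{itemize}
\item Build $Y$ first on a neighbourhood of $c$, so the pieces on the two sides of $c$ match. Then extend it as a gradient-like field of $f$.
\item Change the area form instead of the field. For $\omega=e^{Nf}\omega_0$ one has $\operatorname{div}_\omega Y=\operatorname{div}_{\omega_0}Y+N\,Y(f)$. This is positive for large $N$ once the sources and saddles are given expanding local models.
\end{itemize}
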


The Legendrian Realization Principle in the above form applies to closed embedded convex surfaces. However, its use can be extended to our context. Let $\Sigma_{g,b}$ be a surface with boundary and consider  $M=\Sigma_{g,b}\times S^1$, the ambient manifold of $\mathcal{M}_V^k$, equipped with a contact form $\alpha^k_-$ with periodic Reeb vector field $R_-$ as constructed in Section \ref{Sec4}. Consider the embedded surface $\Sigma^w_{g,b}=\Sigma_{g,b}\times \{w\}$. $\Sigma^w_{g,b}$ is transverse to the contact vector field $R_-$. Glue to each boundary component of $M$ an appropriate solid torus $D^2\times S^1$ equipped with a radially symmetric contact form that is invariant in the $S^1$ direction. The resulting manifold $M'=\Sigma_g\times S^1$ is equipped with a contact form $\alpha'_-$ that coincide with $\alpha_-$ on $M\subset M'$. For every $w\in [0,2\pi]$ the surface $\Sigma^w_g$ is convex in $(M',\ker \alpha_-')$ with dividing set given by a disjoint union of contractible\footnote{Therefore by Giroux criterion  $\ker \alpha_-'$ is {\it overtwisted}.} curves isotopic to the boundary components of $\Sigma^w_{g,b}$. Since a non separating curve $c$ on $\Sigma^w_{g,b}$ is non-isolating in $\Sigma^w_g$ we have the following:

\begin{remark}
    A simple closed curve on $\Sigma^w_{g,b}$ that is non-separating is non-isolating. 
\end{remark}

\begin{corollary}
Let $c$ be a simple closed curve on $\Sigma^w_{g,b}$ that is non-separating. After an isotopy of $\Sigma^w_{g,b}$ transverse to $R_-$ we can assume that $g$ is Legendrian for $\ker \alpha_-$.
\end{corollary}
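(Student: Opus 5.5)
The plan is to reduce the corollary to the Legendrian Realization Principle (Theorem~\ref{LRP}) by passing to the closed manifold $M'=\Sigma_g\times S^1$ built just above. There each $\Sigma^w_g$ is a closed convex surface for $\ker\alpha'_-$, and $R_-$ serves as the transverse contact vector field.

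\textbf{Step 1: set up the closed convex surface.} Fix $w$. Embed $\Sigma^w_{g,b}\subset\Sigma^w_g\subset M'$, where $\alpha'_-$ agrees with $\alpha^k_-$ on $M$ and the Reeb field $R'_-$ is vertical and transverse to $\Sigma^w_g$. Then $\Sigma^w_g$ is convex. Its dividing set $\Gamma$ is the locus where $R'_-\in\ker\alpha'_-$. Since $\alpha'_-(R'_-)=1$ on the surface, this locus is empty on $\Sigma^w_{g,b}$. It therefore consists of curves inside the glued solid tori, each isotopic to a component of $\partial\Sigma^w_{g,b}$. This is the point where the radially symmetric model on $D^2\times S^1$ must be chosen so that $R'_-$ is contact but not Reeb near the core, and I will take that as given from the construction.

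\textbf{Step 2: verify the non-isolating hypothesis.} Let $c\subset\Sigma^w_{g,b}$ be non-separating. Then $\Sigma^w_g\setminus c$ is connected and contains every disk $D_i$, hence meets $\Gamma$. Thus $c$ is non-isolating, as in the preceding remark.

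\textbf{Step 3: apply Theorem~\ref{LRP}.} This gives an isotopy $\psi_s$ of $\Sigma^w_g$, transverse to $R'_-$ throughout and fixed on $\Gamma$, with $\psi_1(c)$ Legendrian for $\ker\alpha'_-$.

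\textbf{Step 4: restrict back to $M$.} This is the main obstacle: I must ensure that the isotopy and the curve $\psi_1(c)$ stay inside $M$, so the conclusion holds for $\ker\alpha_-=\ker\alpha^k_-$. I would handle it by the standard proof of LRP. One builds a characteristic foliation adapted to $\Gamma$ that contains $c$ as a union of leaves, invokes Giroux flexibility, and realizes the change by an isotopy supported in a neighborhood of $c$ together with the chosen foliation. Choosing that foliation to agree with the original one near the $D_i$ lets the isotopy be supported away from $\bigcup_i D_i$, hence inside $\Sigma_{g,b}\times S^1$.

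Transversality to $R'_-=R_-$ persists, so $\psi_1(\Sigma^w_{g,b})$ is still a surface in $M$ transverse to $R_-$. The isotoped curve $c$, read as $g$ in the statement, is Legendrian for $\ker\alpha_-$. Since the isotopy moves points along $R_-$-lines, which are the vertical $S^1$ fibers, the result is a graph over $\Sigma_{g,b}$ and remains a section of the fibration, which is what later sections need.
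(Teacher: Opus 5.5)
Your proposal is correct and follows the paper's own argument: cap off to $M'=\Sigma_g\times S^1$, observe that the dividing set of $\Sigma^w_g$ lies in the caps so that a non-separating curve is non-isolating, and apply Theorem~\ref{LRP}. Your Step 4 uses the relative form of Giroux flexibility to keep the isotopy away from the caps, so that the isotoped surface and the Legendrian curve stay in $M$; the paper leaves this point implicit. One small correction to Step 1: the transverse contact field there is the vertical field $\partial_w$, which is the Reeb field only on $M$, not on all of $\Sigma^w_g$.
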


\begin{figure}[ht!]
    \centering
    \includegraphics[width=0.85\textwidth]{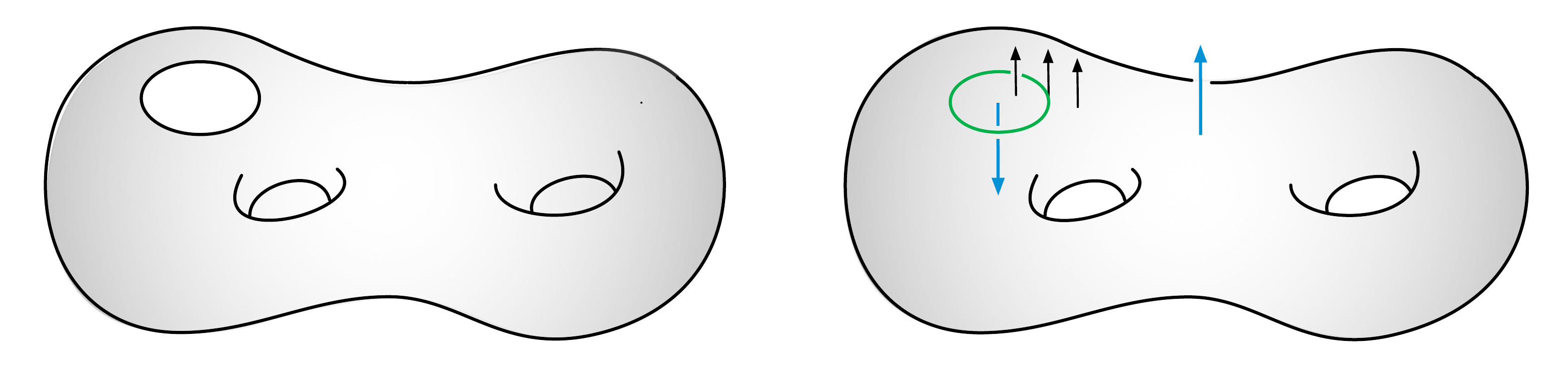}
    \caption{The surface with boundary $\Sigma_{g,b}$ is transverse to $R$. When we cap off the boundary components of  $\Sigma_{g,b}\times S^1$ we get a convex surface with dividing set isotopic to $\partial\Sigma_{g,b}$. The black arrows represent the contact vector field $v$, the blue ones the Reeb vector field. The green curve is the dividing set.}
    \label{fig:Convex}
\end{figure}

\begin{remark}
    Note that the Reeb vector field $R$ of a contact form such that $\ker \alpha=\xi$ is a contact vector field that is not everywhere transverse to a closed surface. In the case of our capped-off surface $\Sigma_g$ we can ensure that the contact vector field $v$ coincide with $R$ on $\Sigma_{g,b}$ and coincide with the opposite of the Reeb field $R_D$ of the radially symmetric contact form only in a neighborhood $N$ of the core of the solid torus that we used to fill the boundary. Moreover $R$ points outwards $\Sigma_g$ on $\Sigma_{g,b}$ while points inwards $\Sigma_g$ on $N$ (see Figure \ref{fig:Convex}).   
\end{remark}

We now focus on the second step: realizing $c$ as a closed leaf of the characteristic foliation induced by $\xi_+$ on $\Sigma^w_{g,b}$.

\begin{definition}
 Let $s\rightarrow c(s), s\in S^1=[0,2\pi)$ be a parametrization of $c$ and let $V$ be the vector field directing the characteristic foliation on $\Sigma^0_{g,b}$. We define the \textbf{winding function} $wind_V(c):[0,2\pi)\rightarrow \mathbb{Z}$ as the function that counts the points where $Tc\in \xi_+$ following the parametrization of $s\rightarrow c(s), s\in [0,2\pi)$ with the convention that for a neighborhood $N(p)$ of $p$ the value of a tangency point $c(s_p)=p$ along the parametrization is
 \begin{itemize}
     \item $+1$ if the flow-line of $V$ containing $p$ is on the left side of $c$ in $N(p)\setminus p$
     \item $-1$ when the flow-line of $V$ containing $p$ is on the right side of $c$ in $N(p)\setminus p$ \item $0$ if the flow-line of $V$ is contained in both the sides of $c$ in $N(p)$.
 \end{itemize}
   
\end{definition}

\begin{definition}
    The \emph{cardinality} of the image of the winding function of $c$ with respect to $V$ in $\mathbb{Z}$ is the difference between the maximum and minimum value of the winding function $wind_V(c)(s)$. We denote the cardinality by $\Delta W_V(c)$ 
\end{definition}

\begin{remark}
While the winding number $wind_V(c)$ is invariant up to isotopy of $c$ the winding function depends on the specific curve $c$ and on the chosen parametrization.
However, the quantity $\Delta W_V(c)$ does depend only on the curve $c$ and it is independent on the parametrization. 
\end{remark}

\begin{proposition}[Realizing biLegendrian curves]
\label{prop:wind}
      Let $\mathcal{M}_V^k$ be a trivial strongly adapted bicontact plug. Let $V$ be the vector field directing the  $\xi_+$ on $\Sigma^0_{g,b}$ and let $c$ be a non separating simple closed curve on $\Sigma^0_{g,b}$ with winding number $wind_V(c)=0$. Then for $k$ large enough there is a closed orbit $\gamma$ of $X^t$ isotopic to $c$.
    \end{proposition}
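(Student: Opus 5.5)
We need a closed orbit of $X^t$ isotopic to $c$, i.e. a closed curve that is Legendrian for both $\xi_-^k$ and $\xi_+^k$ (a biLegendrian). The approach is to first make $c$ Legendrian for $\ker\alpha^k_-$ on a suitably isotoped fiber, and then choose the height $w$ so that $c$ is also tangent to $\xi^k_+$.

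\textbf{Step 1: Legendrian for $\xi_-$.} Apply the Corollary following the Legendrian Realization Principle to the non-separating curve $c\subset\Sigma^0_{g,b}$. This gives an isotopy of the fiber, transverse to $R_-$ and moving it along Reeb flow lines, after which $c$ becomes Legendrian for $\ker\alpha_-^k$. Concretely, the new fiber is a graph $w=\phi(x)$ over $\Sigma_{g,b}$, and $c$ lifts to a curve $\tilde c(s)=(c(s),\phi(c(s)))$ with $\alpha_-^k(\dot{\tilde c})=0$. Since $\alpha_-^k=dw+\beta$, this means $d\phi(\dot c)=-\beta(\dot c)$.

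The obstruction to solving this equation is $\int_c\beta$. I would first change $\beta$ by a closed form in its cohomology class (still with $d\beta$ an area form), or use the Legendrian Realization Principle directly, so that $\int_c\beta=0$. Then $\phi$ along $c$ is determined up to a constant $w_0$, and the height variation $\max\phi-\min\phi$ along $c$ is a fixed number $H$ that does not depend on $k$. Note that $\alpha_-^k$ in the interior does not depend on $k$; only the collar does, and $c$ can be kept away from the collar.

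\textbf{Step 2: tangency to $\xi_+$.} Along $\tilde c$, the plane $\xi^k_+$ contains $\partial_w$. Its trace on the horizontal directions at height $w$ is the direction of $V$ rotated clockwise by angle $kw$. So $\tilde c$ is Legendrian for $\xi_+$ exactly when
\[
\theta_V(\dot c(s)) = k\,\phi(c(s)) \pmod{\pi},
\]
where $\theta_V(\dot c(s))$ is the angle of $\dot c$ measured from $V$. Because $wind_V(c)=0$, the function $\theta_V(\dot c(s))$ lifts to a genuine periodic real function $\Theta(s)$ whose oscillation is bounded by roughly $\pi\,\Delta W_V(c)$.

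We therefore want to choose $\phi$ along $c$ equal to $(\Theta(s)+w_0)/k$ and make it compatible with Step 1. Since both constraints concern the same curve, I would reverse the roles: instead of keeping $c$ fixed, isotope $c$ within the fiber (equivalently, use the flexibility in the Legendrian Realization) so that $\beta(\dot c)=-\frac1k\Theta'(s)$. Both sides have zero integral around $c$: the left after the normalization in Step 1, the right by $wind_V(c)=0$.

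For $k$ large, the right-hand side is tiny. So the curve needed is $C^1$-close to a curve $c'$ with $\beta(\dot c')=0$, that is, a curve tangent to $\ker\beta$ on the fiber, which is a Legendrian for $\xi_-$ lying in a single level $w=w_0$. The perturbation then corrects the tangency angle to match $V$ rotated by $k\phi$.

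\textbf{Main obstacle.} The main difficulty is making Steps 1 and 2 simultaneously consistent: finding an isotopic representative $c$ of the class along which $\beta(\dot c)$ equals a prescribed zero-mean function of size $O(1/k)$.

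My first attempt would be to choose $\beta$ near $c$ in a Darboux-type model. In an annular neighborhood of $c$ with coordinates $(s,t)$, take $\beta=t\,ds$ after adjusting by an exact form; this is possible since $c$ is non-separating and one can arrange $\int_c\beta=0$. Then take $c$ to be the graph $t=-\frac1k\Theta'(s)$ (more precisely, solve $t\,ds(\dot c)=-\frac1k\Theta'$). This is a small $C^0$ perturbation of $\{t=0\}$ for $k$ large, hence isotopic to $c$. Its tangent angle relative to $V$ differs from $\Theta$ by $O(1/k)$, and this error is absorbed by a fixed-point or implicit-function argument in $\phi$.

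The resulting curve is tangent to both $\xi_-$ and $\xi_+$, hence tangent to $X$. Since it is closed, it is a closed orbit $\gamma$ of $X^t$ isotopic to $c$.
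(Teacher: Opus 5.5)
\textbf{There is a genuine gap: the small parameter you rely on does not exist.}

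Your claim that $\alpha^k_-$ is independent of $k$ away from the collars is false. On each $B_i$ the form is forced to be $\beta=-\frac{h_i}{k}\,d\theta$, since this is what makes the tangency lines of $\xi^k_+$ Legendrian for $\xi^k_-$. Because $d\beta$ is a nowhere-vanishing area form, it has constant sign, and Stokes gives
\[
\Bigl|\int_{\Sigma_{g,b}}d\beta\Bigr|=\frac{2\pi}{k}\sum_i|h_i|=\frac{2\pi\,|\chi(\Sigma_{g,b})|}{k}.
\]
So $\beta$ is itself of size $1/k$, and the right-hand side of $\beta(\dot c)=-\frac1k\Theta'$ is not small compared with $\beta$.

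In fact the problem is scale invariant. In the interior, where $c$ lives, substituting $u=kw$ turns $(\alpha^k_-,\alpha^k_+)$ into $\bigl(\frac1k(du+k\beta),\ \sin u\,dx+\cos u\,dy\bigr)$. The level-$k$ problem is therefore the $k=1$ problem for the form $k\beta$, whose $d$-area is of order one, and your equation becomes $k\beta(\dot c)=-\Theta'$, which does not depend on $k$.

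Concretely, look at your Darboux band $\{|t|<\epsilon\}$ with $\beta=t\,ds$ and $s\in[0,2\pi]$.
\begin{itemize}
\item Its $d\beta$-area is $4\pi\epsilon$, so $\epsilon\le|\chi(\Sigma_{g,b})|/(2k)$.
\item The curve $t=-\Theta'/k$ you want has amplitude $\max|\Theta'|/k\ge \mathrm{osc}(\Theta)/(\pi k)$.
\item It fits in the band only if $\mathrm{osc}(\Theta)<\pi|\chi(\Sigma_{g,b})|/2$. The same condition results whatever period you give $s$.
\end{itemize}
This is a $k$-independent condition, and it fails for curves whose angle function oscillates a lot, for instance when $\Delta W_V(c)$ is large. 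So the implicit-function step never closes, however large $k$ is. Separately, changing $\beta$ changes the bicontact structure, so you would also need an orbit-equivalence argument to return to the original $X^t$.

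\textbf{The missing idea is to exploit the flexibility of $\xi_+$, not of $\beta$ or of the curve.}
\begin{enumerate}
\item After Legendrian realization, $c'$ is a graph over its projection inside the vertical cylinder $\mathcal{C}$.
\item Since $\xi^k_+$ contains $\partial_w$ and rotates monotonically in $w$, the $\xi_+$-Legendrian curves in $\mathcal{C}$ are graphs of the form $kw=\pm\Theta(s)+m\pi$. They close up exactly because $wind_V(c)=0$.
\item Any diffeomorphism preserving each $R_-$-line pushes $\xi_+$ to a positive contact structure that still contains $R_-$. It is therefore automatically transverse to $\xi_-$ and strongly adapted.
\item Choosing such a diffeomorphism to carry one of these graphs, $c''$, onto $c'$ makes $c'$ biLegendrian.
\end{enumerate}
This is the paper's argument. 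It then doubles the plug, applies Hozoori's theorem and uses structural stability to recover the original flow up to orbit equivalence. The vertical push is completely unconstrained, which is why it succeeds where your coupled equation has no small parameter.
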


\begin{proof}
Since $c$ is non-separating after an isotopy of $\Sigma^0_{g,b}$ by Theorem \ref{LRP} there is a curve $c'$ that is Legendrian for $\ker \alpha_-$ and isotopic to $c$. 
Since the winding number is an invariant of isotopy we have $wind_V(c')=0$. Consider the cardinality of the image of the winding function, namely $\Delta W_V(c')$. Suppose that $k\geq 2\Delta W_V(c')$. Consider the open cylinder $\mathcal{C}=c'\times (0,2\pi)$ embedded in $\mathcal{M}_V^k$. Since $k\geq 2\Delta W_V(c')$ there is a simple closed curve $c''$ on $\mathcal{C}$ and isotopic to $c'$ where $\xi_+=T\mathcal{C}$ or equivalently $c''$ that is a Legendrian curve in $\mathcal{C}$ for $\xi_+$ (see Figure \ref{fig:Normal}). Now, the curve $c'$ and $c''$ are both contained in $\mathcal{C}$ and both intersect each segment of the vertical fibration $C$ once. Since the contact structure $\xi_+$ contains the vertical fibration of $\mathcal{C}$ we can isotope the contact structure $\xi_+$ along the vertical fibration in such a way that after the isotopy $c''=c'$ (see \cite{Sal1} Lemma 5.5 and Figure~\ref{fig:Normal}). Note that such an isotopy preserves transversality since it is achieved by sliding $\xi_+$ along curves that are transverse to $\ker \alpha_-$. Then, after isotopy $c''=c'=\gamma$ is a closed orbit of the projectively Anosov flow supported by the bicontact structure $(\ker \alpha_-,\xi'_+)$ where $\xi_+'$ is isotopic to $\xi_+$. We conclude the proof of the statement noting that we can think of the plug as embedded in an Anosov flow. This follows as along the isotopy the strongly adapted property is preserved, and so the isotopy induces a one parameter family of flows. At each stage of the isotopy two copies of the manifold $M$ are compatible $\mathcal{SAB}$ plugs with $\mathcal{QTP}$ boundaries, and so the ``doubled" flow is supported by an $\mathcal{SAB}$ structure on the double, and thus is Anosov. The structural stability implies these flows are all equivalent.

\end{proof}

\begin{figure}
    \centering
    \includegraphics[width=0.65\textwidth]{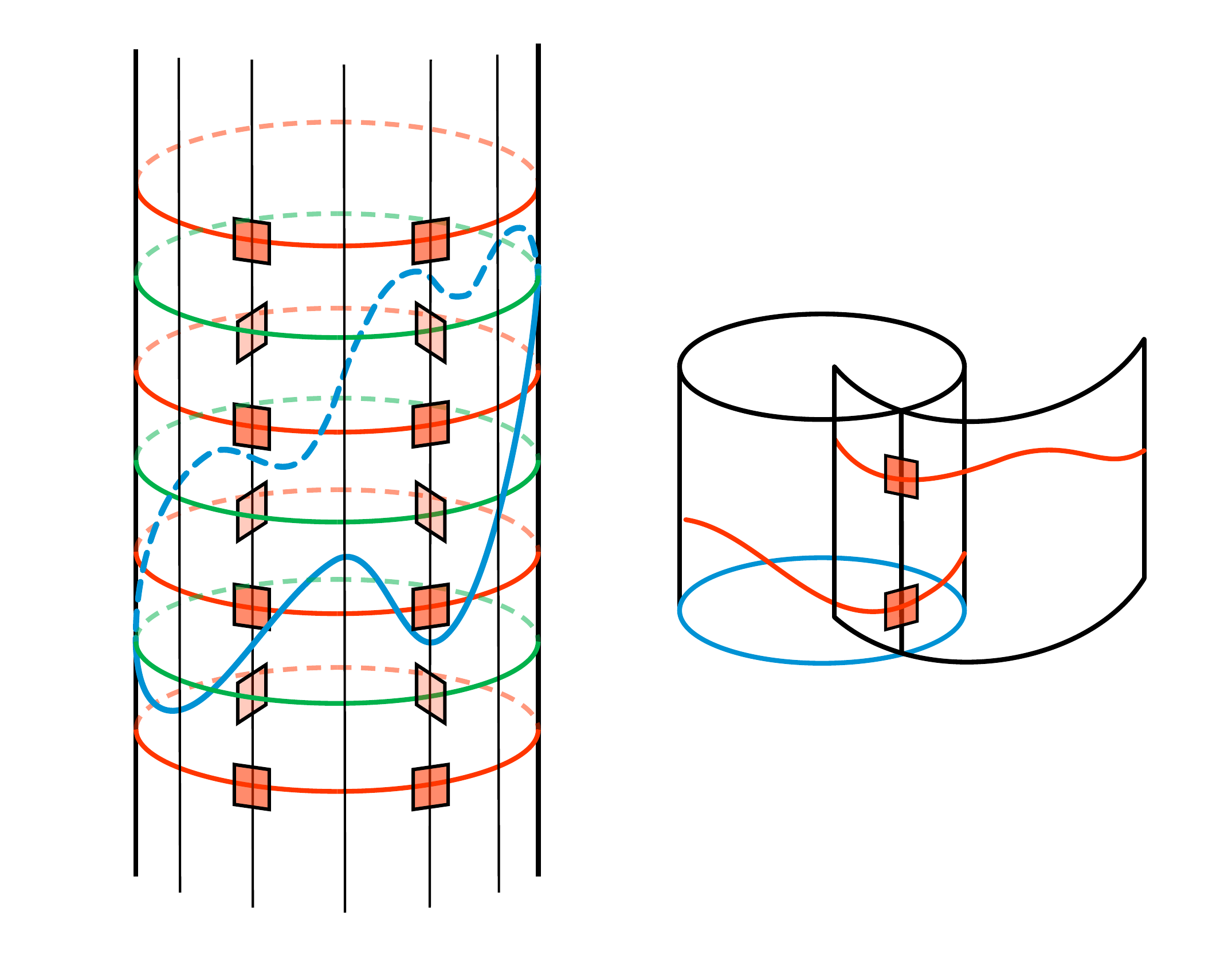}
    \caption{The cylinder $\mathcal{C}$ containing $c'$. The blue curve $c'$ is such that  $wind_V(c')=0$ and $\Delta W_V(c')=1$. A red curve $c''$ is isotopic to $c'$. On the right, two cylinders above two intersecting curves $c_1$ and $c_2$.  }
    \label{fig:Normal}
\end{figure}

We can now proof our main result.

\begin{figure}[!ht]
    \centering
    \includegraphics[width=0.8\textwidth]{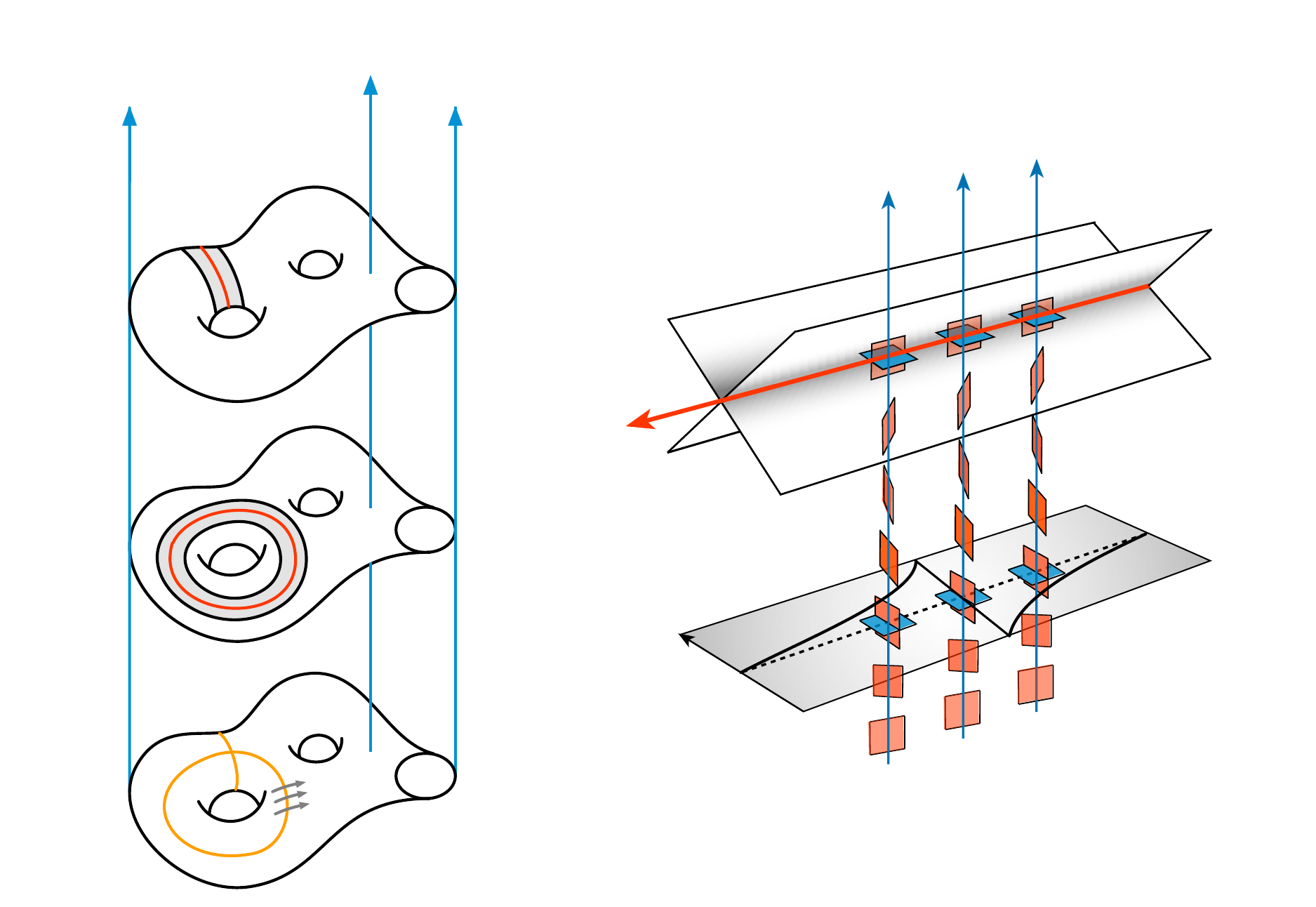}
    \caption{The yellow graph $G=c_1\cup c_2$ is the union of curves that are transverse to a vector field $V$. The red curves $\gamma_i$ are biLegendrian at level $w_i$ and parallel to $\Sigma^{w_i}_{g,b}$ . The surgery annuli $A_i$ have the same framing of the fibers by construction. A bicontact surgery along a bilegendrian knot $\gamma_i$ changes the monodromy by a Dehn twist along $\gamma_i$.}
    \label{fig:Surgery}
\end{figure}

\begin{proof}[Proof of Thorem~\ref{thm:bundle plugs}]
For every $c_i\in \Gamma$ choose a surface $\Sigma_{g,b}\times \{w_i\}$ with $w_1<w_2<\cdots <w_r$. Let $l_i$ be the segment of $w$-curve described by $w\in [w_i,w_{i+1}]$. Moreover assume that $w_{i+1}-w_i=2\pi/r$. First realize $c_i$ as a Legendrian curve $c'_i$ isotopic to $c_i$ on $\Sigma_{g,b}\times \{w_i\}$. Since $c$ is transverse,  $wind_v(c'_i)=0$ by Proposition \ref{prop:wind} we can isotope $\xi_+$ along the flow-lines of $R_-$ to realize $c'_i$ as a biLegendrian curve on $\Sigma_{g,b}\times \{w_i\}$ and therefore as closed orbit of $X^t$. If we take $$k=2\sum_1^r\Delta W_V(c'_i)$$ by the same argument used before we can realize a sequence $\{\gamma_1,\cdots ,\gamma_r\}$ of biLegendrian curves with each curve $\gamma_i$ of the sequence lying on $\Sigma_{g,b}\times \{w_i\}$. Consider a $f=\tau^{q_1}_1\circ \cdots \circ \tau^{q_r}_r\in\MCG(\Sigma_{g,b})$. If we perform a $q_i$-bicontact surgery along $\gamma_i$ we change the monodromy from the identity $Id$ to $\tau_i^{q_i}$ (in symbols  $Id\xrightarrow {\mathcal{S}\{(\gamma_i,q_i)\}}\tau_i^{q_i}$). Note that a composition of a $q_i$-bicontact surgery and a $q_j$-bicontact surgery induce a change of monodromy by $\tau_i^{q_i}\circ\tau_j^{q_j}$ (in symbols  $Id\xrightarrow {\mathcal{S}\{(\gamma_{i},q_{i}),(\gamma_j,q_j)\}}\tau_{i}^{q_{i}}\circ\tau_j^{q_j})$ if and only if $w_i<w_j$ (see Figure \ref{fig:Surgery}). In particular the order of the composition is determined by the level of the section $\Sigma_{g,b}^w$ and not by the order of the sequence of surgeries. Inductively we have 

$$Id\xrightarrow {\mathcal{S}\{(\gamma_{1},q_1), \cdots ,(\gamma_r,q_r)\}}\tau_1^{q_1}\circ \cdots \circ\tau_r^{q_r}$$ 
\end{proof}

\begin{remark}
    Theorem \ref{thm:bundle plugs} holds more generally for winding number-0 curves in $V$.
\end{remark}

\section{Punctured torus bundles over the circle and manifolds that support many Anosov flows}

Given a surface $\Sigma_{g,q}$ and a mapping class $f\in \PMod(\Sigma_{g,q})$ it is not in general possible to find a non vanishing vector field $V$ such that $f$ can be decomposed as a product of Dehn twists along a family of curves $C=\{c_1,c_2,\cdots,c_r\}$ such that each $c_i$ is a $wind_V(c_i)=0$. However, in the case of a genus one surface with $b$ boundary components $T^2_b$, this is always possible.

\begin{proposition}
\label{GeneratingV}
    There is a vector field $V$ on $T^2_b$ such that there is a set of $b+1$ curves $C=\{c_1,c_2,\cdots,c_{b+1}\}$ such that transverse to $V$ such that $\PMod(T^2_b)$ is generated by Dehn twists along curves of $C$. 
\end{proposition}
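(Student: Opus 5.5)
Our plan is to combine a concrete vector field on the punctured torus with a classical generating set for the pure mapping class group. Write $T^2=\mathbb{R}^2/\mathbb{Z}^2$ with coordinates $(x,y)$, and start from the constant field $\partial_x+\lambda\partial_y$ with $\lambda>0$ irrational. Index considerations dictate the modification: on $\Sigma_g$ with $g=1$ the indices of the $b$ singular points must sum to $0$, and each must be non-positive, so every singularity has index $0$. We therefore introduce $b$ fake singular points. We choose $b$ small disjoint disks $D_1,\dots,D_b$ centred at points $p_1,\dots,p_b$ on a common horizontal segment $y=y_0$. We take $V$ to be the restriction of $\partial_x+\lambda\partial_y$, after a small perturbation supported near the $D_i$ so that it is tangent to or transverse to $\partial D_i$ in the form the construction of Proposition~\ref{prop: bicontact plugs} requires. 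Since $V$ is nonvanishing on $T^2$, each $v_i$ is a regular point, which we treat as an index-$0$ ``singularity'', and $h_i=-1$.

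For the curves we use Humphries/Gervais-type generators for $\PMod(T^2_b)$. Take $a$ to be a horizontal curve $y=y_1$ with $y_1\neq y_0$; this curve avoids the disks. Take $c_1,\dots,c_b$ to be vertical curves $x=x_j$, where the $x_j$ are chosen so that the vertical annuli between consecutive $c_j$'s each contain exactly one puncture $p_j$. It is a classical fact (Gervais; see also the Farb--Margalit treatment of the chain $a,c_1,\dots,c_b$, which forms a star/chain configuration) that the Dehn twists about $a$ together with the $b$ pairwise disjoint, non-isotopic curves $c_j$ generate $\PMod(T^2_b)$. Here $c_j$ and $c_{j+1}$ cobound an annulus containing one puncture, and the twists about the boundary curves arise as products of these. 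This gives $b+1$ curves, each non-separating.

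Transversality is then immediate on the unperturbed region. A horizontal curve has tangent $\partial_x$, which is never parallel to $\partial_x+\lambda\partial_y$ because $\lambda\neq0$. A vertical curve has tangent $\partial_y$, which is never parallel to the field because the field has $x$-component $1$. The perturbation near the $D_i$ is supported in small neighbourhoods disjoint from all the $c_j$ and from $a$, so transversality is preserved. Consequently each $wind_V(c_j)=0$, which is exactly what Theorem~\ref{thm:bundle plugs} needs.

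The main obstacle is the mapping-class-group input: we must confirm that this specific family of $b+1$ curves, one horizontal and $b$ vertical and separated by the punctures, really generates $\PMod(T^2_b)$, including the twists about curves surrounding punctures. We would cite Gervais's presentation, or alternatively run the Birman exact sequence induction on $b$. In the induction step, the point-pushing subgroup is generated by push maps along the horizontal and vertical loops through a puncture. Each such push map is a product $T_{c_j}T_{c_{j+1}}^{-1}$ or a product involving $a$. If this check showed that additional horizontal curves were needed, we would add horizontal curves at different heights. These remain transverse to $V$, so only the count $b+1$, and not the method, would change.
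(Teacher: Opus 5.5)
Your construction is essentially the paper's. There too $V$ directs a linear foliation of $T^2$, the curves are the standard configuration of its figure (one curve $c_{b+1}$ and $b$ curves $c_1,\dots,c_b$ crossing it), and the fact that their twists generate $\PMod(T^2_b)$ is simply read off the figure. The one difference is in your favour. The paper takes the leaves parallel to $c_{b+1}$, so $c_{b+1}$ is a closed orbit of $V$ and only $wind_V(c_{b+1})=0$ holds; that is all it uses later. Tilting the slope to any $\lambda\neq 0$ instead makes all $b+1$ curves genuinely transverse, as the statement literally asks. Neither the irrationality of $\lambda$ nor the perturbation near the $D_i$ is needed.

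The step you have not secured is generation by exactly these $b+1$ twists, and your sketch does not close as written. The push of $p_b$ along a horizontal loop is $T_aT_{a''}^{-1}$, where $a''$ is separated from $a$ by $p_b$ alone, and $T_{a''}$ is not one of your generators. Your fallback does not help: with all punctures on $y=y_0$, every line $y=y_1\neq y_0$ is isotopic to $a$, and adding curves would break the count $b+1$ anyway.

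The missing observation is that $P_j=T_{c_j}T_{c_{j+1}}^{-1}\in G$ is the push of $p_j$ once around its vertical annulus. Moreover, $P_j^{\pm1}(a)$ is $a$ rerouted inside that annulus to the other side of $p_j$, so $a$ and $P_j^{\pm1}(a)$ cobound an annulus containing only $p_j$. Hence $T_{a''}=P_b^{\pm1}T_aP_b^{\mp1}\in G$, and the horizontal push of $p_b$ is a commutator of $T_a$ and $P_b$. Conjugating that push by $P_j$ for $j<b$ gives pushes of $p_b$ along horizontal loops passing on the other side of $p_j$; their ratios with the original are loops around the $p_j$. Together with $P_b$ (the vertical push), these generate $\pi_1(T^2\setminus\{p_1,\dots,p_{b-1}\},p_b)$. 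Forgetting $p_b$ makes $c_b$ isotopic to $c_1$ and leaves the same configuration for $b-1$ punctures. So the Birman exact sequence and induction from $\PMod(T^2_1)\cong SL_2(\mathbb{Z})=\langle T_a,T_{c_1}\rangle$ give $G=\PMod(T^2_b)$.

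This argument genuinely needs punctures. Suppose instead there are $b\geq 2$ boundary components fixed pointwise. Cap all but one of them and abelianize $\mathrm{Mod}(S_{1,1})\cong B_3$: every nonseparating twist goes to $1$, the twist about the remaining boundary component to $12$, and the other boundary twists to $0$. So no family of nonseparating twists generates. Your remark that boundary twists ``arise as products of these'' is therefore false in that reading, and vacuous in the punctured one, where those twists are trivial.
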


\begin{proof}
Construct on $T^2_b\times S^1$ a contact form $\alpha_-$ as in Section \ref{Sec4}. Consider the curves $C=\{c_1, c_2,\cdots, c_{b+1}\}$ generating $\PMod(T^2_b)$ as in Figure~\ref{fig:Torus}. the  graph $G=c_1\cup c_2\cup \cdots \cup c_{b+1}$ on $T^2_b$ is non-isolating, by the Legendrian Realization Principle \ref{LRP} on $T^2_b$ there is a Legendrian graph $G'$ isotopic to $G$.

    Consider on $T^2=T^2_b\cup D_1\cup\cdots \cup D_b$ a linear foliation $\mathcal{F}$ constructed as follows. Let $c_{b+1}\subset G$ be as above and extend it to a foliation $\mathcal{F}$ with leaves isotopic to $c_{b+1}$ and transverse to $\{c_1,\cdots,c_b\}$ (see Figure \ref{fig:Torus}).
\end{proof}

\begin{figure}[!ht]
    \centering
    \includegraphics[width=0.8\textwidth]{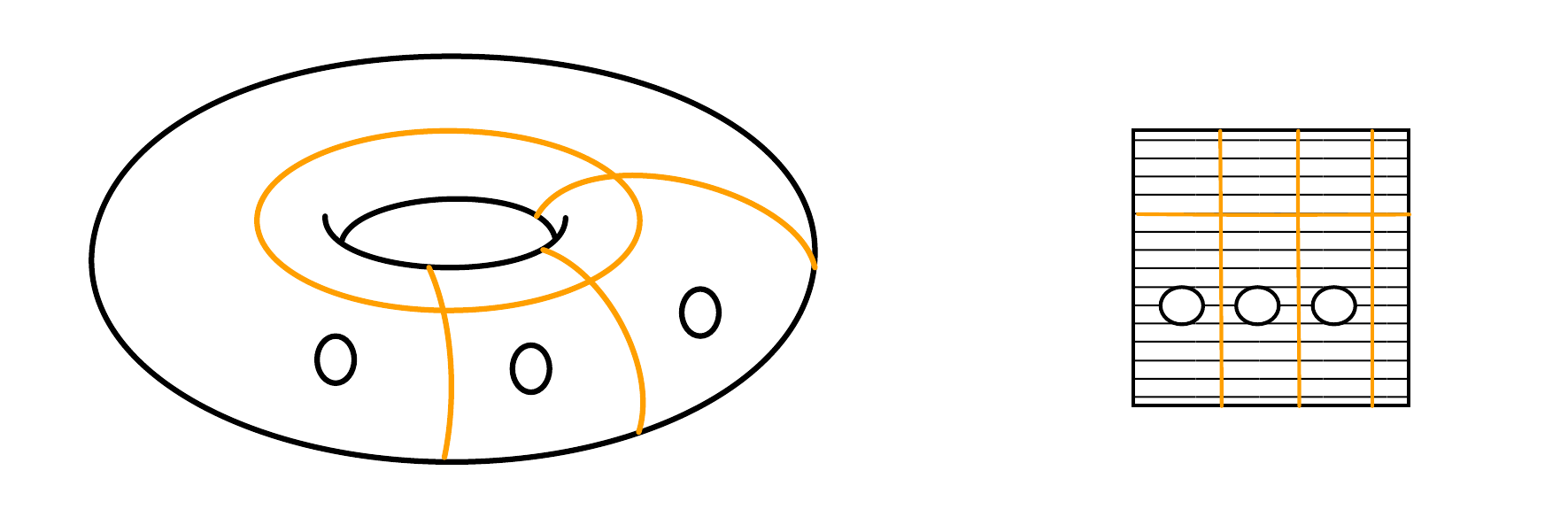}
    \caption{Explicit construction of the vector field $V$ on a $b$-times punctured torus. The orange curves are generators of $\PMod(T^2_b)$.}
    \label{fig:Torus}
\end{figure}

 \begin{proof}[Proof of Theorem \ref{pro:torus}]
     Note that $c_{b+1}$ is a closed trajectory of $V$ by construction while $\{c_1,\cdots,c_b\}$ are transverse to $V$ (note that this implies that $wind_V(c_i)=0, 1\leq i\leq b+1$). For $k=r$ there are exactly $2k$ Legendrians on the cylinder $\mathcal{C}_i$ for every $ 1\leq i\leq b+1$. Each of these curves can be realized as biLegendrian $\gamma_i$ by an isotopy that does not break the strongly adapted property and therefore induces a path of Anosov flows. By theorem and theorem for $k\geq r$ we can realize by surgery along biLegendrians every word $f=\tau^{q_1}_1\circ \cdots \circ \tau^{q_r}_r\in \PMod(T^2_b)$. Consider a boundary component $b_i$ of $T^2_b$. Since by construction $h_i=1$, by remark 5.3 we have $2 gcd(k,h_i)=2$ boundary orbits.
 \end{proof}

\begin{construction}
    Let $T_b^2$ a $b$ times-punctured torus and let $V$ be the vector field constructed in Proposition \ref{GeneratingV}. Consider the strongly adapted bicontact plug with quasi-transverse periodic boundary components $\mathcal{M}$ as constructed in Section \ref{Sec4}. Theorem \ref{thm:bundle plugs} allows us to modifiy its geometry in a controlled way performing bicontact surgery along closet orbits of the flow supported by $(\alpha_-,\xi_+)$ and parallel to $T_b^2$. The result is a new strongly adapted bicontact plug $\mathcal{M}_f$ where $f$ is any monodromy map $f\in \PMod(T_b^2)$. The monodromy is the first return map of the new Reeb vector field $\tilde{R}_-$ of the supporting contact structure $(\tilde{\alpha}_-,\tilde{\xi}_+)$. Since the boundary of the new plug  is quasi-transverse and periodic glue,  we can construct a bicontact plugs $\mathcal{M}_V^k$
\end{construction}

Yang and Yu ask in \cite[Question 1.3]{YangYu2022Classifying} if a toroidal manifold composed of two figure eight knot complement pieces can carry a transitive Anosov flow. We show that in fact it can carry many.

\begin{proof}[Proof of Corollary \ref{cor:figure8}]
Consider the figure-eight knot complement $M_8$ as a once punctured torus bundle over the circle, with the Arnold cat map monodromy. Let $\phi_n$ be the Anosov flow corresponding to the positive contact structure twisting $n$ times along the circle.
The flow $\phi_n$ has two tangent orbits to $\partial M_8$, with coordinates $\mu+n\lambda$ in the homology of the boundary, where $\mu$ is the meridian of the figure-eight knot and $\lambda$ the longitude. 
Note that $\lambda$ intersects the orbit once.
The orbits of the positive Reeb field $R_+$ are parallel to $\mu$, and thus intersect the orbits $n$ times. After the boundary bicontact surgery, the new Reeb orbits $w'$ have homology 
\[
w'=\frac{1}{n}\mu+\frac{s}{n} (\mu+n\lambda),
\]
where $s$ is defined mod $n$. In other words, we are free to sheer along the orbit, connecting the same point on the bottom boundary of an annular neighborhood of the orbit push off to the same point on the top boundary of the neighborhood, obtaining any closed curve that intersects the orbit once as an orbit of the new Reeb flow.

Note that for any $n$, the longitude $\lambda$ is also a closed curve intersecting the orbit once. 
Therefore, we may choose the new Reeb orbits $w'$ to be parallel to the longitude $\lambda$. Make the same choice for any flow $\phi_n$ for any $n\in\mathbb{N}$. The map defined in Theorem~\ref{thm: gluing plugs} gluing the $B_1$ plug to the $B_{2k}$ plug is in homology of the boundary: 

\[
\Psi_k: [\mu]\to -[\mu]-2k[\lambda],\ \Psi:[\lambda]\to [\lambda].
\]

(The translation along lambda doesn't effect the homology).
$\Psi_k$ is chosen so that is glues a tangent orbit $o_n=\mu+n\lambda$ for $\phi_n$ to a tangent orbit $o_{2k-n}=\mu+(2k-n)\lambda$ for $\phi_{2k-n}$, for any $1\leq n\leq k$, and the orbits of the Reeb flow to orbits of Reeb flow, up to sign.
Thus, the resulting manifold $M_k=M_8\cup_{\Psi_k}M_8$ carries $k+1$ different bi-contact structures, and $k+1$ Anosov flows on it. 

Note that when traveling along an orbit of the Reeb flow $R'_+$ on the boundary, viewing the boundary from the inside of $M_8$, one sees that as the plane field rotates right-handedly, once crossing an orbit going from left to right one arrives at an outgoing annulus. This is sent to an orbit going right to left, and continuing in the direction of $R'_+$ one reaches the inwards pointing transverse annulus. Thus, the gluing map matches orbit to orbit, and an outwards transverse annulus to an inwards transverse annulus.

Denote the flow obtained by attaching $\phi_n$ to $\phi_m$ by $\phi_{n,m}$.
We claim the $k+1$ flows thus defined on $K_k$ are not orbit equivalent to each other:
The manifold $M=M_8\cup_{\Psi}M_8$ contains a unique essential torus $T=\partial M_8$. Therefore, any orbit equivalence between two of these flows $\phi_{n,m}$ and $\phi_{n',m'}$ takes $T$ to $T$. Cutting the manifold $\Sigma$ along $T$ we obtain two pieces homeomorphic to the figure eight complement. Therefore, the orbit equivalence restricted to one piece must take it to one of the pieces $\Sigma_{n'}$ or $\Sigma_{m'}$. But as the meridian and longitude are well defined in $M_8$ (up to sign), and thus so are the slopes of the boundary orbits, an orbit equivalence is only possible is $\{n,m\}=\{n',m'\}$ (see Figure \ref{fig:Fibers}).
\end{proof}

\begin{figure}
    \centering
    \includegraphics[width=0.8\textwidth]{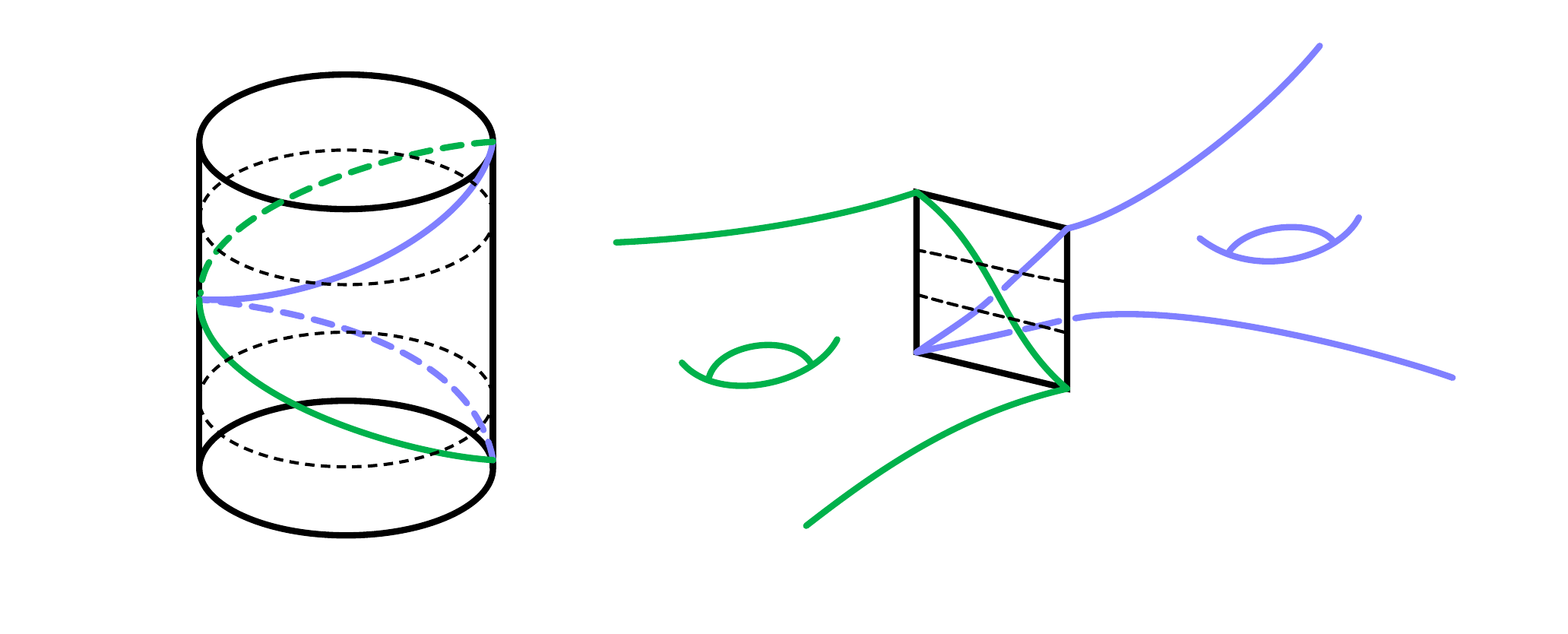}
    \caption{Interaction of the fibers along a quasi-transverse periodic boundary. The dashed black curves are the boundary obits of the supported flow. The vertical direction represents $R_-$.  }
    \label{fig:Fibers}
\end{figure}

    The same construction allows us to give many examples of a graph manifold carrying $k$ different Anosov flows, and includes the construction of the first author with Adam Clay \cite{ClayPinsky2025Graph} generalizing the Handel-Thurston examples to glue trefoil complements to each other. Similarly, we can prove using the same method:
    
    \begin{corollary}
     There exists a toroidal manifold with one Seifert fibered piece and one hyperbolic piece carrying $k$ different Anosov flows.
    \end{corollary}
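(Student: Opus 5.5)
We follow the scheme of the proof of Corollary~\ref{cor:figure8}, replacing one figure-eight piece by a trefoil complement. Write $M_8$ for the figure-eight complement with its family of plugs $\phi_n$. By Proposition~\ref{pro:torus} there is one plug for each $n\in\mathbb{N}$. Each $\phi_n$ has two tangent boundary orbits of class $\mu+n\lambda$. After the boundary bicontact surgery of Proposition~\ref{pro:one intersection}, we may take the new Reeb orbits $w'$ on $\partial M_8$ to be parallel to the longitude $\lambda$.

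For the Seifert fibered piece we take the trefoil complement $M_3$. It is a once-punctured torus bundle with periodic monodromy of order six, which lies in the subgroup generated by Dehn twists along the two standard curves. Hence Theorem~\ref{thm:bundle plugs} and Proposition~\ref{pro:torus} give $\mathcal{SAB}$-plugs $\psi_m$ on $M_3$ for every $m$. By Remark~\ref{rmk:relation to geodesic flows} these are covers of geodesic flows of the kind used in \cite{ClayPinsky2025Graph}. Each $\psi_m$ again has two tangent orbits on $\partial M_3$, with slope $\mu_3+m\lambda_3$, where $\lambda_3$ is the fiber-surface boundary. After a boundary surgery we make its Reeb orbits parallel to $\lambda_3$. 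The boundary tori of $\psi_m$ and $\phi_n$ therefore carry the same number (two) of tangent orbits, so they are compatible in the sense of the definition preceding Theorem~\ref{thm: gluing plugs}.

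Next we fix one gluing map, independent of the flows, in the form
\[
\Psi_k:\ \mu\mapsto -\mu_3-2k\lambda_3,\qquad \lambda\mapsto \lambda_3 ,
\]
composed with the translation $w\mapsto w+\pi$ of Theorem~\ref{thm: gluing plugs}. The map $\Psi_k$ sends Reeb orbits to Reeb orbits up to sign. It also sends the tangent orbit $\mu+n\lambda$ of $\phi_n$ to the class of $-(\mu_3+(2k-n)\lambda_3)$, which is a tangent orbit of $\psi_{2k-n}$. Theorem~\ref{thm: gluing plugs} then gives, for each $1\le n\le k$, an $\mathcal{SAB}$ structure on the single closed manifold $N_k=M_8\cup_{\Psi_k}M_3$, and hence an Anosov flow $\phi_{n,2k-n}$. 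The manifold $N_k$ is toroidal, with one Seifert fibered piece and one hyperbolic piece.

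To show that these $k$ flows are pairwise non-orbit-equivalent, we argue as in the figure-eight case. The JSJ torus $T$ of $N_k$ is unique up to isotopy, and the two pieces are non-homeomorphic. So any orbit equivalence, after isotopy, preserves $T$ and maps $M_8$ to $M_8$. The meridian and longitude of $M_8$ are canonical up to sign. Consequently the slope $\mu+n\lambda$ of the tangent orbits on $T$, viewed from $M_8$, is an invariant of the flow, and it distinguishes different $n$.

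The main obstacle is making this last step rigorous. An orbit equivalence need not carry $T$ isotopically to $T$ in a way that preserves the tangent orbits. We would handle this with Barbot--Fenley theory: the periodic orbits on $T$ are, up to free homotopy, determined by the flow. The first point to check is that the free homotopy class of the tangent orbits in $T$ is indeed a flow invariant.
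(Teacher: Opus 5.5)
Your proposal is correct and matches the paper's intended argument. The paper gives no separate proof; it only says the corollary follows by the method of Corollary~\ref{cor:figure8} with a trefoil complement replacing one figure-eight piece, which is exactly your construction via $\Psi_k$ and the flows $\phi_{n,2k-n}$. Your non-equivalence argument is more careful than the paper's, and the step you flag is closed by the Barbot--Fenley fact that the elements of $\pi_1(T)\cong\mathbb{Z}^2$ represented by periodic orbits form a single cyclic subgroup, which makes the slope $\pm(\mu+n\lambda)$ a flow invariant since self-homeomorphisms of $M_8$ preserve $\mu$ and $\lambda$ up to sign.
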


\section{Generalized Handel--Thurston surgery achieved by Goodman--Fried surgery and consequences }

Let $T_1^2$ be a once-puntured torus with hyperbolic metric and consider the geodesic flow $\phi^t$ on its unit tangent bundle. For $k=1$ and $k=2$, consider two strongly adapted bicontact plugs $\mathcal{M}^1$ and $\mathcal{M}^2$ that support two flows that are respectively orbit equivalent to the geodesic flow $\phi^t$ and the $2$-fold cover of $\phi^t$. Gluing these plugs along their boundary yields a new strongly adapted bicontact plug on a graph manifold $\mathcal{M}^{1,2}=\mathcal{M}^1\cup \mathcal{M}^2$ that is a generalized Hendle--thurson example. In particular it is not possible to construct $\mathcal{M}^{1,2}$ by surgery along a separating simple closed geodesic on a geodesic flow on the unit tangent bundle of an hyperbolic surface of genus 2. Let  $\mathcal{M}^{1,1}=\mathcal{M}^1\cup \mathcal{M}^1$ be a geodesic flow on the unit tangent bundle of an hyperbolic surface of genus 2 realized as the union of two bicontact plugs. We show that it is possible to construct $\mathcal{M}^{1,2}$ by surgery along a family of biLegendrian knots all located in one of the two copies of $\mathcal{M}^1$.  
\begin{construction}
\label{con}
Let $T_1^2$ be a one-punctured torus with linear foliation with closed leaves directed by the vector field $V$ as constructed in Proposition \ref{GeneratingV}. Construct the $\mathcal{SAB}$-plugs $\mathcal{M}_V^1$ with $k=1$. By Remark \ref{rmk:relation to geodesic flows} this plug support a flow that is orbit equivalent to the geodesic flow on the unit tangent bundle of $T_1^2$ with hyperbolic metric. Gluing two copies of $\mathcal{M}_V^1$ along their boundary components yields to a strongly adapted bicontact structure which supported flow is orbit equivalent to the geodesic flow on the unit tangent bundle of a closed oriented surface of genus 2. Consider a simple closed curve $c_1$ transverse to the orbits of $V$ and a simple closed curve $c_2$ that is a trajectory of $V$. By thm there are isotopic curves $c_1'$ and $c_2'$ that can be realized as closed orbits for the Anosov flow $X^t$ supported by the strongly adapted bicontact structure $\mathcal{B}$ associated to $\mathcal{M}$. Since $k=1$ we have a family of 4 orbits $\{\gamma_1,\gamma_2,\gamma_3,\gamma_4\}$ pairwise isotopic $(\gamma_1,\gamma_3)$, $(\gamma_2,\gamma_4)$ and such that  and lying at different levels $w_1<w_2<w_3<w_4$. Note that the curves $c_1$ and $c_2$ form a chain in $\PMod(T_1^2)$ therefore a positive Dehn twist along $\partial T_1^2=c_3$ can be expressed as a sequence of positive Dehn twists  
$\tau_3=(\tau_1\circ \tau_2)^6=(\tau_1\circ \tau_2)^4\circ (\tau_1\circ \tau_2)^2 $.

\begin{lemma}\label{int}
Suppose that in the family $C=\{c_1,\cdots, c_r\}$ we have $c_i$ and $c_j$,  $j>i$ intersect in a single point $p$. Consider the associated biLegendrian knots $\gamma_i$ and $\gamma_j$ such that $w_j>w_i$. A sequence of surgeries $\mathcal{S}\{(\gamma_i,1),(\gamma_j,1),(\gamma_i,1)\}$ produces the change of monodromy 
$$g\rightarrow g\circ(\tau_i\circ \tau_j)^2$$
   \end{lemma}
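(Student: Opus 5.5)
The plan is to compute the monodromy change surgery by surgery, reading each change off the level at which the surgery is performed, exactly as in the proof of Theorem~\ref{thm:bundle plugs}, and then to compare the resulting word with $(\tau_i\circ\tau_j)^2$ in $\PMod(\Sigma_{g,b})$. Since $c_i$ and $c_j$ meet in the single point $p$, the twists satisfy the braid relation $\tau_i\tau_j\tau_i=\tau_j\tau_i\tau_j$. This gives
\[
(\tau_i\circ\tau_j)^2=\tau_i\circ(\tau_j\circ\tau_i\circ\tau_j)=\tau_i\circ\tau_i\circ\tau_j\circ\tau_i=\tau_i^2\circ\tau_j\circ\tau_i .
\]
So the target is the four-letter word $\tau_i^2\tau_j\tau_i$, while the sequence $\mathcal{S}\{(\gamma_i,1),(\gamma_j,1),(\gamma_i,1)\}$ contains only three surgeries. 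The proof must therefore show that one of the three surgeries contributes two twists, not one.

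First, the initial surgery $(\gamma_i,1)$ is performed on the biLegendrian $\gamma_i\subset\Sigma^{w_i}_{g,b}$. As in the proof of Theorem~\ref{thm:bundle plugs}, it multiplies the monodromy by $\tau_i$, since the surgery annulus carries the fiber framing. Second, I will examine the surgery $(\gamma_j,1)$ at level $w_j>w_i$ in the new plug. I will track how the fiber framing of $\gamma_j$ compares with its surgery framing after the twist $\tau_i$ has been inserted below it.

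The key observation to establish is the following. The Reeb segment of $R_-$ from $\Sigma^{w_i}$ to $\Sigma^{w_j}$ now passes through the twist region at level $w_i$. Because $c_j$ meets $c_i$ exactly once, this shifts the framing of the L-t push-off of $\gamma_j$ by one relative to the fiber framing. I then need to determine how the stated coefficient $1$, measured in the surgery annulus, translates into a twist exponent relative to the fibration. I expect this bookkeeping to produce the extra factor that accounts for the fourth letter. Third, the final surgery $(\gamma_i,1)$ is again performed at level $w_i$, using the isotopic orbit from the family of Construction~\ref{con} so that it can be made biLegendrian again. I will check that it contributes a single further twist. Finally, I will use the rule that composition order is dictated by the levels $w_i<w_j$, not by the order in which the surgeries are performed, to assemble the factors into $\tau_i^2\tau_j\tau_i$. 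The braid relation above then identifies this word with $(\tau_i\circ\tau_j)^2$.

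The main obstacle is the second step: justifying rigorously that the framing shift caused by the single intersection point $p$ turns one nominal $1$-surgery into a double twist. I would first try to verify this in the local model of Figure~\ref{fig:Surgery}. There I would compute the return map of $\tilde R_-$ on the two surgery annuli $A_i$ and $A_j$, which overlap above $p$, and compare it with the model used in Proposition~\ref{pro:one intersection}. If this produces $\tau_j$ conjugated by $\tau_i$ instead of $\tau_j^2$, the fallback is to recompute the total product using conjugated twists $\tau_{\tau_i(c_j)}=\tau_i\tau_j\tau_i^{-1}$ and then reduce with the braid relation.
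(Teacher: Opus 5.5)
The step you single out as the main obstacle cannot be carried out. The three-versus-four count mismatch you noticed is a real obstruction, not a bookkeeping issue; your braid identity $(\tau_i\tau_j)^2=\tau_i^2\tau_j\tau_i$ is correct.

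A surgery $(\gamma,q)$ of the paper's type (regluing $v\mapsto v+q\cdot K$ across the surgery annulus) replaces the meridian $\mu$ by $\mu'=\mu+q\lambda_A$. Hence $\mu\cdot\mu'=\pm q$, whatever longitude you measure against. A $1/n$-surgery, relative to the fiber framing, on a curve $c$ in a fiber changes the monodromy by $\tau_c^{\pm n}$, and here $n=\pm\,\mu\cdot\mu'$ does not depend on the framing.

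A framing discrepancy $\lambda_A=\lambda_F+m\mu$ therefore cannot double the twist. It only turns $\mu'\cdot\lambda_F$ into $\pm(1+qm)$. For $q=1$, $m=\pm1$ this is $0$ or $\pm 2$, so the surgery is no longer of the form $1/n$ relative to the fiber, and the monodromy description is lost altogether. So each unit surgery contributes exactly one twist $\tau_c^{\pm1}$ about a nonseparating curve.

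Your assembly would fail even if the $\gamma_j$ surgery did give $\tau_j^2$. The level-ordering rule would then produce $\tau_i^2\tau_j^2$ or $\tau_i\tau_j^2\tau_i$. Both have trace $-2$ in $SL(2,\mathbb{Z})$, whereas $(\tau_i\tau_j)^2$ has trace $-1$.

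Your fallback is essentially the paper's own one-line argument: after the first two surgeries, $\gamma_i$ projects to a twisted copy of $c_i$, so the last surgery contributes a conjugated twist. It fails as well. The lemma is applied to the once-punctured torus, where $\PMod(T^2_1)\cong SL(2,\mathbb{Z})$ has abelianization $\mathbb{Z}/12$ (if the boundary is fixed, use $B_3$, whose abelianization is $\mathbb{Z}$). All twists about nonseparating curves are conjugate, so they map to the same generator. A product $\tau_a^{\pm1}\tau_b^{\pm1}\tau_c^{\pm1}$ of three such twists therefore has odd image, while $(\tau_i\tau_j)^2$ has image $4$. This persists after multiplying by $g$ and passing to conjugacy classes.

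Hence $\mathcal{S}\{(\gamma_i,1),(\gamma_j,1),(\gamma_i,1)\}$ cannot change $g$ into $g\circ(\tau_i\tau_j)^2$, and no argument of this kind proves the statement as written. The paper's sketch has the same defect.

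Your algebra does support a corrected version, which needs total surgery coefficient $4$. Let $\gamma_i'$ be an isotopic copy of $\gamma_i$ at a level above $w_j$, as with the family $\gamma_1,\dots,\gamma_4$ in the construction. The level-ordering rule then gives $\mathcal{S}\{(\gamma_i,2),(\gamma_j,1),(\gamma_i',1)\}\colon g\mapsto g\circ\tau_i^2\tau_j\tau_i=g\circ(\tau_i\tau_j)^2$. Alternatively, four unit surgeries at levels $w_i<w_j<w_i'<w_j'$ give $g\circ\tau_i\tau_j\tau_i\tau_j$ directly.
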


\begin{proof}
After a simultaneous surgery along $\gamma_i$ and $\gamma_j$ the projection of the curve $\gamma_i$ on $\Sigma^0_{g,b}$ along the positive direction of the $w$ curves is obtained applying to $c_i$ a Dehn twist along $c_j$.
\end{proof}

Perform a $(1,1)$-bicontact surgery along $\gamma_1$ followed by a $(1,1)$-bicontact surgery on $\gamma_2$ we obtain $(\tau_1\circ \tau_2)$. Since $c_1\cap c_2= p$, by Proposition \ref{int} a subsequent double surgery along $\gamma_1$ induces the monodromy $(\tau_1\circ \tau_2)^4$. Finally, performing a simultaneous surgery along $\gamma_3$ and $\gamma_4$ we get $(\tau_1\circ \tau_2)^6$. Now, performing a bicontact surgery along $\gamma_1$ induces the monodromy $(\tau_1\circ \tau_2)^6$
Note that a monodromy associated to the chain relation induces a Dehn twist on $\partial T_1^2=c_3$, therefore does not change the topology of the ambient manifold that is still homeomorphic to $T_1^2\times S^1$ with natural identification of the fibers. Let $\tilde{R}_-$ be the new Reeb vector field. Note that $\tilde{R}_-=R_-$ in a collar neighborhood of the boundary. The homology class $[\tilde{w}]$ of the new Reeb orbits and the boundary orbits is now obtained by adding the homology class $[c_3]$. Thus the new projectively Anosov flow is orbit equivalent to a two-fold cover of the geodesic flow on the unit tangent bundle of a once punctured torus (see Remark \ref{rmk:relation to geodesic flows}).

\begin{remark}
The construction above can be extended to graph manifolds obtained by gluing any $k$-cover to any $h$-cover of the geodesic flow on the unit tangent bundle of a one-punctures torus.
\end{remark}

\end{construction}

Shannon \cite{Sh24} has recently shown that for some suspension Anosov flows, there exist infinitely many pairs of periodic orbits such that a non-trivial Goodman--Fried surgery produce a flow orbit equivalent to itself. 
Using the tools developed in the previous chapters we can show that in the geodesic flow of the unit tangent bundle of a hyperbolic surface of genus 2 there are sequences of closed orbits such that a non-trivial Goodman--Fried surgery produce a flow equivalent to itself. In particular we have the following.  

 \begin{corollary}
     Let $\phi^t$ be the geodesic flow on the unit tangent bundle of an hyperbolic surface of genus 2. There are infinitely many sequences of four non isotopic closed orbits $\Gamma=\{\gamma_1,\gamma_2,\gamma_3,\gamma_4\}$ such that a non-trivial Goodman--Fried surgery along $\Gamma$ produces a flow equivalent to itself.
 \end{corollary}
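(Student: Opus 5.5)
The plan is to run the chain-relation surgery of Construction~\ref{con} in reverse order, so that its net effect on the genus 2 geodesic flow is trivial. Write $\phi^t$ as the flow supported by $\mathcal{M}^{1,1}=\mathcal{M}^1\cup\mathcal{M}^1$, glued along the separating torus by Theorem~\ref{thm: gluing plugs}. Work in one copy of $\mathcal{M}^1_V\cong T^2_1\times S^1$, where $V$ is the linear foliation of Proposition~\ref{GeneratingV}.

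\textbf{Step 1: realize the curves as orbits.} Take $c_1$ transverse to $V$ and $c_2$ a closed trajectory of $V$, meeting in one point. By Proposition~\ref{prop:wind} and the Legendrian Realization Principle (Theorem~\ref{LRP}), after an isotopy of $\xi_+$ along $R_-$ that preserves the orbit equivalence class, these curves are realized as biLegendrian closed orbits $\gamma_1,\dots,\gamma_4$ on levels $w_1<w_2<w_3<w_4$. Here $\gamma_1,\gamma_3$ are isotopic to $c_1$ and $\gamma_2,\gamma_4$ are isotopic to $c_2$. As orbits of the flow these four are pairwise non-isotopic (freely non-homotopic), because a flow with hyperbolic dynamics has only finitely many orbits in a free homotopy class and the levels are distinct. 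I would argue this using the transverse offsets in the $w$-direction, since two orbits at distinct levels correspond to different lifts.

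\textbf{Step 2: choose the surgery sequence.} Using Lemma~\ref{int} and the level-ordering rule from the proof of Theorem~\ref{thm:bundle plugs}, pick signs and multiplicities so that the accumulated monodromy is $(\tau_1\tau_2)^6\circ(\tau_1\tau_2)^{-6}=\mathrm{Id}$ in $\PMod(T^2_1)$. For example, perform positive surgeries along $\gamma_1,\gamma_2$ and negative surgeries along $\gamma_3,\gamma_4$, with coefficients arranged so that $\gamma_1,\gamma_2$ produce $(\tau_1\tau_2)^{6}$ and $\gamma_3,\gamma_4$ produce $(\tau_1\tau_2)^{-6}$. The chain relation $(\tau_1\tau_2)^6=\tau_{c_3}$ shows that each half separately is a boundary twist. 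The sequence is nontrivial because each individual coefficient is nonzero.

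Varying the exponent gives infinitely many distinct sequences: for instance $(\tau_1\tau_2)^{6m}$ followed by its inverse for every $m\in\mathbb{N}$. Alternatively, conjugating $c_1,c_2$ by elements of $\PMod(T^2_1)$ preserving transversality gives infinitely many non-isotopic choices of $\Gamma$.

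\textbf{Step 3: identify the result.} With trivial total monodromy, the surgered piece has ambient manifold $T^2_1\times S^1$ with the same fiber identification. Near the boundary the Reeb field is unchanged, $\tilde R_-=R_-$, as noted in Construction~\ref{con}. The homology classes of the Reeb orbits $[\tilde w]$ and of the boundary orbits therefore shift by $6m[c_3]-6m[c_3]=0$. By Remark~\ref{rmk:relation to geodesic flows} the new plug supports a cover of a geodesic flow of degree $k=1$, i.e. it is orbit equivalent to the original plug. Regluing to the untouched copy by Theorem~\ref{thm: gluing plugs} with the same slopes then yields a flow equivalent to $\phi^t$. By Salmoiraghi's theorem, each bicontact surgery is a Goodman--Fried surgery along the L-t push-off, which is isotopic to the orbit.

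\textbf{Main obstacle.} The hardest step is Step~3. The identification of the bicontact plug with the geodesic flow is only stated as plausible in Remark~\ref{rmk:relation to geodesic flows}, and I would need the Barbot–Fenley classification (free Seifert pieces without blow-ups, determined by boundary slopes) to conclude orbit equivalence from the topological data. I would rely on that classification together with matching boundary orbit slopes and the count $2\gcd(k,h_i)=2$.
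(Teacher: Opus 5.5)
\textbf{The gap is in the choice of $\Gamma$.} You use the same chain $\{c_1,c_2\}$ for both halves of the relation. So $\gamma_1,\gamma_3$ both lie over $c_1$, and $\gamma_2,\gamma_4$ both lie over $c_2$. Your own Step 1 says $\gamma_1$ and $\gamma_3$ are both isotopic to $c_1$, hence to each other; the paper states exactly this in Construction~\ref{con}. The statement requires four non-isotopic closed orbits, and your justification does not rescue this:

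\begin{itemize}
\item Finiteness of orbits in a free homotopy class says nothing about whether two given orbits share a class.
\item The level $w_i$ is not an isotopy invariant. For $k\ge 2$, equally oriented tangency curves on the cylinder over $c_1$ are isotopic through that cylinder.
\item The most one could try to use is the orientation of the orbits. That is not your argument, and it is not the intended sense: the paper's proof uses four distinct curves.
\end{itemize}

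The non-isotopy hypothesis is what keeps the statement from being degenerate. Cancelling a twist along $c_1$ with an opposite twist along a parallel copy of $c_1$ is the obvious way to return to the same monodromy. Similarly, varying $m$ only changes the coefficients on a fixed $\Gamma$, whereas the statement asks for infinitely many $\Gamma$. Conjugating the whole configuration does vary $\Gamma$, but it keeps the isotopic pairs. (Also, one surgery per orbit at two levels only produces words $\tau_1^{a}\tau_2^{b}$, never $(\tau_1\tau_2)^6$. So you genuinely depend on the repeated surgeries of Lemma~\ref{int}, as the paper does.)

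\textbf{The missing idea} is that the chain relation has the same right-hand side for every pair of simple closed curves meeting once: $(\tau_a\tau_b)^6=\tau_\partial$, the boundary twist. The paper uses $\{c_1,c_2\}$ for the positive half. For each coprime $(p,q)$ it takes a second pair $\{c_3,c_4\}$ that generates $H_1(T^2_1)$ but has different slopes. The monodromy then changes by $(\tau_1\tau_2)^6(\tau_3\tau_4)^{-6}=\tau_\partial\tau_\partial^{-1}=\mathrm{id}$.

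\begin{itemize}
\item The four curves are pairwise non-isotopic in the fiber, so the four orbits are non-isotopic.
\item The infinitely many slopes give infinitely many $\Gamma$.
\item Every such curve can be drawn as a straight line on the torus, so it is transverse to, or a leaf of, the linear foliation $V$. It therefore has zero winding and is realized as a biLegendrian orbit in the same way as $c_1,c_2$.
\end{itemize}

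With this replacement, your Steps 1 and 3 are at the same level of rigor as the paper's argument.
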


\begin{proof}
 Construction \ref{con} shows that performing surgery along orbits isotopic to two generators $\{c_1,c_1\}$ of the homology of $T^2_1$. For every $p,q\in \mathbb{N}$ coprime we can chose another set of curves $\{c_3,c_4\}$ on $T_1^2$ that generate the homology and such that $(\tau_3\circ \tau_4)^6=\tau_3$ where $\tau_3$ is a Dehn twist around $\partial T_1^2$. In particular $(\tau_3\circ \tau_4)^{-6}=-\tau_3$ and 
$$(\tau_1\circ \tau_2)^6\circ(\tau_3\circ \tau_4)^{-6}=id$$
Therefore a surgery on the curves associated biLegendrian curves $\Gamma=\{\gamma_1,\gamma_2,\gamma_3,\gamma_4\}$
$$\mathcal{M}^{1,1}\xrightarrow {\mathcal{S}\{(\gamma_1,q_1),\cdots,(\gamma_4,q_4)\}}\mathcal{M}^{1,1}$$
where $\mathcal{M}^{1,1}$ is the geodesic flow on the unit tangent bundle of an hyperbolic surface of genus 2.
\end{proof}

\bibliography{B}
\bibliographystyle{amsplain}

\end{document}